\colorlet{mdtRed}{red!50!black}
\definecolor{dblue}{rgb}{0,0,.6}
\numberwithin{equation}{section}
\newtheorem{theorem}[equation]{Theorem}
\newtheorem{corollary}[equation]{Corollary}
\newtheorem{lemma}[equation]{Lemma}
\newtheorem{remark}[equation]{Remark}
\newtheorem{proposition}[equation]{Proposition}
\newtheorem{definition}[equation]{Definition}
\newtheorem{conjecture}[equation]{Conjecture}
\newtheorem*{theorem*}{Theorem}
\newtheorem*{corollary*}{Corollary}
\newtheorem*{proposition*}{Proposition}
\newcommand{\R}{\mathbb{R}}
\newcommand{\mf}[1]{\mathfrak{#1}}
\newcommand{\mb}[1]{\mathbb{#1}}
\newcommand{\mc}[1]{\mathcal{#1}}
\renewcommand{\t}[1]{\tilde{#1}}
\begin{document}
	
	\title{Seshadri constants and negative curves on blowups of ruled surfaces}

\author[C. J. Jacob]{Cyril J. Jacob}\address{Chennai Mathematical Institute, H1 SIPCOT IT Park, Siruseri, Kelambakkam 603103, India}
\email{cyril@cmi.ac.in}

\author[B. Khan]{Bivas Khan}
\address{Chennai Mathematical Institute, H1 SIPCOT IT Park, Siruseri, Kelambakkam 603103, India}
\email{bivaskhan10@gmail.com}

\author[R. Sebastian]{Ronnie Sebastian} 
\address{Department of Mathematics, Indian Institute of Technology Bombay, Powai, Mumbai 
	400076, Maharashtra, India}
\email{ronnie@iitb.ac.in} 

\subjclass[2020]{14C20, 14E05, 14J26}
\keywords{Seshadri constant, Hirzebruch surfaces, Ruled surfaces, Linear system of curves, Negative self-intersection curves}

\begin{abstract}
	In this article we compute Seshadri constants of ample 
	line bundles on the blowup of Hirzebruch surface 
	$\mathbb{F}_e$ at $r\leqslant e+3$ very general points. 
	Similarly, we compute Seshadri constants on the blowups of 
	certain decomposable ruled surfaces over smooth curves of 
	non-zero genus. We also prove some results related to bounded negativity 
	of blowups of Hirzebruch surfaces and ruled surfaces. 	
\end{abstract}
\maketitle

\section{Introduction}
Seshadri constants measure the local positivity of an 
ample line bundle on an algebraic variety. 
These were introduced by Demailly  in order 
to study Fujita conjecture \cite{Dem}. 
The following definition of Seshadri constants
is motivated by the ampleness criterion for line 
bundles given by Seshadri (see \cite[Theorem 7.1]{Ha70}). 

\begin{definition}
	Let $X$ be a smooth projective variety. Let $L$ 
	be a nef line bundle on $X$. 
	We define the \textit{Seshadri constant of $L$ at a point $x\in X$} 
	as
	$$\varepsilon(X,L,x):=\inf \left\{\frac{L\cdot C}{{\rm mult}_x C}~\Big| ~C \text{ is an integral curve on $X$ passing through $x$}\right\}.$$
\end{definition}

The following theorem gives an equivalent definition 
for the Seshadri constant using the nefness of certain 
line bundles on the blowup of $X$ at $x$. 
\begin{theorem} \cite[Proposition 5.1.5]{Laz2004} \label{EquivDefn}
	Let $X$ be a smooth projective variety and 
	$L$ be a nef line bundle on $X$. For a point $x\in X$, 
	let $\pi: X_x \to X$ denote the blowup of $X$ 
	at $x$ and let $E:=\pi^{-1}(\{x\})$ 
	denote the exceptional divisor. Then, we have
	$$\varepsilon(X,L,x)=\sup \{s\in \mathbb{R}_{\geqslant 0}~|~\pi^\ast L-sE \text{ is nef line bundle on $X_x$}\}.$$
\end{theorem}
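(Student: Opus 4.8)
The plan is to translate the nefness of the $\mathbb{R}$-divisor $\pi^\ast L - sE$ into the family of inequalities defining $\varepsilon(X,L,x)$, by intersecting $\pi^\ast L - sE$ against every irreducible curve on $X_x$ and identifying which curves impose binding constraints. Writing $\sigma := \sup\{s \in \mathbb{R}_{\geqslant 0} : \pi^\ast L - sE \text{ is nef}\}$, I will show $\sigma = \varepsilon(X,L,x)$ by establishing that $\pi^\ast L - sE$ is nef if and only if $s \leqslant \varepsilon(X,L,x)$.

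First I would record the two intersection identities that drive everything. For an irreducible curve $C \subset X$ with strict transform $\tilde C \subset X_x$, the projection formula gives $(\pi^\ast L)\cdot \tilde C = L\cdot C$, while the standard local computation for the blowup of a smooth point gives $E \cdot \tilde C = \mathrm{mult}_x C$. Consequently $(\pi^\ast L - sE)\cdot \tilde C = L\cdot C - s\,\mathrm{mult}_x C$.

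Next I would classify the irreducible curves $\Gamma \subset X_x$ and compute the relevant intersection in each case. If $\Gamma \subseteq E$, then $\pi^\ast L \cdot \Gamma = 0$ since $\pi(E)$ is a point, while $E \cdot \Gamma = \deg(\mathcal{O}_E(-1)|_\Gamma) < 0$ because $\mathcal{O}_{X_x}(E)|_E \cong \mathcal{O}_E(-1)$; hence $(\pi^\ast L - sE)\cdot \Gamma = -s(E\cdot\Gamma) \geqslant 0$ for every $s \geqslant 0$, so such curves never constrain $s$. If instead $\Gamma \not\subseteq E$, then $\pi$ restricts to an isomorphism of a dense open subset of $\Gamma$ onto $C \setminus \{x\}$, where $C := \pi(\Gamma)$, so $\Gamma = \tilde C$ is the strict transform. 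By the identity above, if $x \notin C$ then $(\pi^\ast L - sE)\cdot \Gamma = L\cdot C \geqslant 0$ automatically since $L$ is nef, whereas if $x \in C$ then $(\pi^\ast L - sE)\cdot \Gamma = L\cdot C - s\,\mathrm{mult}_x C$. Thus the only binding constraints come from strict transforms of integral curves through $x$.

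Combining these observations, $\pi^\ast L - sE$ is nef precisely when $L\cdot C - s\,\mathrm{mult}_x C \geqslant 0$ for every integral curve $C$ through $x$, that is, when $s \leqslant \frac{L\cdot C}{\mathrm{mult}_x C}$ for all such $C$, which is exactly $s \leqslant \varepsilon(X,L,x)$. Hence the set of admissible $s$ is the interval $[0,\varepsilon(X,L,x)]$ — closed on the right because at $s = \varepsilon(X,L,x)$ every inequality still holds, the infimum being a lower bound — and its supremum is $\varepsilon(X,L,x)$, as claimed. The point demanding care is that $s$ ranges over all real numbers, so $\pi^\ast L - sE$ must be read as an $\mathbb{R}$-divisor with nefness meaning nonnegativity against all curves; I expect the main (though still mild) obstacle to be the intersection formula $E\cdot\tilde C = \mathrm{mult}_x C$ in dimension $\geqslant 3$, which I would either cite as standard or reduce to the surface case by restricting to a general surface section through $x$.
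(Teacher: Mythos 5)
Your argument is correct and is essentially the standard proof of this equivalence as given in the cited reference \cite[Proposition 5.1.5]{Laz2004}: the paper itself offers no proof, only the citation. The two points you flag (nefness of $\mathbb{R}$-divisor classes tested against curves, and the identity $E\cdot\widetilde{C}=\operatorname{mult}_xC$ in arbitrary dimension) are exactly the standard facts invoked there, so nothing further is needed.
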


It is easy to see that $\pi^\ast L-\varepsilon(X,L,x)E$ 
is a nef line bundle on $X_x$ and hence $\varepsilon(X,L,x)\leqslant \sqrt{L^2}$.
More details about Seshadri constants and local 
positivity can be found in \cite[Section 5]{Laz2004} and \cite{primer}.

Since the definition of Seshadri constants depends 
on individual curves rather than linear or numerical 
equivalence classes, it is not easy to calculate Seshadri 
constants on varieties, even on surfaces. Explicit 
computation of Seshadri constants are given for several surfaces, for example 
simple abelian surfaces (\cite{Bauer98}), rational 
and elliptical ruled surfaces (\cite{Garcia06}), 
hyperelliptic surfaces (\cite{Han2018}), rational 
surfaces $S$ with $\text{dim} |-K_S| \geqslant 1$ (\cite{Sano14}) etc. 
For general bounds for Seshadri constants 
see \cite{Bauer99}, \cite{Naka05}, \cite{Steffens98}.

Consider the \textit{Hirzebruch surfaces} 
$\mathbb{F}_e=\mathbb{P}(\mathcal{O}_{\mathbb{P}^1}\oplus \mathcal{O}_{\mathbb{P}^1}(-e))$, 
the projectivization of the rank $2$ vector bundle 
$\mathcal{O}_{\mathbb{P}^1}\oplus \mathcal{O}_{\mathbb{P}^1}(-e)$ on $\mathbb{P}^1$ for $e\in \mathbb{Z}_{\geqslant 0}$.
Let $C_e$ denote a curve given by the vanishing 
of a global section of the line 
bundle $\mathcal{O}_{\mathbb{F}_e}(1)$ (this curve
is unique if $e>0$). Then $C_e^2=-e$. Let $f_e$ denote 
a fiber of the natural map $\pi: \mathbb{F}_e \to \mathbb{P}^1$. 
With these we can see that
$$\text{Pic }(\mathbb{F}_e) = \mathbb{Z}[C_e]\oplus\mathbb{Z}[f_e]$$ 
and intersection products on $\mathbb{F}_e$ are given 
by $C_e^2=-e,f_e^2=0$ and $C_e\cdot f_e=1$.

It is know that the minimal rational surfaces are the 
projective plane and the Hirzebruch surfaces 
$\mathbb{F}_e \, (\text{with } e \neq 1)$, that is,
any rational surface can be obtained as a blowup of 
these surfaces at finitely many points (\cite[Theorem V.10]{Beauville}). 
So in order to study the Seshadri constants on rational 
surfaces it is enough to study the blowups of these surfaces. 
The case of blowups of $\mathbb{P}^2$ has been 
extensively studied (see \cite{DKMZ16}, \cite{FHHSS20}, \cite{HH18}, \cite{Sano14}). 

In this article we focus on the blowup of Hirzebruch 
surfaces and more generally of ruled surfaces. 
Let $e\in \mb Z_{\geqslant 0}$ and let $r$ be an integer.
Let $P=\{p_1,\dots,p_r\}$ be a set of $r$ distinct points on $\mathbb{F}_e$. 
Let $\pi_P: \mathbb{F}_{e,P} \to \mathbb{F}_e$ be the 
blowup of $\mathbb{F}_e$ at the points in $P$. 
Let $E_i$ denote the exceptional divisor corresponding 
to the point $p_i\in P$. Then we have, 
$$\text{Pic }(\mathbb{F}_{e,P})=\mathbb{Z}[C_e]\oplus\mathbb{Z}[f_e]\oplus \mathbb{Z}[E_1]\oplus \dots \oplus \mathbb{Z}[E_r]\,.$$
In \cite[Theorem 3.6 and Theorem 3.8]{HJNS24}, the 
authors have computed Seshadri constants of 
an ample line bundle on the 
blowup of $\mathbb{F}_e$ at 
$r\leqslant e+1$ very general points.
An important ingredient which they use is that 
these surfaces have effective anticanonical bundle. However, this is true for 
$\mb F_e$ blown up at $r\leqslant e+5$ points 
\cite[Proposition 3]{FL2021}. 
In this article we extend the computation of 
Seshadri constants to the case $r \leqslant e+3$. 
For $r=e+2$ we have the following. 
\begin{theorem} [see Theorem \ref{SC e+2}]
	For $r=e+2$, let $P= \{p_1,\dots,p_r\}\subset \mathbb{F}_e$ be a set 
	of $r$ distinct points. Let $x \in \mathbb{F}_{e,P}$ be such 
	that $P'=P\cup \{\pi_{P}(x)\}$ be a set of $r+1$ points 
	which are in very general position.	
	Let 
	$L=\alpha C_e+\beta f_e-\sum_{i=1}^r\mu_iE_i$ be an ample line 
	bundle on $\mathbb{F}_{e,P}$. The
	Seshadri constant of $L$ at $x$ is
	$$\varepsilon(\mathbb{F}_{e,P},L,x)=
	\min \left( \alpha,\,\, \beta-\sum_{j=1}^e \mu_{i_j}, \,\,\beta+\alpha-\sum_{i=1}^r\mu_i\right),$$
	where the sum in $\beta-\sum_{j=1}^e \mu_{i_j}$ is taken over the largest $e$ $\mu_i$.
	When $e=0$ this sum is to be taken as $\beta$. 
\end{theorem}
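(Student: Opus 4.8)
The plan is to apply Theorem~\ref{EquivDefn}. Write $\pi:Y\to\mathbb{F}_{e,P}$ for the blowup at $x$, with exceptional divisor $E$. Since $\pi_P(x)\notin P$ by very general position, $Y$ is exactly the blowup $\mathbb{F}_{e,P'}$ of $\mathbb{F}_e$ at the $r+1=e+3$ very general points $P'$, and
\[
\varepsilon(\mathbb{F}_{e,P},L,x)=\sup\{\,s\geqslant 0 \mid M_s:=\pi^\ast L-sE \text{ is nef on } Y\,\}.
\]
Let $V$ denote the asserted minimum, with $v_1=\alpha$, $v_2=\beta-\sum_{j=1}^e\mu_{i_j}$ over the top-$e$ index set $J$, and $v_3=\alpha+\beta-\sum_{i=1}^r\mu_i$. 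Every irreducible curve on $Y$ is either $E$, where $M_s\cdot E=s\geqslant 0$, or the strict transform $\tilde C$ of an irreducible $C\subset\mathbb{F}_{e,P}$ with $m=\mathrm{mult}_xC$, where $M_s\cdot\tilde C=L\cdot C-sm$. So the whole task reduces to proving $L\cdot C\geqslant Vm$ for every such $C$, together with exhibiting curves attaining equality.

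For the upper bound I would produce three multiplicity-one curves through $x$. A Riemann--Roch computation gives $\dim|C_e+ef_e|=e+1$ and $\dim|C_e+(e+1)f_e|=e+3$, so there exist: the fiber through $x$ (class $f_e$); the member of $|C_e+ef_e|$ through $x$ and the $e$ points $p_i$ with largest $\mu_i$; and the member of $|C_e+(e+1)f_e|$ through $x$ and all $r=e+2$ points. Very general position makes these members irreducible and transverse to the imposed points, and for the strict transform classes $f_e$, $C_e+ef_e-\sum_{i\in J}E_i$, $C_e+(e+1)f_e-\sum_{i=1}^rE_i$ one computes $L\cdot C$ equal to $v_1,v_2,v_3$ respectively. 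Hence $\varepsilon\leqslant V$.

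The substance is the reverse inequality $L\cdot C\geqslant Vm$. Writing $[C]=aC_e+bf_e-\sum_i c_iE_i$, one has $L\cdot C=\alpha b+a(\beta-\alpha e)-\sum_i\mu_ic_i$. Intersecting $\tilde C$ with the (irreducible, by very general position) strict transforms of the three curves above and with the fibers through the $p_i$ yields, whenever $C$ is none of these, the Bezout bounds $a\geqslant m$, $c_i\leqslant a$, $b\geqslant m+\sum_{i\in J}c_i$ and $a+b\geqslant m+\sum_i c_i$; the excluded cases give $v_1,v_2,v_3$ directly. I expect the main obstacle here: these linear bounds alone leave $L\cdot C$ unbounded below in the regime $\beta<\sum_{\mathrm{top}\,(e+1)}\mu_i$, where the coefficient of $a$ at the extremal configuration becomes negative. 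The missing input is very general position itself, which forces $|aC_e+bf_e|$ to contain a member with the prescribed multiplicities, hence
\[
\dim|aC_e+bf_e|\;\geqslant\;\sum_i\binom{c_i+1}{2}+\binom{m+1}{2}-1.
\]
This quadratic inequality pins $a$ to within $O(\sqrt m)$ of $m$, and combining it with the linear Bezout bounds and running the optimization case-by-case according to which of $v_1,v_2,v_3$ realizes $V$ should give $L\cdot C\geqslant Vm$; the delicate case is precisely $\beta<\sum_{\mathrm{top}\,(e+1)}\mu_i$, where the quadratic bound must dominate the linear gain.

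Finally, to handle curves of very negative self-intersection one uses that $-K_Y$ is effective for $r+1\leqslant e+5$ (\cite[Proposition 3]{FL2021}): by adjunction any irreducible $\Gamma$ with $\Gamma^2\leqslant-3$ is a component of a fixed anticanonical divisor, hence lies in an explicit finite list (the strict transform of $C_e$ and the fibers through the points), none of which passes through the very general point $x$; for these $M_V\cdot\Gamma=L\cdot\Gamma\geqslant 0$ since $L$ is ample. The remaining irreducible curves have $\Gamma^2\geqslant-2$, so their classes are constrained by adjunction together with the dimension inequality above, reducing $M_V\cdot\Gamma\geqslant 0$ to the combinatorial check of the previous paragraph. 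Assembling these verifications gives nefness of $M_V$, and hence $\varepsilon(\mathbb{F}_{e,P},L,x)=V$.
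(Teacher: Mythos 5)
Your overall architecture matches the paper's: pass to the blowup of $\mathbb{F}_e$ at the $e+3$ very general points of $P'$, use effectivity of the anticanonical divisor to reduce nefness of $\pi^*L-sE_x$ to nonnegativity against $(-1)$-curves, $(-2)$-curves and fixed components of $-K$ (this is the content of Propositions \ref{usefulprop-1} and \ref{SC computation-1}), and exhibit the three curves of classes $f_e-E_x$, $C_e+ef_e-\sum_{j=1}^eE_{i_j}-E_x$, $C_e+(e+1)f_e-\sum_{i=1}^rE_i-E_x$ as the extremal ones. But there is a genuine gap exactly where you flag ``the main obstacle'': you never carry out the classification showing these are the \emph{only} relevant negative curves through $x$, and the tool you propose for that step is not valid. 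The inequality $\dim|aC_e+bf_e|\geqslant \sum_i\binom{c_i+1}{2}+\binom{m+1}{2}-1$ does not follow from very general position: nonemptiness of a linear system with imposed multiplicities at very general points only bounds its dimension below by the number of conditions \emph{actually} imposed, and the assertion that fat points impose independent conditions is precisely an SHGH-type non-speciality statement, not a consequence of generality. This is why the paper's Proposition \ref{general position} is deliberately restricted to multiplicity-one conditions.

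What the paper does instead --- and what your sketch is missing --- is the explicit classification in Lemmas \ref{lem1 -1}, \ref{lem r+3 C^2=-1} and \ref{-2 curves}: for a smooth rational curve $C=aC_e+bf_e-\sum_im_iE_i$ with $C^2=-1$ on the blowup at $e+3$ points in distinct fibers, the genus formula gives $\sum_im_i=2a+2b-ae-1$ and $\sum_im_i^2=2ab-a^2e+1$; Bezout against an auxiliary effective divisor in $|C_e+(e+1)f_e|$ through the remaining points (whose existence needs only the multiplicity-one statement of Proposition \ref{general position}) forces $b\in\{ae,ae+1\}$ together with $m_i\leqslant a$; and the Cauchy--Schwarz inequality $\left(\sum_im_i\right)^2\leqslant(e+3)\sum_im_i^2$ then eliminates $a\geqslant 2$, pinning the classes down to $(a,b)\in\{(0,1),(1,0),(1,e),(1,e+1)\}$, i.e.\ to your three curves; a parallel argument disposes of $(-2)$-curves, and the base-point-freeness of $C_e+(e+2)f_e-\sum_iE_i-E_x$ shows the only possible fixed anticanonical component is $\widetilde{C}_e$, which misses $x$. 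Without this (or an equivalent) argument, your claim that $L\cdot C\geqslant Vm$ in the regime $\beta<\sum_{\mathrm{top}\,(e+1)}\mu_i$ remains unproved, and the proof is incomplete.
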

For $r=e+3$ we have the following. 
See \eqref{def A(L)} for definition of $A(L)$ and $B(L)$ in the case $r=e+3$.
\begin{theorem}[see Theorem \ref{SC e+3}]
	Let $r=e+3$. With notation as above, the
	Seshadri constant of $L$ at $x$ is
	\begin{align*}
		\varepsilon(\mathbb{F}_{e,P},L,x)&=\min \left( \alpha,\,\, \beta,\,\, \beta+\alpha-\sum_{j=1}^{2}\mu_{i_j}\right)\,,\qquad \qquad \text{for $e=0$}\,,\\
		\varepsilon(\mathbb{F}_{e,P},L,x)&=\min \left( \alpha,\,\,\beta-\sum_{j=1}^e \mu_{i_j}, \,\, \beta+\alpha-\sum_{i=1}^{e+2}\mu_{i_j},\,\, A(L), \,\,B(L)\right)\,,\qquad \qquad \text{for $e>0$}\,,
	\end{align*}
	where the sum in $\beta+\alpha-\sum_{j=1}^{2}\mu_{i_j}$ is over the two largest $\mu_i$, 
	the sum in $\beta+\alpha-\sum_{j=1}^{e}\mu_{i_j}$ is over the largest $e$ $\mu_i$,
	and the sum in $\beta+\alpha-\sum_{j=1}^{e+2}\mu_{i_j}$ is over the largest $e+2$ $\mu_i$.
\end{theorem}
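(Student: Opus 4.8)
The plan is to work on the blowup $\pi\colon\widetilde X\to\mathbb F_{e,P}$ at $x$, with exceptional divisor $E$, and to invoke Theorem~\ref{EquivDefn}, which reduces the computation of $\varepsilon(\mathbb F_{e,P},L,x)$ to finding the largest $s\geqslant 0$ for which $\pi^\ast L-sE$ is nef on $\widetilde X$. The structural input I would exploit is that $\widetilde X$ is the blowup of $\mathbb F_e$ at the $e+4$ points of $P'=P\cup\{\pi_P(x)\}$, which are in very general position; since $e+4\leqslant e+5$, the anticanonical class $-K_{\widetilde X}=2C_e+(e+2)f_e-\sum_{i=1}^{e+4}E_i$ is effective by \cite[Proposition~3]{FL2021}. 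For an integral curve $C\subset\mathbb F_{e,P}$ through $x$ with $m=\mathrm{mult}_xC$, its strict transform $\widetilde C=\pi^\ast C-mE$ satisfies $(\pi^\ast L-sE)\cdot\widetilde C=L\cdot C-sm$, so the entire problem is to bound the ratios $(L\cdot C)/m$ below by the claimed minimum $s_0$ and to realize $s_0$ by an explicit curve.

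For the upper bound I would exhibit, for each term in the minimum, an integral curve through $x$ attaining that ratio. The value $\alpha$ comes from the strict transform of the fiber through $\pi_P(x)$, which meets $x$ with multiplicity one and satisfies $L\cdot f_e=\alpha$. The value $\beta-\sum_{j=1}^e\mu_{i_j}$ comes from the member of $|C_e+ef_e|$ passing through $x$ and the $e$ points with largest multiplicities; here $\dim|C_e+ef_e|=e+1$, so these $e+1$ conditions can be imposed, and $L\cdot(C_e+ef_e)=\beta$. Likewise $\beta+\alpha-\sum_{j=1}^{e+2}\mu_{i_j}$ is realized by a member of $|C_e+(e+1)f_e|$ (of dimension $e+3$) through $x$ and the $e+2$ largest points, using $L\cdot(C_e+(e+1)f_e)=\beta+\alpha$; and the two remaining terms $A(L),B(L)$ are realized by the two further curve classes recorded in \eqref{def A(L)}, which exist precisely because $r=e+3$ supplies enough points. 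In each case the very general position of $P'$ guarantees that the relevant member is reduced and irreducible with the stated multiplicity at $x$.

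For the lower bound I would prove that $\pi^\ast L-s_0E$ is nef. Ampleness of $L$ gives $s_0\leqslant\sqrt{L^2}$, so $(\pi^\ast L-s_0E)^2=L^2-s_0^2\geqslant 0$ while $(\pi^\ast L-s_0E)\cdot\pi^\ast H=L\cdot H>0$ for an ample $H$; by the Hodge index theorem and self-duality of the positive cone it then suffices to verify $(\pi^\ast L-s_0E)\cdot\widetilde C\geqslant 0$ for every irreducible curve $\widetilde C$ with $\widetilde C^2<0$. Everything thus reduces to classifying the negative curves on $\widetilde X$. Using that $-K_{\widetilde X}$ is effective, adjunction gives $\widetilde C^2=2p_a(\widetilde C)-2-K_{\widetilde X}\cdot\widetilde C$, so whenever $-K_{\widetilde X}\cdot\widetilde C\geqslant 0$ one is forced into a $(-1)$- or $(-2)$-curve, the only curve with $-K_{\widetilde X}\cdot\widetilde C<0$ being a fixed component of the anticanonical system, which for very general $P'$ is the negative section $C_e$ (note $-K_{\widetilde X}\cdot C_e=2-e$). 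The very general position of $P'$ then pins down a finite list of such classes, and for each one I would check directly that $L\cdot C\geqslant s_0m$, the binding classes reproducing exactly the terms of the minimum.

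The main obstacle is this classification step: proving that the finite list of negative curves through $x$ contains no member yielding a ratio smaller than the listed candidates. This is exactly where $r=e+3$ is genuinely harder than the case $r\leqslant e+1$ of \cite{HJNS24}, since the two extra points create new low-genus negative curves, and it is these that produce the additional quantities $A(L)$ and $B(L)$. Controlling them requires a Riemann--Roch existence/non-existence argument that uses the very general position of $P'$ to exclude curves of unexpectedly small $L$-degree, together with a case analysis of which subset of the $\mu_i$ a minimizing curve can absorb, which is what forces the sums over the \emph{largest} $\mu_i$. Once the list is shown to be complete and each entry is verified, nefness of $\pi^\ast L-s_0E$ follows and the two bounds give the stated value; for $e=0$ the surface $\mathbb F_0=\mathbb P^1\times\mathbb P^1$ is symmetric in its two rulings and no negative curves beyond the three listed occur, which is why $A(L)$ and $B(L)$ do not appear there.
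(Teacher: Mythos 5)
Your overall strategy is the same as the paper's (reduce to nefness of $\pi^\ast L-sE_x$ via effectivity of $-K$, classify the negative curves through $x$, and intersect with each class), but there are two genuine gaps. The more serious one concerns the terms $A(L)$ and $B(L)$: you assert that ``the very general position of $P'$ guarantees that the relevant member is reduced and irreducible with the stated multiplicity at $x$.'' For the exceptional classes $aC_e+aef_e-\sum(a-1)E_{i_j}-\sum aE_{i_k}-(a-1)E_x$ (and the variant with $-aE_x$) this is precisely what is \emph{not} known: Riemann--Roch shows these classes are effective (Remark \ref{existence -1 e+4 last case}), but there is no argument that they are reduced or irreducible, and without irreducibility your upper-bound step (exhibiting an integral curve attaining the ratio $A(L)$ or $B(L)$) collapses. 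The paper circumvents this with Proposition \ref{SC Computation}: one enlarges the set $\Delta$ of actual negative integral curves to a set $\Lambda$ of merely \emph{effective} divisors containing it, and observes that the supremum computed over $\Lambda$ is unchanged, because any class non-negative on $\Delta$ is already nef and hence non-negative on every effective divisor. This extra idea is needed to make the formula with $A(L)$ and $B(L)$ correct, and it is absent from your argument.

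The second gap is that the classification of negative curves, which you correctly identify as ``the main obstacle,'' is left entirely as a sketch. The complete lists of $(-1)$-curves and $(-2)$-curves on the blowup of $\mathbb F_e$ at $e+4$ points in general position (Lemmas \ref{lem r+3 C^2=-1}, \ref{C^2=-1 e+4} and \ref{-2 curves}) are the technical core of the proof; they require not just Riemann--Roch and Cauchy--Schwarz estimates on the multiplicities $m_i$, but also an induction via elementary transformations $\mathbb F_e\dashrightarrow\mathbb F_{e-1}$ to handle the case where some $m_i$ equals $a$, and it is exactly this analysis that produces the exceptional family of classes with $2\leqslant a\leqslant (e+3)/2$ underlying $A(L)$ and $B(L)$. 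A one-sentence appeal to ``a Riemann--Roch existence/non-existence argument'' does not substitute for this. Finally, a small point: for $e>0$ you should note explicitly that $C_e$ does not pass through $x$ (the points of $P'$ are chosen off $C_e$), so the potential fixed component of $|-K_{\widetilde X}|$ contributes nothing to the list of curves through $x$; as stated, your lower-bound discussion leaves the negative section in play.
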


We also compute 
Seshadri constants on the blowup of
certain ruled surfaces.
Let $\Gamma$ be a smooth curve of genus $g \geqslant 1$. 
Consider the ruled surface 
$$\pi: X=\mathbb{P}(\mathcal{O}_{\Gamma}\oplus \mathcal{L}) 
\to \Gamma\,,$$
where $\mc L$ is a line bundle on $\Gamma$ 
with ${\rm deg}(\mc L)=:-e<0$. 
Let $P=\{p_1,\dots,p_r\}\subset X$, where $r<e+2-3g$. Let 
$\pi_P: X_P\to X$ be the blowup of $X$ at the 
points in  $P$. We compute Seshadri constants at points on $X_P$ in 
Theorem \ref{SC for ruled sur}. Here again we use the fact that 
these surfaces have an effective anticanonical divisor. 
The strategy of the proofs of the above Theorems is explained in detail in 
Section \ref{preliminaries section}, before Proposition \ref{SC Computation}.

In \cite[Conjecture 4.8]{HJNS24}, the authors have proposed the following conjecture:
\begin{conjecture}\label{ConjNeg}
	Let $P=\{p_1,\dots,p_r\}\subset \mb F_e$ be $r$ 
	points in very general  position. Let $\pi_P: \mathbb{F}_{e,P} \to \mathbb{F}_e$ 
	be the blowup of $\mathbb{F}_e$ at points in $P$. If $C$ is an integral curve on 
	$\mathbb{F}_{e,P}$ such that $C^2<0$. Then $C$ is either the strict transform of 
	$C_e$ or a $(-1)$-curve.
\end{conjecture}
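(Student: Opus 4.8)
The plan is to take an integral curve $C$ with $C^2<0$ and argue that, unless $C$ is a $(-1)$-curve, it must coincide with the strict transform of $C_e$. Write $C\equiv aC_e+bf_e-\sum_{i=1}^r m_iE_i$ in the given basis of $\mathrm{Pic}(\mathbb{F}_{e,P})$. If $C=E_i$ for some $i$ we are done, so assume $C$ is the strict transform of an integral curve $\bar C\subset\mathbb{F}_e$; then $a,b\geqslant 0$ and each $m_i\geqslant 0$ is the multiplicity of $\bar C$ at $p_i$. If $a=0$, then $\bar C$ is supported on fibres and hence equals a single fibre $f_e$; since the $p_i$ are very general, a fibre meets at most one of them, so $C\equiv f_e$ or $C\equiv f_e-E_i$, and $C^2<0$ forces the latter, a $(-1)$-curve. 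Thus I may assume $a\geqslant 1$.

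Next I would extract numerical constraints from adjunction on $\mathbb{F}_{e,P}$. With $K:=K_{\mathbb{F}_{e,P}}=-2C_e-(e+2)f_e+\sum_i E_i$, integrality of $C$ gives $p_a(C)=1+\tfrac12\bigl(C^2+C\cdot K\bigr)\geqslant 0$, that is $C\cdot K\geqslant -2-C^2$. Hence, if $C$ is a negative curve that is \emph{not} a $(-1)$-curve---so either $C^2\leqslant -2$, or $C^2=-1$ with $p_a(C)\geqslant 1$---then in both cases $C\cdot K\geqslant 0$, i.e. $C\cdot(-K)\leqslant 0$. The goal is to show that this, together with $C^2<0$, forces $C\equiv C_e$.

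Here the effectivity of the anticanonical class provides the main leverage. By \cite[Proposition 3]{FL2021}, $-K=2C_e+(e+2)f_e-\sum_i E_i$ is effective when $r\leqslant e+5$. If $C\cdot(-K)<0$, then $C$ is a component of every effective anticanonical divisor, so $-K-C$ is effective; pushing forward to $\mathbb{F}_e$ shows $(2-a)C_e+(e+2-b)f_e$ is effective, whence $1\leqslant a\leqslant 2$ and $0\leqslant b\leqslant e+2$. The classes are now bounded, and a direct check---tracking the multiplicities $m_i$ at the very general points $p_i$ for this finite list of bounded-degree classes---shows that the only such integral curve with $C^2<0$, apart from the $(-1)$-curves already excluded, is $C\equiv C_e$ (with $C_e^2=-e$ and $C_e\cdot(-K)=2-e$). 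The delicate residual case is $C\cdot(-K)=0$, forced precisely when $p_a(C)=0$ and $C^2=-2$: since $-K$ need not be nef, the Hodge index theorem alone does not settle it, and one must again invoke very general position to exclude integral curves orthogonal to $-K$ with negative square.

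The crux---and the reason the statement remains a conjecture in general---is exactly this very general position input. For $a\geqslant 1$ and $C^2\leqslant -2$, integrality makes $C$ rigid, so $h^0(\mathcal{O}_{\mathbb{F}_{e,P}}(C))=1$; equivalently, on $\mathbb{F}_e$ the linear system $|aC_e+bf_e|$, of dimension $\sum_{j=0}^a\max(0,\,b-je+1)-1$, must drop by exactly $\sum_i\binom{m_i+1}{2}$ upon imposing multiplicity $m_i$ at each $p_i$. When $-K$ is effective the bound $a\leqslant 2$ keeps these classes of bounded degree, so the required independence of the multiplicity conditions is classical; for $r>e+5$, however, $a$ is no longer bounded and the independence becomes an SHGH-type interpolation statement on $\mathbb{F}_e$ that is open in general. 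I would therefore first give a complete proof in the range $r\leqslant e+5$ by the anticanonical and base-locus argument above, and isolate this interpolation problem for $r>e+5$ as the genuine obstacle to the full conjecture.
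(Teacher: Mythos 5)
First, note that the statement you are proving is stated in the paper as a \emph{conjecture}: the paper itself does not prove it, and only establishes the partial case $r\leqslant e+4$ (Theorem \ref{Conj 4.8 HJNS}), relying on the classification Lemmas \ref{lem1 -1}--\ref{-2 curves}. You correctly recognize that the general statement is open, and your structural reduction is sound and parallels the paper's Propositions \ref{usefulprop-1} and \ref{SC computation-1}: adjunction forces $C\cdot(-K)\leqslant 0$ for any negative curve that is not a $(-1)$-curve; if $C\cdot(-K)<0$ and $-K$ is effective then $C$ is a component of $-K$ and its class is bounded; and the residual case $C\cdot(-K)=0$ is exactly the case of $(-2)$-curves.

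However, there are two genuine gaps, and they sit precisely where the paper does its real work. (1) The $(-2)$-curve case is not handled at all --- you acknowledge it as ``delicate'' and say one must ``again invoke very general position,'' but give no argument. This is the hardest step: the paper's Lemma \ref{-2 curves} needs the Cauchy--Schwarz estimate $\left(\sum m_i\right)^2\leqslant r\sum m_i^2$ applied to the two adjunction identities, an auxiliary effective divisor in $|C_e+(e+1)f_e|$ through $r-1$ of the points to bound $\sum m_i$, and an induction via elementary transformations $\mb F_e\dashrightarrow\mb F_{e-1}$ to kill the multiplicities $m_i=a$; and even then the argument only closes for $r\leqslant e+4$. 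Nothing in your sketch substitutes for this. (2) In the bounded case $1\leqslant a\leqslant 2$, $b\leqslant e+2$, your ``direct check'' requires knowing that imposing multiplicities $m_i\geqslant 2$ at very general points drops $h^0$ by the expected amount; the paper's very-general-position statement (Proposition \ref{general position}) only controls \emph{passing through} points, not higher multiplicities, and the paper circumvents this by intersecting $\pi_P(C)$ with auxiliary divisors $C_e+(e+\lambda)f_e$ rather than by interpolation. Relatedly, your claimed range $r\leqslant e+5$ (coming from effectivity of $-K$) overshoots what the method actually delivers: effectivity of $-K$ is not the binding constraint --- the $(-2)$-curve exclusion is --- and the paper stops at $r\leqslant e+4$ for exactly that reason. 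As written, your argument establishes the conjecture in no new cases and does not recover the paper's partial result without substantial additional work.
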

In \cite{HJNS24-1}, 
it is shown that this conjecture implies the existence of a 
line bundle with irrational Seshadri constants.
The above conjecture is analogous to the well known $(-1)$-curves conjecture or 
Weak SHGH Conjecture for $\mathbb{P}^2$. The authors in  \cite{HJNS24}
proved that the Conjecture \ref{ConjNeg} is true when $r \leqslant e+2$. 
We have improved it for the cases $r=e+3$ and $r = e+4$.
\begin{theorem}[Theorem \ref{Conj 4.8 HJNS}]
	Let $P=\{p_1,\dots,p_r\}\subset \mb F_e$ be $r \leqslant e+4$ 
	points in very general  position. Recall $\pi_P: \mathbb{F}_{e,P} \to \mathbb{F}_e$ 
	is the blowup of $\mathbb{F}_e$ at points in $P$. Let $C$ be an integral curve on 
	$\mathbb{F}_{e,P}$ such that $C^2<0$. Then $C$ is either the strict transform of 
	$C_e$ or a $(-1)$-curve.
\end{theorem}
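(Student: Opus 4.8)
The plan is to prove the statement by induction on $r$, using as base case the range $r\le e+2$ settled in \cite{HJNS24}, and to deduce $r=e+3$ and then $r=e+4$. Suppose $C$ is an integral curve with $C^2<0$ and write $C\equiv aC_e+bf_e-\sum_{i=1}^r m_iE_i$; I abbreviate $s_1=\sum_i m_i$ and $s_2=\sum_i m_i^2$. Since any subset of very general points is again very general, the first move is to strip off the points of multiplicity zero: if $m_i=0$ for some $i$, then $C\cdot E_i=0$, so contracting $E_i$ carries $C$ isomorphically to an integral curve of the same negative self-intersection on the blowup of $\mathbb{F}_e$ at the remaining $r-1\le e+3$ very general points, where the inductive conclusion applies and lifts back unchanged. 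Hence I may assume $m_i=C\cdot E_i\ge 1$ for all $i$; in particular $C$ is neither $\widetilde{C}_e$ (which avoids every very general $p_i$, so has all multiplicities $0$) nor any $E_i$. Intersecting against $f_e$, the $E_i$, and the irreducible curve $\widetilde{C}_e\equiv C_e$ gives $a\ge 0$, $m_i\ge 0$, and $b-ea=C\cdot\widetilde{C}_e\ge 0$. If $a=C\cdot f_e=0$ then $C$ is contained in fibers, hence is a single fiber component; as very general points put at most one $p_i$ on any fiber, such a curve cannot meet $r\ge 2$ of them, contradicting $m_i\ge1$. So from now on $a\ge 1$ and $b\ge ea$.

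The positivity input is the class $M:=-K_{\mathbb{F}_{e,P}}-\widetilde{C}_e=C_e+(e+2)f_e-\sum_{i=1}^r E_i$, which I claim is nef. On $\mathbb{F}_e$ the system $|C_e+(e+2)f_e|$ is very ample of dimension $e+5$; for very general points the $r\le e+4$ simple conditions are independent, and when $r=e+4$ they cut out a pencil whose base locus is exactly $\{p_1,\dots,p_r\}$, resolved after blowing up, so $|M|$ becomes a base-point-free pencil with $M^2=0$. Thus $M$ is nef (the case $r=e+3$ being handled by an analogous argument, with even more room). Consequently, for our curve $C\neq\widetilde{C}_e$,
\[
-K\cdot C \;=\; \widetilde{C}_e\cdot C + M\cdot C \;=\; (b-ea) + M\cdot C \;\ge\; 0,
\]
that is $K\cdot C\le 0$. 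This is exactly where effectivity of $-K$ and the hypothesis $r\le e+4$ enter decisively.

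Now I read off the two possibilities from adjunction. Since $C$ is integral, $p_a(C)\ge 0$, i.e. $C^2+K\cdot C\ge -2$, and both $C^2\le -1$ and $K\cdot C\le 0$ are integers. If $K\cdot C\le -1$, then $C^2$ and $K\cdot C$ are two integers $\le -1$ with sum $\ge -2$, forcing $C^2=K\cdot C=-1$ and $p_a(C)=0$: a smooth rational curve of self-intersection $-1$, i.e. a $(-1)$-curve. If instead $K\cdot C=0$, then $2p_a(C)-2=C^2<0$ forces $p_a(C)=0$ and $C^2=-2$: a rational $(-2)$-curve. It therefore remains only to show that no such anticanonically trivial $(-2)$-curve passes through all the very general points.

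Here is where the real work, and the main obstacle, lies. Feeding $K\cdot C=0$ and $M\cdot C\ge 0$ back into the numerics (the latter reads $b+2a-s_1\ge 0$) forces $b=ea$, hence $C\equiv a(C_e+ef_e)-\sum m_iE_i$ with $s_1=a(e+2)$ and $s_2=a^2e+2$; moreover $C\cdot M=0$, so $C$ is a component of a reducible fiber of the genus-$0$ fibration defined by $|M|$. Because all $m_i\ge 1$ one has $a(e+2)=s_1\ge r$, so $a\ge 2$, and Cauchy--Schwarz gives $r\,s_2-s_1^2=\sum_{i<j}(m_i-m_j)^2\ge 0$. For $r=e+3$ this reads $-(e+4)a^2+2e+6\ge 0$, impossible for $a\ge 2$; thus the $r=e+3$ case needs no further input. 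For $r=e+4$ it reads $2e+8-4a^2\ge 0$, leaving the finitely many classes with $2\le a\le\sqrt{(e+4)/2}$. For each of these the class on $\mathbb{F}_e$ is $aH$ with $H=C_e+ef_e$, and a direct computation gives the clean coincidence
\[
h^0(\mathbb{F}_e,aH)\;=\;(a+1)\Bigl(1+\tfrac{ea}{2}\Bigr)\;=\;\tfrac{s_1+s_2}{2}\;=\;\sum_{i}\binom{m_i+1}{2},
\]
so the number of conditions imposed by the assigned fat points equals $h^0(aH)$ exactly. I would therefore finish by proving that for very general $p_i$ these fat-point conditions are independent, whence $h^0\bigl(aH-\sum m_ip_i\bigr)=0$ and no such curve exists; by upper semicontinuity of $h^0$ it suffices to exhibit one configuration imposing independent conditions. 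Establishing this non-speciality uniformly in $e$ is the crux of the argument, and the place where the bound $r\le e+4$ is sharp; everything else is bookkeeping.
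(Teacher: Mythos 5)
Your overall architecture is sound and runs parallel to the paper's: the inequality $\sum_i m_i\leqslant b+2a$, which you get from nefness of $M=-K_{\mathbb{F}_{e,P}}-\widetilde{C}_e$, is exactly the inequality the paper extracts by intersecting $\pi_P(C)$ with an auxiliary effective divisor $D\in|C_e+(e+2)f_e|$ through all the points; both routes then give $K\cdot C\leqslant 0$, and adjunction reduces everything to excluding integral $(-2)$-curves with $b=ae$, $\sum m_i=a(e+2)$, $\sum m_i^2=a^2e+2$ and $a\geqslant 2$. Up to that point your argument is correct (you should state explicitly that $m_i\leqslant a$, obtained by intersecting $C$ with the strict transform $f_e-E_i$ of the fiber through $p_i$; you use this tacitly and it is needed below).

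The genuine gap is the final step for $r=e+4$. You reduce to finitely many candidate classes and then declare that you "would finish" by proving that the fat points $m_ip_i$ impose independent conditions on $|a(C_e+ef_e)|$, so that $h^0=0$. This is not a proof, and as a strategy it is dangerous: independence of fat-point conditions at very general points is precisely the kind of statement that fails in general (e.g.\ two double points on $|\mathcal{O}_{\mathbb{P}^2}(2)}|$ fail to impose $6$ independent conditions because of the double line), and proving non-speciality "uniformly in $e$" is an SHGH-type problem, not bookkeeping. The paper avoids interpolation entirely: in Lemma \ref{-2 curves} it disposes of these classes by combining the inequality $\sum_i\frac{m_i}{a}\bigl(1-\frac{m_i}{a}\bigr)=\frac{2(a^2-1)}{a^2}$ (when all $m_i<a$) with Cauchy--Schwarz, and an elementary transformation centered at any $p_i$ with $m_i=a$ to induct down on $e$. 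In fact your residual case is empty for purely numerical reasons, so the interpolation claim is not needed at all: setting $n_i=a-m_i\geqslant 0$ (here $m_i\leqslant a$ is used), your two equations give
\begin{equation*}
\sum_{i=1}^{e+4}n_i=a(e+4)-a(e+2)=2a\,,\qquad
\sum_{i=1}^{e+4}n_i^2=(e+4)a^2-2a\cdot a(e+2)+(a^2e+2)=2\,,
\end{equation*}
and nonnegative integers with $\sum n_i^2=2$ have at most two nonzero entries, each equal to $1$, hence $\sum n_i\leqslant 2<2a$ for $a\geqslant 2$ --- a contradiction. With this (or with the paper's Lemma \ref{-2 curves}) substituted for your unproven independence claim, your proof closes.
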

In \cite[Conjecture 2.1]{HJNS24-1}, Conjecture \ref{ConjNeg} is 
extended to the blowup of any ruled surface. Let 
$\phi:X=\mb P(E)\to \Gamma$ be a ruled surface over a smooth 
curve $\Gamma$ of genus $g$,
with invariant $e>0$.
Let $P=\{p_1,\dots,p_r\}\subset X$ be $r$ distinct 
points in $X$. Let $\pi_P: X_P \to X$ 
be the blowup of $X$ at the points in $P$.
It is rather easy to see that \cite[Conjecture 2.1]{HJNS24-1} 
holds whenever $r \leqslant e$, see Corollary \ref{cor conj 2.1}.

In the final section, we have provided bounds on the self-intersections 
of curves on ruled surfaces. This is related to the celebrated Bounded 
Negativity Conjecture. We say that a surface $X$ has bounded negativity if there exists a constant $b(X)$ such that $C^2\geqslant- b(X)$ for all integral curves $C$ on $X$. In positive characteristic this conjecture 
is false, see \cite[Remark I.2.2]{Harb-notes-global}. In \cite[Conjecture 3.7.1]{BBC12}, the authors 
formulated a variant of the BNC, known as the weighted bounded negavity 
conjecture. Here the lower bound on $C^2$ is allowed to depend on 
the degree of $C$ with respect to any nef and big divisor on the surface. 
See \cite[Theorem B]{D-LP20}, \cite[Section 4]{D-LP20}, 
\cite{D-CF24} for some results related to weighted bounded 
negativity conjecture on blowups of 
$\mathbb{P}^2$, and some partial 
results for blowups of Hirzebruch surface. 
In the same spirit, we have obtained bounds for blowup of 
Hirzebruch surfaces. 
\begin{theorem}[Theorem \ref{wbnc-hirz}]
	Let $P=\{p_1,\dots,p_r\}\subset \mb F_e$ be $r$ 
	points. Recall that $\pi_P: \mathbb{F}_{e,P} \to \mathbb{F}_e$ 
	is the blowup of $\mathbb{F}_e$ at points in $P$. Let $C$ be an integral curve on 
	$\mathbb{F}_{e,P}$ such that $C^2<0$. Then either $C$ is an exceptional divisor or 
	$$C^2\geqslant {\rm min}\{-2,-e-r \}+ 
	\left(e+2-\left\lfloor \frac{r+e}{2}\right\rfloor\right)(C.f_e)\,.$$
\end{theorem}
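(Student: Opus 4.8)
The plan is to combine the adjunction inequality with the elementary positivity of curve classes on $\mathbb{F}_{e,P}$, and then reduce everything to an optimization over the multiplicities $m_i$.

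First I would write the class of $C$ in the basis of $\mathrm{Pic}(\mathbb{F}_{e,P})$ as $C \equiv a\,C_e + b\,f_e - \sum_{i=1}^r m_i E_i$, and record the two identities $C\cdot f_e = a$ and $C^2 = -ea^2 + 2ab - \sum_i m_i^2$. Since $\pi_P^{*} f_e$ and $\pi_P^{*}(C_e + e f_e)$ are nef, intersecting $C$ against them gives $a = C\cdot f_e \geqslant 0$ and $b = C\cdot\pi_P^{*}(C_e + e f_e) \geqslant 0$; and because $C$ is integral and distinct from each $E_i$, intersecting with $E_i$ gives $m_i = C\cdot E_i \geqslant 0$. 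The first genuine input is the bound $m_i \leqslant a$ for $a \geqslant 1$: the image $\overline{C} = \pi_P(C)$ is integral and, being of fiber-degree $a \geqslant 1$, contains no fiber, so $m_i = \mathrm{mult}_{p_i}\overline{C} \leqslant \overline{C}\cdot f_e = a$; more precisely, the multiplicities of those $p_i$ lying on a common fiber sum to at most $a$. The case $a = 0$ I would dispose of separately: then $C$ is the strict transform of a fiber through some $k \leqslant r$ of the points, so $C^2 = -k \geqslant -r \geqslant \min\{-2,-e-r\}$, and the claim holds since the coefficient of $C\cdot f_e$ is then irrelevant.

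For $a \geqslant 2$ I would feed in the adjunction formula $2p_a(C) - 2 = C^2 + C\cdot K_{\mathbb{F}_{e,P}}$ together with $p_a(C)\geqslant 0$. Setting $t := 2b - ea$ one checks the convenient identity $C^2 = at - \sum_i m_i^2$, while the arithmetic genus of the image curve $\overline{C}$ of class $aC_e + bf_e$ is $\tfrac12(a-1)(t-2)$. Comparing $p_a(C) = p_a(\overline{C}) - \sum_i \binom{m_i}{2} \geqslant 0$ and clearing denominators yields
\[
C^2 \ \geqslant\ 2a + \frac{\sum_{i=1}^r m_i(m_i - a)}{a-1}.
\]
The quantity $\sum_i m_i(m_i - a) = -\sum_i m_i(a - m_i)$ is then estimated by the integer AM--GM bound $m_i(a-m_i) \leqslant \lfloor a^2/4\rfloor$, refined by the fiber-wise constraint $\sum m \leqslant a$ and by the contribution of the points lying on $C_e$. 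A counting argument bounding how many of the $r$ points can simultaneously carry the extremal multiplicity is what produces the floor $\lfloor (r+e)/2\rfloor$ and turns the estimate into one linear in $a = C\cdot f_e$. The remaining value $a = 1$ I would treat by hand, since there $p_a(\overline{C}) = 0$ forces $m_i \in\{0,1\}$ and $C$ is a section (or the strict transform of $C_e$), where $C^2$ is read off directly.

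Finally I would assemble the pieces regime by regime: the linear-in-$a$ estimate from the adjunction step for $a\geqslant 1$, and the constant bound from the fiber case $a=0$, are matched against $\min\{-2,-e-r\} + \bigl(e+2 - \lfloor (r+e)/2\rfloor\bigr)(C\cdot f_e)$, the constant $\min\{-2,-e-r\}$ recording the tension between the fiber curves (giving $-e-r$) and the generic adjunction floor (giving $-2$). I expect the main obstacle to be the integer optimization of $\sum_i m_i(a - m_i)$ under the fiber-wise and section constraints: one must pin down the worst configuration of multiplicities exactly and verify that the borderline extremal curves—the strict transform of $C_e$ through the maximal number of points, and fibers through many points—meet the asserted bound. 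Extracting the precise floor and the exact coefficient of $C\cdot f_e$ from this optimization, rather than a slightly weaker non-linear estimate, is the delicate part.
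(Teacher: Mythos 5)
Your setup (writing $C\equiv aC_e+bf_e-\sum m_iE_i$, the bounds $m_i\leqslant a$ and $b\geqslant ae$, the genus formula, and the three regimes $a=0$, $a=1$, $a\geqslant 2$) agrees with the paper, and your algebraic identity $C^2\geqslant 2a+\frac{\sum_i m_i(m_i-a)}{a-1}$ for $a\geqslant 2$ is a correct repackaging of $p_a(C)\geqslant 0$. The cases $a=0$ and $a=1$ are handled essentially as in the paper. But there is a genuine gap in the heart of the argument: you propose to bound $\sum_i m_i(a-m_i)$ using ``the fiber-wise constraint,'' ``the contribution of the points lying on $C_e$,'' and ``a counting argument bounding how many of the $r$ points can simultaneously carry the extremal multiplicity,'' and you assert this is what produces $\lfloor (r+e)/2\rfloor$. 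No such counting argument exists from those inputs. The points of $P$ are completely arbitrary here: they may all lie in distinct fibers and off $C_e$, in which case the fiber-wise and $C_e$ constraints say nothing beyond $0\leqslant m_i\leqslant a$, and the only estimate left is $\sum_i m_i(a-m_i)\leqslant r\lfloor a^2/4\rfloor$. That yields a lower bound on $C^2$ whose coefficient of $a=C\cdot f_e$ is roughly $2-r/4$, independent of $e$, whereas the theorem requires the coefficient $e+2-\lfloor (r+e)/2\rfloor$, which grows with $e$. Your adjunction-only route cannot see $e$ in the slope at all, so the ``delicate optimization'' you defer to the end is not merely delicate --- it is not closable with the constraints you have listed.

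The missing idea is the paper's auxiliary divisor. By the dimension count $h^0(\mathbb F_e, C_e+\lambda f_e)=2\lambda+2-e$ (Proposition \ref{dim}), the choice $\lambda=\lfloor\frac{r+e}{2}\rfloor$ is exactly the smallest $\lambda$ with $2\lambda+2-e>r$, so there is an effective $D\in|C_e+\lambda f_e|$ passing through \emph{all} $r$ points of $P$. For $a>1$ the integral curve $\pi_P(C)$ shares no component with $D$, and B\'ezout gives the key global constraint
\begin{equation*}
\sum_{i=1}^r m_i \;\leqslant\; D\cdot\pi_P(C) \;=\; b+a(\lambda-e)\,,
\end{equation*}
which, fed into $-K_{e,P}\cdot C=2a+2b-ae-\sum m_i$ together with $b\geqslant ae$ and the genus inequality $C^2+K_{e,P}\cdot C\geqslant -2$, gives $C^2\geqslant -2+(e+2-\lambda)(C\cdot f_e)$ in three lines. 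This is where the floor function actually comes from --- a linear-system/B\'ezout argument, not an optimization over multiplicities --- and it is entirely absent from your proposal. If you want to salvage your framework, replace the AM--GM step by this inequality on $\sum m_i$; the adjunction identity you derived is then not even needed in the form you wrote it.
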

More generally for any ruled surface we show the following.
\begin{theorem}[Theorem \ref{wbnc-ruled}]
	Let $\phi:X=\mb P(E)\to \Gamma$ be a ruled surface over a smooth curve $\Gamma$ of genus $g$,
	with invariant $e$.
	Let $P=\{p_1,\dots,p_r\}\subset X$ be $r$ distinct 
	points in $X$. Let $\pi_P: X_P \to X$ 
	be the blowup of $X$ at the points in $P$. Let $C$ be an integral curve on 
	$X_P$ such that $C^2<0$. Let
	$$\lambda:={\rm max}\left\{2g-1,2g-1+e,g+\left\lfloor \frac{r+e}{2}\right\rfloor\right\}\,.$$ 
	Then either $C$ is an exceptional divisor or 
	$$C^2\geqslant {\rm min}\{-2,-r \}+ 
	\left(e+2-\lambda-2g\right)(C.f_e)\,.$$
\end{theorem}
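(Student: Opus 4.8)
The plan is to run the classical adjunction bound against a single auxiliary effective divisor produced by Riemann--Roch on $\Gamma$. Write the numerical class of $C$ as $C\equiv aC_0+bf_e-\sum_{i=1}^rm_iE_i$, where $C_0$ is the section with $C_0^2=-e$ and pullbacks to $X_P$ are suppressed. Since $C$ is integral and is neither an $E_i$ nor contained in a fibre, $a=C\cdot f_e\geqslant0$ and $m_i=C\cdot E_i\geqslant0$. From $K_{X_P}\equiv-2C_0+(2g-2-e)f_e+\sum_iE_i$ one gets $C^2=-ea^2+2ab-\sum_im_i^2$ and $C\cdot K_{X_P}=ae+2ag-2a-2b+\sum_im_i$, so the inequality $p_a(C)\geqslant0$ reads
\[
C^2\ \geqslant\ -2-C\cdot K_{X_P}\ =\ -2-ae-2ag+2a+2b-\sum_{i=1}^rm_i .
\]
Everything therefore reduces to a good lower bound for $2b-\sum_im_i$.

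I would first dispose of the fibral case $a=0$: then $C$ is the strict transform of a fibre $\cong\mb P^1$, so $C\equiv f_e-\sum_{i\in S}E_i$ with $|S|\leqslant r$, giving $C^2=-|S|\geqslant-r\geqslant\min\{-2,-r\}$ while the slope term vanishes. Assume $a\geqslant1$ and set $D=\pi_{P*}C\equiv aC_0+bf_e$, an integral curve on $X$. A direct rearrangement shows that the claimed bound, with constant $-2$, is equivalent to $C\cdot\bigl(2C_0+\lambda f_e-\sum_{i=1}^rE_i\bigr)\geqslant0$, i.e.\ to
\[
\sum_{i=1}^rm_i\ \leqslant\ D\cdot(2C_0+\lambda f_e)\ =\ 2b-2ae+\lambda a .
\]

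To obtain this I would show that the class $2C_0+\lambda f_e$ on $X$ contains a curve $\Delta$ passing through $p_1,\dots,p_r$. Pushing forward, $\phi_*\mc O_X(2C_0+\lambda f_e)=S^2E\otimes\mf d^{\lambda}$ with $\deg\mf d=1$; filtering $S^2E$ by the line subbundles arising from the normalisation $0\to\mc O\to E\to\det E\to0$ and using $h^0\geqslant\chi$ on $\Gamma$, the three entries of $\lambda=\max\{2g-1,\,2g-1+e,\,g+\lfloor(r+e)/2\rfloor\}$ appear exactly as the thresholds that force $h^0(X,2C_0+\lambda f_e)\geqslant r+1$ (the third entry) and that kill the relevant $h^1$'s so that the linear system is base-point free away from the points and $D$ is not forced into every member (the first two entries). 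For a $\Delta$ not containing $D$ one has $D\cdot\Delta\geqslant\sum_i\mathrm{mult}_{p_i}(D)\,\mathrm{mult}_{p_i}(\Delta)\geqslant\sum_im_i$, which is the desired inequality, and the adjunction bound then delivers $C^2\geqslant-2+(e+2-\lambda-2g)a$.

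The hard part will be the case where $D$ is a component of every member of $|2C_0+\lambda f_e|$ through the $p_i$. Since then $D\cdot f_e=a\leqslant2$, the numerics force $D$ to be a fibre or the negative section $C_0$; fibres are covered by the $a=0$ analysis, while the strict transform of $C_0$ is the genuinely borderline negative curve, to be separated off exactly as in Theorem \ref{Conj 4.8 HJNS} (where integral negative curves are either $(-1)$-curves or the transform of the negative section). Pinning down that no other integral curve can be such a fixed component, and checking the few remaining small-$a$ configurations directly against $\min\{-2,-r\}$, is the crux; a secondary technical point is the Riemann--Roch bookkeeping when $E$ is indecomposable, where $S^2E$ is only filtered rather than split and one must argue purely with $h^0\geqslant\chi$ and the vanishing thresholds above.
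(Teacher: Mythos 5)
Your skeleton (adjunction bound $C^2\geqslant -2-C\cdot K_{X_P}$ plus an auxiliary effective divisor through $p_1,\dots,p_r$ having no common component with $\pi_{P*}C$) is exactly the paper's strategy, and your reduction of the target inequality to $\sum_i m_i\leqslant 2b-2ae+a\lambda$ is algebraically correct. But the specific auxiliary class you pick, $2C_0+\lambda f_e$, does not work with the stated $\lambda$, and this is a genuine gap rather than a technicality. The three entries of $\lambda=\max\{2g-1,\,2g-1+e,\,g+\lfloor(r+e)/2\rfloor\}$ are calibrated for the \emph{rank-two} pushforward $\phi_*\mc O_X(C_0+\pi^*B')=E\otimes\mc O_\Gamma(B')$: the first two kill $h^1$ of the sub and quotient in $0\to\mc O_\Gamma\to E\to L\to 0$, and the third makes $\chi=2(\lambda+1-g)-e>r$. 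For your class the pushforward is the rank-three sheaf $S^2E\otimes\mc O_\Gamma(B')$, whose graded pieces have degrees $\lambda,\lambda-e,\lambda-2e$; its Euler characteristic $3(\lambda+1-g)-3e$ need not exceed $r$ (take $e$ large and $r$ small), so the existence of your divisor through all $r$ points is not guaranteed. Moreover, even granting its existence, the effectivity argument $\Delta-D\equiv(2-a)C_0+(\lambda-b)f_e$ only excludes a common component when $a\geqslant 3$, and your claim that $a\leqslant 2$ forces $D$ to be a fibre or the negative section is false: any section in $|C_0+bf_e|$ with $b\gg 0$, or any integral bisection, is a counterexample. You flag this residual case as ``the crux,'' but it is not a small remainder --- it contains all sections and bisections, which is where most of the interesting negative curves live.

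The paper avoids both problems by taking the auxiliary divisor in $|C_0+\pi^*B'|$ with $\deg B'=\lambda$ (coefficient $1$ on $C_0$, not $2$). Then $D\cdot(C_0+\lambda f_e)=b+a(\lambda-e)\geqslant\sum_i m_i$ holds for all $a>1$ (since $(1-a)C_0+(\lambda-b)f_e$ meets the nef class $f_e$ negatively), and combined with $b\geqslant ae$ this single inequality is \emph{stronger} than your target $\sum_i m_i\leqslant 2b-2ae+a\lambda$, yielding $-j\geqslant a(e+2-\lambda-2g)$ and hence the claimed bound with constant $-2$. The cases $a=1$ and $a=0$ are then disposed of directly, without any auxiliary divisor, via $C^2=2b-e-\sum_i m_i^2\geqslant(2b-e)-r\geqslant -r$ (using $m_i\leqslant 1$ and $2b-e\geqslant 0$ from Hartshorne V.2.20--2.21); this is where the $-r$ in $\min\{-2,-r\}$ comes from, and it is a step your proposal leaves entirely inside the unresolved ``hard part.'' To repair your argument you would either have to enlarge $\lambda$ to account for the slope of $S^2E$ (changing the statement) or switch to the coefficient-one divisor, at which point you have reproduced the paper's proof.
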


\noindent{\bf Notation}. We work over an 
algebraically closed field of characteristic zero. Throughout this article, to avoid notation from becoming 
overly cumbersome, we shall use the following convention. Let $X$ be a smooth projective
surface. Let $P=\{p_1,\ldots,p_r\}$ be $r$ points on $X$ and let $f:\widetilde{X}\to X$ 
denote the blowup of $X$ at $P$. If $C\subset X$ is a curve, then we shall abuse 
notation and denote $f^*C$ by $C$. The strict transform of $C$ will be denoted 
by $\widetilde{C}$. By abuse of notation, for $x\notin P$ we use $x$ 
also for $\pi_P^{-1}(\{x\})$.
We will say that a property holds at a general point of a 
variety \(X\) if it holds for a non-empty
Zariski-open subset of X. We will say that a property holds 
at a very general point if it is 
satisfied by a subset whose complement is the union of countably 
many proper closed subvarieties of $X$. We will freely switch between line bundles and divisors.

\subsection*{Acknowledgments} We thank Krishna Hanumanthu for useful discussions.
The first two authors thank IIT Bombay for its hospitality. 
They are also partially supported by a grant from the Infosys Foundation.

\section{Preliminaries}\label{preliminaries section}
Let $X$ be a smooth projective surface. 
The following result is well known, but we include a proof 
for the benefit of the authors and the readers. Let $X^r$ denote the 
$r$-fold product of $X$ with itself.

\begin{proposition}\label{general position}
	Let the Picard group of $X$ be countable. 
	There is a subset $T\subset X^r$ which has the following two properties:
	\begin{itemize}
		\item It is the complement of a countable union of proper closed subsets of $X^r$.
		\item Let $(p_1,\ldots, p_r)\in T$. If $D$ is a curve
		which contains all these points, then $r<h^0(X, D)$. 
	\end{itemize}
\end{proposition}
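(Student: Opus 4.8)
The plan is to stratify the ``bad'' tuples according to the linear equivalence class of the curve that witnesses the bad behavior, and to control each stratum by a dimension count on an incidence variety. Concretely, for each line bundle $L\in \mathrm{Pic}(X)$ with $1\leqslant h^0(X,L)\leqslant r$, I would form the incidence correspondence
\[
I_L:=\{(D,p_1,\ldots,p_r)\in |L|\times X^r \mid p_i\in D \text{ for all } i\},
\]
where $|L|=\mb P(H^0(X,L))$ is the complete linear system, a projective space of dimension $h^0(X,L)-1$. Cut out by the universal divisor over $|L|\times X$, this is a closed subvariety of the projective variety $|L|\times X^r$, hence itself projective.

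Next I would estimate $\dim I_L$ using the first projection $\mathrm{pr}_1\colon I_L\to |L|$. Over a divisor $D\in |L|$ the fiber is $\mathrm{supp}(D)^r$, the $r$-fold product of the support of $D$; since every member of $|L|$ is a nonzero effective divisor, $\mathrm{supp}(D)$ is a curve of dimension one, so each fiber has dimension exactly $r$. Applying the fiber-dimension bound to each irreducible component gives $\dim I_L\leqslant (h^0(X,L)-1)+r$. Because $X$ is a surface, $\dim X^r=2r$, and the hypothesis $h^0(X,L)\leqslant r$ yields
\[
\dim I_L\leqslant (h^0(X,L)-1)+r\leqslant 2r-1<2r=\dim X^r.
\]
As $I_L$ is projective, its image $Z_L:=\mathrm{pr}_2(I_L)\subset X^r$ under the second projection is closed, and by the displayed bound it is a \emph{proper} closed subset.

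I would then set $T:=X^r\setminus \bigcup_L Z_L$, the union ranging over the classes $L$ with $1\leqslant h^0(X,L)\leqslant r$; there are only countably many such $L$ since $\mathrm{Pic}(X)$ is countable. By construction $T$ is the complement of a countable union of proper closed subsets, which is the first asserted property. For the second property, suppose $(p_1,\ldots,p_r)\in T$ and $D$ is a curve through all the $p_i$; put $L:=\mathcal{O}_X(D)$, so that $h^0(X,L)\geqslant 1$. If we had $h^0(X,L)\leqslant r$, then $D$ would exhibit $(p_1,\ldots,p_r)\in Z_L$, contradicting $(p_1,\ldots,p_r)\in T$. Hence $r<h^0(X,L)=h^0(X,D)$, as required.

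The only point requiring care is the fiber-dimension bound for $\mathrm{pr}_1$: one must note that every member of $|L|$ is a nonzero effective divisor, so its support is genuinely one-dimensional and the fiber is exactly $\mathrm{supp}(D)^r$ of dimension $r$, rather than something larger. This is precisely where the normalization $h^0(X,L)\geqslant 1$ is used. I do not anticipate any serious obstacle: the argument is a clean incidence-variety dimension count, and the surface hypothesis $\dim X=2$ is exactly what places the threshold at $h^0(X,L)\leqslant r$, matching the strict inequality $r<h^0(X,D)$ in the conclusion.
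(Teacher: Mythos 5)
Your proof is correct. It shares with the paper the overall architecture forced by the statement -- stratify the bad tuples by the linear equivalence class of the witnessing curve, use countability of $\mathrm{Pic}(X)$ to get a countable union, and close with the same contradiction argument -- but the mechanism for showing that each $Z_L$ is a \emph{proper closed} subset of $X^r$ is genuinely different. The paper works sheaf-theoretically: it considers the evaluation map $\psi\colon H^0(X,L)\otimes\mc O_{X^r}\to\bigoplus_i q_i^*L$, exhibits one tuple $(p_1,\ldots,p_r)$ at which the fibrewise map $s\mapsto(s\vert_{p_1},\ldots,s\vert_{p_r})$ is injective (by choosing points whose images span $\mb P(H^0(X,L))$, which is where $h^0\leqslant r$ enters), and then invokes local freeness of the cokernel on an open set to conclude that injectivity persists on a dense open $U_L$; its $Z_L$ is the complement of $U_L$. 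You instead bound the dimension of the incidence correspondence $I_L\subset|L|\times X^r$ via its projection to $|L|$, getting $\dim I_L\leqslant(h^0(X,L)-1)+r\leqslant 2r-1<2r$, and take $Z_L$ to be the (automatically closed, by properness of $I_L$) image in $X^r$. Your route is more elementary and avoids having to construct a single good tuple -- the dimension count shows the bad locus is small without ever producing a witness of goodness -- at the cost of needing the fibre-dimension theorem component by component; the paper's route gives marginally more (an explicit open set on which the evaluation map is injective with locally free cokernel, which is the kind of statement that generalizes to higher-order jets), but for the proposition as stated the two arguments deliver the same conclusion. Both place the threshold at $h^0(X,L)\leqslant r$ for the same underlying reason: $r$ point conditions can kill at most an $r$-dimensional space of sections.
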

\begin{proof}
	Let us fix the integer $r\geqslant 1$.  Let $L$ be a line
	bundle on $X$ such that $0<h^0(X, L)\leqslant r$. We begin with defining a closed subset 
	$Z_L\subset X^r$. Let $q_i:X^r\to X$ denote projection to the $i$th factor. 
	Consider the map of sheaves on $X^r$
	$$\psi:H^0(X,L)\otimes \mc O_{X^r}\to \bigoplus_{i=1}^rq_i^*L\,,$$
	which is defined as follows. The map $H^0(X,L)\otimes \mc O_{X^r}\to q_i^*L$
	is obtained by pulling back the canonical map $H^0(X,L)\otimes \mc O_{X}\to L$
	on $X$ along $q_i$. If $s\in H^0(X,L)$ is a section, then the restriction of the above
	map over the point $(p_1,\ldots,p_r)\in X^r$ is given by 
	$$s\mapsto (s\vert_{p_1},\ldots, s\vert_{p_r})\,.$$
	
	Recall the following basic fact. 
	Let $Y$ be an integral scheme, $\mc F$, $\mc G$ be locally free coherent sheaves on $Y$,
	and $\phi:\mc F\to \mc G$ be a morphism between these. Suppose there is a point $y\in Y$ such 
	that the restriction 
	$\phi\vert_y:\mc F\otimes \mc O_Y/{\mf m_y}\to \mc G \otimes \mc O_Y/{\mf m_y}$ 
	is an inclusion. Then the map $\phi$ is an inclusion and there is an open neighbourhood 
	$U$ of $y$ such that the cokernel is locally free on $U$. 
	
	If $h^0(X,L)=1$, then the morphism $\psi$ being nonzero is an inclusion. If $h^0(X,L)>1$,
	then there is an open subset $V\subset X$ and a morphism $V\to \mb P(H^0(X,L))$.  
	Let $(p_1,\ldots,p_r)\in V^r$ be such that the linear subspace generated by the images of $p_i$
	is all of $\mb P(H^0(X,L))$. Note that for this to happen it is necessary that 
	$h^0(X,L)\leqslant r$. We claim that the map
	$$s\mapsto (s\vert_{p_1},\ldots, s\vert_{p_r})\,$$
	is an inclusion. If not, there is a nonzero section of $L$ which vanishes at all these 
	points, that is, a hyperplane in $\mb P(H^0(X,L))$ which contains all the $p_i$,
	which is not possible. Thus, using the fact mentioned in the preceding para,
	it follows that the map $\psi$ is an inclusion. Further,
	there is an open subset $U_L\subset X^r$ such that the cokernel of $\psi$ is locally 
	free over $U_L$. If $P=(p_1,\ldots,p_r)\in U_L$, then clearly the map 
	$\psi\vert_{P}$ is an inclusion. Let $Z_L:=X^r\setminus U_L$. Let $Z$ be the union 
	of $Z_L$ over all line bundles $L$ with $0<h^0(X,L)\leqslant r$. 
	We take $T$ to be $X^r\setminus Z$. 
	
	We claim that $T$ has the required property. Let $P=(p_1,\ldots,p_r)\in T$. 
	Let $D$ be a curve which contains the 
	points $p_i$. Let $L=\mc O_X(D)$. Then $h^0(X,L)>0$. Assume that $h^0(X,L)\leqslant r$.
	The point $P=(p_1,\ldots,p_r)\in U_L$. As we observed above, the map $\psi\vert_P$ is an 
	inclusion. The curve $D$ is the vanishing locus of a nonzero section $s\in H^0(X,L)$
	which passes through the points $p_i$. This shows that $\psi\vert_P(s)=0$, 
	a contradiction. 
\end{proof}

Denote 
$NS(X)_\R:=NS(X)\otimes_{\mathbb Z} \R$,
where $NS(X)$ denotes the N\'eron-Severi group of $X$. 
For $m\in \{1,2\}$, a $(-m)$-curve will mean a smooth irreducible rational curve 
$C$ such that $C^2=-m$. 

\begin{proposition}\label{usefulprop-1}
	Let $X$ be a smooth projective surface such that the canonical divisor $K_X$
	satisfies $H^0(X,-K_X)\neq 0$. Let $L\in NS(X)_\R$, 
	be such that $L^2\geqslant 0$
	and $L.H> 0$ for some ample divisor $H$. 
	Assume that 
	\begin{enumerate}
		\item $L \cdot C\geqslant 0$ for all $(-1)$ and $(-2)$-curves,
		\item $L \cdot C\geqslant 0$ for every integral curve $C$ in the base 
		locus of $-K_X$ with $C^2<0$. 
	\end{enumerate}
	Then $L$ is nef. 
\end{proposition}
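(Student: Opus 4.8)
The plan is to verify nefness straight from its characterization: $L$ is nef if and only if $L\cdot C\geqslant 0$ for every integral curve $C$ on $X$. So I would fix an arbitrary integral curve $C$ and prove $L\cdot C\geqslant 0$, splitting the argument according to the sign of $C^2$. The case $C^2\geqslant 0$ is handled by the Hodge index theorem, while the case $C^2<0$ is where the two hypotheses, together with the effectivity of $-K_X$, do the real work.

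For the case $C^2\geqslant 0$: since $C$ is an integral curve and $H$ is ample we have $C\cdot H>0$, so both $C$ and $L$ lie in the closed positive cone $\{D\in NS(X)_\R : D^2\geqslant 0,\ D\cdot H\geqslant 0\}$ (recall $L\cdot H>0$ forces $L\neq 0$, so no degenerate subcase arises). By the Hodge index theorem the intersection form on $NS(X)_\R$ has signature $(1,\rho-1)$, and a standard consequence is that any two classes in this closed cone pair non-negatively (the Minkowski-space fact that two future-directed causal vectors have non-negative product). Hence $L\cdot C\geqslant 0$, and this disposes of all curves with non-negative self-intersection.

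The substance is in the case $C^2<0$, and here I would use $H^0(X,-K_X)\neq 0$ through the base locus of $|-K_X|$. If $C$ is contained in that base locus, then $C$ is an integral curve in the base locus of $-K_X$ with $C^2<0$, so hypothesis (2) gives $L\cdot C\geqslant 0$ directly. Otherwise there is an effective divisor $D\in|-K_X|$ with $C\not\subset\operatorname{Supp}D$, whence $-K_X\cdot C=D\cdot C\geqslant 0$, that is $K_X\cdot C\leqslant 0$. Feeding this into adjunction, $C^2+K_X\cdot C=2p_a(C)-2$ with $p_a(C)\geqslant 0$, gives $C^2=2p_a(C)-2-K_X\cdot C\geqslant -2$, so $C^2\in\{-1,-2\}$. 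Reading off adjunction in each subcase forces $p_a(C)=0$ (hence $C$ is smooth rational, as $C$ is integral) with $K_X\cdot C=-1$ when $C^2=-1$ and $K_X\cdot C=0$ when $C^2=-2$; that is, $C$ is a $(-1)$-curve or a $(-2)$-curve, and hypothesis (1) yields $L\cdot C\geqslant 0$.

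Combining the two cases shows $L\cdot C\geqslant 0$ for every integral curve $C$, so $L$ is nef. I do not expect a serious obstacle here; the proof is essentially bookkeeping once the right trichotomy is in place. The crux is the observation in the $C^2<0$ case: an integral curve of negative self-intersection is either swallowed by the base locus of $|-K_X|$, or else the effective anticanonical divisor bounds $K_X\cdot C$ from above and adjunction pins $C$ down to a $(-1)$- or $(-2)$-curve. The Hodge-index step and the reduction to checking integral curves are routine, and the only point to state carefully is that $C\not\subset\operatorname{Supp}D$ is exactly what guarantees $D\cdot C\geqslant 0$ for the chosen effective anticanonical $D$.
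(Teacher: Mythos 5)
Your proof is correct and follows essentially the same route as the paper: Hodge index disposes of curves with $C^2\geqslant 0$, and for $C^2<0$ the trichotomy (base locus of $-K_X$ versus $(-1)$- or $(-2)$-curve via adjunction) is exactly the paper's argument. The only cosmetic difference is in the first case, where you invoke the light-cone consequence of the signature theorem directly, while the paper derives the same contradiction by noting $C$ is nef, making $C+\alpha_0 H$ ample with $L\cdot(C+\alpha_0 H)=0$, and then applying Hodge index.
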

\begin{proof}
	To show that $L$ is nef, it suffices to show that for every integral curve $C$,
	we have $L \cdot C\geqslant 0$. Let $C$ be an integral curve such that $C^2\geqslant 0$. 
	If possible, let $L \cdot C<0$. Since $C^2\geqslant 0$, it follows that $C$ is nef. 
	Thus, $C+\alpha H$ is ample for $\alpha>0$. If we take $\alpha_0=-(L \cdot C)/(L \cdot H)>0$
	then we get $L \cdot (C+\alpha_0 H)=0$. Since $C+\alpha_0 H$ is ample, from the Hodge
	Index Theorem it follows that $L^2<0$ or $L=0$. This gives a contradiction. 
	Thus, if $C^2\geqslant 0$ then $L \cdot C\geqslant 0$. 
	
	Next, consider the case when $C$ is an integral curve with $C^2<0$. 
	If $C$ is in the base locus of $-K_X$, then by assumption we have $L\cdot C\geqslant 0$. 
	If $C$ is not in the base locus of $-K_X$, then $-K_X \cdot C\geqslant 0$, that is, 
	$K_X \cdot C\leqslant0$. The genus formula, $g(C)=1+C\cdot (C+K_X)/2$, shows that 
	the genus is forced 
	to be 0, that is, $C$ is a smooth rational curve. Moreover, it follows easily 
	that $C$ is either a $(-1)$-curve or a $(-2)$-curve. 
	By our assumption, it follows 
	that $L \cdot C\geqslant 0$. This completes the proof of the Proposition. 
\end{proof}

Let $x\in X$ be a point. Let $L$ 
be an ample line bundle on $X$. Let $\pi:\widetilde X\to X$ denote the blowup
of $X$ at the point $x$. Let $E_x$ denote the exceptional divisor. 
Recall that the Seshadri constant of $L$ at $x$ is defined as 
$$\varepsilon(X,L,x):={\rm sup}\{s\geqslant 0\,\vert\, \pi^*L-sE_x \text{ is nef }\}\,.$$
\begin{definition}\label{defDelta}
	Let $\Delta$ denote the set of numerical classes of integral curves 
	$\widetilde{C}\subset \widetilde{X}$, such that 
	$\widetilde{C}$ is the strict transform of an integral curve $C\subset X$
	which contains $x$, and $\widetilde{C}$ is:
	\begin{itemize}
		\item a $(-1)$-curve, or
		\item a $(-2)$-curve, or
		\item in the base locus of $-K_{\widetilde{X}}$ and $\widetilde{C}^2<0$.
	\end{itemize}
\end{definition}	
\begin{proposition}\label{SC computation-1}
	With the notation as above, assume that the canonical divisor $K_{\widetilde X}$
	satisfies $$H^0(\widetilde X,-K_{\widetilde X})\neq 0\,.$$ Then
	$$\varepsilon(X,L,x)=\sup\{s\geqslant 0\,|\,(\pi^\ast L-sE_x)\cdot \widetilde C 
	\geqslant 0 \text{ for all }\widetilde C\in \Delta\}\,.$$
\end{proposition}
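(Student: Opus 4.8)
The plan is to combine the two tools already in hand: Theorem \ref{EquivDefn}, which identifies $\varepsilon(X,L,x)$ with the supremum of those $s\ge 0$ for which $L_s:=\pi^*L-sE_x$ is nef, and Proposition \ref{usefulprop-1}, whose nefness criterion requires positivity only against $(-1)$-curves, $(-2)$-curves, and negative curves in the base locus of $-K_{\widetilde X}$. Writing $M:=\sup\{s\ge 0 : L_s\cdot\widetilde C\ge 0 \text{ for all } \widetilde C\in\Delta\}$ for the right-hand side, the goal is to show $\varepsilon(X,L,x)=M$.

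One inequality is immediate: if $L_s$ is nef then $L_s\cdot\widetilde C\ge 0$ for every integral curve of $\widetilde X$, in particular for every class in $\Delta$. Hence $\{s : L_s \text{ nef}\}\subseteq\{s : L_s\cdot\widetilde C\ge 0\ \forall\,\widetilde C\in\Delta\}$, and taking suprema gives $\varepsilon(X,L,x)\le M$.

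For the reverse inequality I would fix $s$ with $0\le s\le\sqrt{L^2}$ satisfying the $\Delta$-condition and show $L_s$ is nef by checking the hypotheses of Proposition \ref{usefulprop-1}. The numerical hypotheses hold: $L_s^2=L^2-s^2\ge 0$ since $s\le\sqrt{L^2}$, and $L_s\cdot\widetilde H>0$ for $\widetilde H=\pi^*A-\delta E_x$ with $A$ ample on $X$ and $\delta>0$ small enough that $\widetilde H$ is ample, since then $L_s\cdot\widetilde H=L\cdot A-s\delta>0$. The core is the case analysis: any integral $\widetilde D\subset\widetilde X$ that is a $(-1)$-curve, a $(-2)$-curve, or a negative curve in the base locus of $-K_{\widetilde X}$ is either (i) $E_x$ itself, where $L_s\cdot E_x=s\ge 0$; or (ii) the strict transform of an integral curve $D\subset X$ with $x\notin D$, where $\widetilde D=\pi^*D$ and $L_s\cdot\widetilde D=L\cdot D>0$ by ampleness of $L$; or (iii) the strict transform of an integral $D\subset X$ with $x\in D$, in which case $\widetilde D$ is by definition a class of $\Delta$, so $L_s\cdot\widetilde D\ge 0$ by hypothesis. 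Proposition \ref{usefulprop-1} then gives that $L_s$ is nef, so $s\le\varepsilon(X,L,x)$; taking $s$ arbitrarily close to $M$ (while remaining $\le\sqrt{L^2}$) yields $M\le\varepsilon(X,L,x)$.

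The step requiring most care is reconciling the supremum with the constraint $s\le\sqrt{L^2}$ under which Proposition \ref{usefulprop-1} applies: for the reverse inequality one must know that the range of $s$ accessible to the criterion actually exhausts $M$. This is where the a priori bound $\varepsilon(X,L,x)\le\sqrt{L^2}$ enters, together with the fact that each $\widetilde C\in\Delta$ has $\widetilde C\cdot E_x>0$, so that $s\mapsto L_s\cdot\widetilde C$ is strictly decreasing and the defining set for $M$ is a closed interval $[0,M]$. The conceptual heart, which the case analysis makes precise, is that ampleness of $L$ already forces $L_s$ to be positive on every curve avoiding $x$, so the only classes that can obstruct nefness of $L_s$ are strict transforms of curves through $x$—exactly the classes collected in $\Delta$.
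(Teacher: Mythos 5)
Your proof is correct and follows essentially the same route as the paper: the easy inequality from nefness, then the reverse inequality by feeding $L_s=\pi^*L-sE_x$ (for $0\leqslant s\leqslant\sqrt{L^2}$) into Proposition \ref{usefulprop-1}, with the same trichotomy for the relevant curves ($E_x$ itself, strict transforms of curves avoiding $x$, and strict transforms of curves through $x$, the last being exactly the classes in $\Delta$). The only cosmetic difference is your choice of ample class $\pi^*A-\delta E_x$ versus the paper's $\alpha H-E_x$ with $\alpha$ large, and you are in fact more explicit than the paper about why the set defining the right-hand supremum is an interval.
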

\begin{proof}
	If we let $L'_s=\pi^*L-sE_x$, then if $L'_s$ is nef, we have $L'^2_s\geqslant 0$. 
	This shows that $\varepsilon(X,L,x)\leqslant \sqrt{L^2}$. Thus, we have 
	$$\varepsilon(X,L,x)={\rm sup}
	\{\sqrt{L^2} \geqslant s\geqslant 0\,\vert\, \pi^*L-sE_x \text{ is nef }\}\,.$$
	If $H$ is an ample divisor on $X$, then for $\alpha$ very large, we have $\alpha H-E_x$ 
	is ample on $\widetilde{X}$. For $0\leqslant s\leqslant \sqrt{L^2} $ and $\alpha$ very large, we have 
	$$(\pi^*L-sE_x)  \cdot (\alpha H-E_x)=\alpha L \cdot H-s\geqslant \alpha L \cdot H-\sqrt{L^2}>0\,.$$
	Thus, for every $0\leqslant s\leqslant \sqrt{L^2}$, the 
	element $L'_s\in NS(\widetilde{X})_\R$ satisfies the 
	conditions $L'^2_s\geqslant 0$ and $L'_s\cdot (\alpha H-E_x)>0$. 
	We claim that if $L'_s\cdot \widetilde C\geqslant 0$ for all $\widetilde C\in \Delta$,
	then $L'_s$ is nef. 
	As the anticanonical divisor $-K_{\widetilde{X}}$ of $\widetilde{X}$ 
	is effective, we may apply Proposition \ref{usefulprop-1}.
	If $\widetilde C\subset \widetilde{X}$ is such an integral curve, 
	and $x\notin \pi(\widetilde C)$, then clearly 
	$L'_s\cdot \widetilde C\geqslant 0$. If $\pi(\widetilde C)=\{x\}$ is a point, 
	then clearly $\widetilde C=E_x$ and $L'_s\cdot \widetilde C\geqslant 0$. 
	We easily conclude that $L'_s$ is nef iff $L'_s\cdot \widetilde C\geqslant 0$ 
	for all $\widetilde C\in \Delta$.
\end{proof}

This brings us to the strategy we employ in this article. 
The main idea is contained in \cite[Proposition 4.1]{Sano14}
and is also used in \cite{HJNS24}.
Let $S$ be a smooth 
projective surface and let $X$ be the blowup of $S$ at $r$ very general points. 
Let $x\in X$ be a very general point and let $\widetilde X$ be the blowup of $X$
at $x$. We will be interested in situations in which 
$H^0(\widetilde X,-K_{\widetilde{X}})\neq 0$. In the situations we consider, 
the set $\Delta$ will turn out 
to be a finite list. Then the Seshadri 
constant $\varepsilon(X,L,x)$ is the largest $s$ such that 
$(\pi^*L-sE_x).\widetilde C\geqslant 0$ for all $\widetilde C\in \Delta$.
In some situations we will need the following variant of Propostion \ref{SC computation-1}.

\begin{proposition}\label{SC Computation}
	Let $X$ and $\widetilde{X}$ be as above such that 
	$H^0(\widetilde{X},-K_{\widetilde{X}})\neq 0$.
	Let $\Lambda$ be a set of effective divisors in $\widetilde{X}$ such that 
	$\Delta\subset \Lambda$. Then for any ample line bundle $L$ on $X$,
	$$\varepsilon(X,L,x)=\sup\{s\geqslant 0\,|\,(\pi^\ast L-sE_x)\cdot C' 
	\geqslant 0 \text{ for all }C'\in \Lambda\}\,.$$
	
\end{proposition}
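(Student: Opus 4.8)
The plan is to establish two inequalities, using Proposition \ref{SC computation-1} as the starting point. Write
$$s_\Delta := \sup\{s\geqslant 0\,|\,(\pi^\ast L-sE_x)\cdot \widetilde C \geqslant 0 \text{ for all }\widetilde C\in \Delta\}\,,\qquad s_\Lambda := \sup\{s\geqslant 0\,|\,(\pi^\ast L-sE_x)\cdot C' \geqslant 0 \text{ for all }C'\in \Lambda\}\,,$$
so that Proposition \ref{SC computation-1} gives $\varepsilon(X,L,x)=s_\Delta$, and the goal is to prove $s_\Lambda=\varepsilon(X,L,x)$. Both suprema are taken over non-empty sets, since $s=0$ lies in each: indeed $\pi^\ast L$ is nef, being the pullback of an ample bundle, so it pairs nonnegatively with every class appearing in either condition.

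For the inequality $s_\Lambda \leqslant \varepsilon(X,L,x)$, I would simply observe that $\Delta \subseteq \Lambda$, so the defining condition for $s_\Lambda$ imposes nonnegativity against a larger collection of classes than the condition for $s_\Delta$. Hence every $s$ admissible for $\Lambda$ is admissible for $\Delta$, giving $s_\Lambda \leqslant s_\Delta = \varepsilon(X,L,x)$.

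For the reverse inequality $\varepsilon(X,L,x) \leqslant s_\Lambda$, the key input is that the nef cone is closed, so that the set of $s\geqslant 0$ for which $\pi^\ast L - sE_x$ is nef is the closed interval $[0,\varepsilon(X,L,x)]$; in particular the class $\pi^\ast L - \varepsilon(X,L,x)E_x$ is itself nef, as already noted after Theorem \ref{EquivDefn}. A nef class pairs nonnegatively with every effective divisor (writing such a divisor as a nonnegative combination of its irreducible components and using nefness on each component), and by hypothesis every element of $\Lambda$ is effective. Therefore $s=\varepsilon(X,L,x)$ satisfies the condition defining $s_\Lambda$, whence $\varepsilon(X,L,x)\leqslant s_\Lambda$. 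Combining the two inequalities yields the claimed equality.

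The argument is genuinely short; the only point requiring care is the reverse inequality, where it is essential that $\Lambda$ consists of \emph{effective} divisors rather than arbitrary numerical classes. This is exactly what allows one to test the already-nef class $\pi^\ast L - \varepsilon(X,L,x)E_x$ against the enlarged set $\Lambda$ and conclude nonnegativity of all the relevant intersection numbers. The usefulness of the statement in applications is that $\Lambda$ can be taken to be a convenient explicit family of effective divisors containing $\Delta$, which may be easier to enumerate or bound than $\Delta$ itself, without any loss in computing the Seshadri constant.
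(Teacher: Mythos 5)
Your proof is correct and follows essentially the same route as the paper: the easy inequality from $\Delta\subset\Lambda$, and the reverse inequality by observing that the relevant class is nef and hence pairs nonnegatively with the effective divisors in $\Lambda$. The only cosmetic difference is that you invoke nefness at the single value $s=\varepsilon(X,L,x)$ via closedness of the nef cone, whereas the paper applies Proposition \ref{usefulprop-1} to every $s$ admissible for $\Delta$; both are valid.
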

\begin{proof}
	From Proposition \ref{SC computation-1}, we have
	$$\varepsilon(X,L,x)=\sup\{s\geqslant 0\,|\,(\pi^\ast L-sE_x)\cdot \widetilde C
	\geqslant 0 \text{ for all }\widetilde C\in \Delta\}\,.$$
	Let 
	$$\varepsilon_1=\sup\{s\geqslant 0\,|\,(\pi^\ast L-sE_x)\cdot C' 
	\geqslant 0 \text{ for all }C'\in \Lambda\}\,.$$ 
	Our goal is to show $\varepsilon_1=\varepsilon(X,L,x)$.
	
	Since $\Delta \subset \Lambda$, it is easy to see that 
	$\varepsilon_1\leqslant\varepsilon(X,L,x)$. Now consider a class 
	$\pi^\ast L-sE_x$ such that  
	$(\pi^\ast L-sE_x)\cdot \widetilde C \geqslant 0$ for all $\widetilde C\in \Delta$. 
	By Proposition \ref{usefulprop-1}, 
	$\pi^\ast L-sE_x$ is nef, so for every 
	effective divisor $C'$ we have $(\pi^\ast L-sE_x)\cdot C'\geqslant 0$. 
	In particular we have 
	$(\pi^\ast L-sE_x)\cdot C' \geqslant 0 \text{ for all }C'\in \Lambda$. 
	Hence $\varepsilon(X,L,x)\leqslant \varepsilon_1$.
\end{proof}

Let $e\geqslant 0$ be an integer and let 
\begin{equation}\label{def-pi}
	\pi:\mathbb F_e\to \mb P^1
\end{equation}
denote the Hirzebruch surface $\mb F_e=\mb P(\mc O_{\mb P^1}\oplus \mc O_{\mb P^1}(-e))$.

\begin{lemma}\label{pfw_phi}
	For \(a \geqslant 0\), we have
	$$\pi_\ast(aC_e)=\mathcal{O}_{\mathbb{P}^1}\oplus\mathcal{O}_{\mathbb{P}^1}(-e)\oplus\mathcal{O}_{\mathbb{P}^1}(-2e)\oplus\dots \oplus \mathcal{O}_{\mathbb{P}^1}(-ae).$$
\end{lemma}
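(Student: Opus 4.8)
The plan is to compute the pushforward $\pi_\ast(aC_e)$ directly, using the structure of $\mathbb{F}_e = \mathbb{P}(\mathcal{O}_{\mathbb{P}^1}\oplus\mathcal{O}_{\mathbb{P}^1}(-e))$ as a projective bundle. First I would recall the relationship between $\mathcal{O}_{\mathbb{F}_e}(C_e) = \mathcal{O}_{\mathbb{F}_e}(1)$ and the tautological bundle. The key input is the projection formula combined with the standard fact that for $\mathbb{P}(\mathcal{E})$ with $\mathcal{E}$ a rank-$2$ bundle on $\mathbb{P}^1$, the pushforward of the $a$-th power of the relative $\mathcal{O}(1)$ is $\operatorname{Sym}^a(\mathcal{E})$ (up to a twist by a line bundle depending on the normalization convention). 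Concretely, the identity $\pi_\ast \mathcal{O}_{\mathbb{P}(\mathcal{E})}(a) = \operatorname{Sym}^a(\mathcal{E})$ holds for $a \geqslant 0$.

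**Main computation.**
With $\mathcal{E} = \mathcal{O}_{\mathbb{P}^1}\oplus\mathcal{O}_{\mathbb{P}^1}(-e)$, I would compute $\operatorname{Sym}^a(\mathcal{E})$. Since $\mathcal{E}$ is a direct sum of line bundles, its symmetric power decomposes as
$$\operatorname{Sym}^a(\mathcal{O}_{\mathbb{P}^1}\oplus\mathcal{O}_{\mathbb{P}^1}(-e)) = \bigoplus_{j=0}^{a} \mathcal{O}_{\mathbb{P}^1}^{\otimes(a-j)}\otimes \mathcal{O}_{\mathbb{P}^1}(-e)^{\otimes j} = \bigoplus_{j=0}^{a}\mathcal{O}_{\mathbb{P}^1}(-je).$$
This yields exactly $\mathcal{O}_{\mathbb{P}^1}\oplus\mathcal{O}_{\mathbb{P}^1}(-e)\oplus\cdots\oplus\mathcal{O}_{\mathbb{P}^1}(-ae)$, matching the claimed formula. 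The only remaining subtlety is checking that the normalization convention in the paper — where $C_e$ is the section with $C_e^2 = -e$ and $\mathcal{O}_{\mathbb{F}_e}(C_e)=\mathcal{O}_{\mathbb{F}_e}(1)$ — agrees with the convention under which $\pi_\ast\mathcal{O}(a) = \operatorname{Sym}^a(\mathcal{E})$ holds without an extra twist; I would verify this by confirming the base case $a=1$, where $\pi_\ast\mathcal{O}_{\mathbb{F}_e}(C_e) = \mathcal{E} = \mathcal{O}_{\mathbb{P}^1}\oplus\mathcal{O}_{\mathbb{P}^1}(-e)$, which is consistent with the stated formula.

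**Anticipated obstacle.**
The main point requiring care is the convention-matching, since different sources normalize $\mathbb{P}(\mathcal{E})$ and the relative hyperplane class differently (Grothendieck versus classical projectivization, and whether one uses $\mathcal{E}$ or $\mathcal{E}^\vee$). Rather than invoking an external formula blindly, the cleanest self-contained route is induction on $a$. I would prove the base cases $a=0$ (where $\pi_\ast\mathcal{O}_{\mathbb{F}_e} = \mathcal{O}_{\mathbb{P}^1}$) and $a=1$ directly, and then run an inductive step. For the induction, one uses the short exact sequence relating sections of $aC_e$ to those of $(a-1)C_e$ obtained by restricting to the negative section $C_e$, namely the sequence
$$0 \to \mathcal{O}_{\mathbb{F}_e}((a-1)C_e) \to \mathcal{O}_{\mathbb{F}_e}(aC_e) \to \mathcal{O}_{C_e}(aC_e|_{C_e}) \to 0,$$
pushes it forward, and identifies the new direct summand $\mathcal{O}_{\mathbb{P}^1}(-ae)$ as the pushforward of the restriction term, using $C_e \cong \mathbb{P}^1$ and $C_e^2 = -e$ so that $\mathcal{O}_{C_e}(aC_e|_{C_e}) \cong \mathcal{O}_{\mathbb{P}^1}(-ae)$. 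The vanishing of $R^1\pi_\ast$ for these bundles (immediate since they are pushforwards from fibers of relative degree $a \geqslant 0$) ensures the pushed-forward sequence stays short exact, and since everything splits over $\mathbb{P}^1$, the inductive conclusion follows.
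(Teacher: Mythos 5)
Your main computation is exactly the paper's proof: it invokes $\pi_\ast\mathcal{O}_{\mathbb{P}(\mathcal{E})}(a)=\operatorname{Sym}^a(\mathcal{E})$ (Hartshorne II.7.11) and expands the symmetric power of the direct sum of line bundles, and your convention check via the base case $a=1$ is consistent with the paper's normalization $C_e^2=-e$. The inductive route you sketch as a fallback is also sound but unnecessary; the argument as given matches the paper.
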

\begin{proof}
	From \cite[Chapter 2, Proposition 7.11]{Ha} we can see that 
	\begin{align*}
		\pi_\ast(aC_e)&={\rm Sym}^a(\mathcal{O}_{\mathbb{P}^1}\oplus\mathcal{O}_{\mathbb{P}^1}(-e))\\
		&=\bigoplus_{i+j=a}\left({\rm Sym}^i(\mathcal{O}_{\mathbb{P}^1})\otimes{\rm Sym}^j(\mathcal{O}_{\mathbb{P}^1}(-e))\right) \\
		&=\mathcal{O}_{\mathbb{P}^1}\oplus\mathcal{O}_{\mathbb{P}^1}(-e)\oplus\mathcal{O}_{\mathbb{P}^1}(-2e)\oplus\dots \oplus \mathcal{O}_{\mathbb{P}^1}(-ae).
	\end{align*}
\end{proof}

\begin{proposition}\label{dim}
	Let $ a \geqslant  0 \text{ and }b \geqslant ae$. Then we have
	$$h^0(\mb F_e,aC_e+bf_e)=(a+1)(b+1-\frac{ae}{2}).$$
\end{proposition}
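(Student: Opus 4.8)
The plan is to reduce the computation to cohomology of line bundles on $\mathbb{P}^1$ by pushing forward along $\pi$. First I would observe that $f_e$ is a fiber class, so $\mathcal{O}_{\mathbb{F}_e}(bf_e)=\pi^\ast\mathcal{O}_{\mathbb{P}^1}(b)$; then the projection formula together with Lemma \ref{pfw_phi} gives
$$\pi_\ast\mathcal{O}_{\mathbb{F}_e}(aC_e+bf_e)=\pi_\ast(aC_e)\otimes\mathcal{O}_{\mathbb{P}^1}(b)=\bigoplus_{i=0}^{a}\mathcal{O}_{\mathbb{P}^1}(b-ie)\,.$$

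Since global sections are preserved under direct image, one has $H^0(\mathbb{F}_e,aC_e+bf_e)=H^0\bigl(\mathbb{P}^1,\pi_\ast\mathcal{O}_{\mathbb{F}_e}(aC_e+bf_e)\bigr)$, and because cohomology commutes with finite direct sums this equals $\sum_{i=0}^{a}h^0\bigl(\mathbb{P}^1,\mathcal{O}_{\mathbb{P}^1}(b-ie)\bigr)$. (One could equally invoke the Leray spectral sequence, noting that the restriction of $aC_e+bf_e$ to a fiber has degree $(aC_e+bf_e)\cdot f_e=a\geqslant 0$ and hence $R^1\pi_\ast=0$, but the identity $H^0(X,\mathcal{F})=H^0(Y,\pi_\ast\mathcal{F})$ is already immediate from the definition of the direct image.)

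At this stage the hypothesis $b\geqslant ae$ does the essential work: for each $0\leqslant i\leqslant a$ it yields $b-ie\geqslant b-ae\geqslant 0$, so every summand is a nonnegative-degree line bundle on $\mathbb{P}^1$ and contributes $h^0\bigl(\mathbb{P}^1,\mathcal{O}_{\mathbb{P}^1}(b-ie)\bigr)=b-ie+1$. Summing the resulting arithmetic progression gives
$$\sum_{i=0}^{a}(b-ie+1)=(a+1)(b+1)-e\cdot\frac{a(a+1)}{2}=(a+1)\Bigl(b+1-\frac{ae}{2}\Bigr)\,,$$
which is the asserted value.

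There is no serious obstacle in this argument; it is essentially bookkeeping once the pushforward is identified. The one point that genuinely matters is the inequality $b\geqslant ae$, which is exactly what guarantees that none of the line bundles $\mathcal{O}_{\mathbb{P}^1}(b-ie)$ has negative degree---without it some summands would vanish and the clean closed form would no longer hold.
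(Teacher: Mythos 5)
Your argument is correct and follows the paper's proof essentially verbatim: push forward along $\pi$, apply the projection formula together with Lemma \ref{pfw_phi}, and sum the dimensions $h^0(\mathbb{P}^1,\mathcal{O}_{\mathbb{P}^1}(b-ie))=b-ie+1$, using $b\geqslant ae$ to ensure all degrees are nonnegative. Your additional remark explaining why $H^0(X,\mathcal{F})=H^0(Y,\pi_\ast\mathcal{F})$ is a welcome clarification of a step the paper labels ``clearly,'' but it does not change the route.
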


\begin{proof}
	Clearly, $h^0(\mb F_e,aC_e+bf_e)=h^0(\mathbb{P}^1,\pi_\ast(aC_e+bf_e))$. 
	By the projection formula, we have
	\[h^0(\mb F_e,aC_e+bf_e)=h^0(\mathbb{P}^1,\pi_\ast(aC_e)\otimes \mathcal{O}_{\mathbb{P}^1}(b)).\]
	Now using Lemma \ref{pfw_phi}, we get
	\begin{equation*}
		\begin{split}
			h^0(\mb F_e,aC_e+bf_e)&=h^0(\mathbb{P}^1,\pi_\ast(aC_e)\otimes \mathcal{O}_{\mathbb{P}^1}(b))\\
			&=h^0(\mathbb{P}^1,\mathcal{O}_{\mathbb{P}^1}(b)\oplus\mathcal{O}_{\mathbb{P}^1}(b-e)\oplus\mathcal{O}_{\mathbb{P}^1}(b-2e)\oplus\dots \oplus \mathcal{O}_{\mathbb{P}^1}(b-ae))\\
			&=h^0(\mathbb{P}^1,\mathcal{O}_{\mathbb{P}^1}(b))+h^0(\mathbb{P}^1,\mathcal{O}_{\mathbb{P}^1}(b-e))+\dots+h^0(\mathbb{P}^1,\mathcal{O}_{\mathbb{P}^1}(b-ae))\\
			&=(b+1)+(b-e+1)+\dots+(b-ae+1)\\
			&=(a+1)b+a+1-\frac{a(a+1)}{2}e\\
			&=(a+1)(b+1)-\frac{a(a+1)}{2}e\\
			&=(a+1)(b+1-\frac{ae}{2}).\\
		\end{split}		
	\end{equation*}
\end{proof}

\section{Curves on blowups}\label{curves on blowups}
In this section we obtain explicit descriptions of 
$(-1)$-curves  and $(-2)$-curves on the blowup of 
$r$ points on Hirzebruch surface $\mathbb{F}_e$ when 
$r \leqslant e+4$, thereby, generalizing \cite{HJNS24}.

Let $P:=\{p_1,\ldots,p_{r}\}$ be a subset of $\mb F_e$ such that  
$p_i$ are in different fibers of $\pi$.
Let $\pi_P:\mb F_{e,P}\to \mb F_e$ denote the blowup of $\mb F_e$ 
at the points in $P$. Let $E_i$ be the exceptional divisor over the point $p_i$.
Let $K_e$ denote the canonical divisor of $\mb F_e$ and let 
$K_{e,P}$ denote the canonical divisor of $\mb F_{e,P}$. 
Then we have 
$$K_{e,P}=K_e+\sum_{i=1}^{r}E_i=-2C_e-(e+2)f_e+\sum_{i=1}^{r}E_i\,.$$
Let $P_i:=P\setminus \{p_i\}$. Let $\pi_{P_i}:\mb F_{e,P_i}\to \mb F_e$ denote 
the blowup of $\mb F_e$ at the points in $P_i$.

We prove some results  
which will help us write down all $(-1)$-curves.

\begin{lemma}\label{lem1 -1}
	Let $r\leqslant e+2$. Let $P=\{p_1,\dots,p_{r}\} \subset \mathbb{F}_e$ be 
	$r$ points on $\mathbb{F}_e$ in distinct fibers of the map $\pi$. Let 
	$C$ be a smooth rational curve such that $C^2=-1$. 
	If $\pi_P(C)={\rm pt}$ then $C=E_i$, one of the exceptional 
	divisors. If $\pi_P(C)$ is a curve, then write
	$C=aC_e+bf_e-\sum_{i=1}^rm_iE_i \subset  \mathbb{F}_{e,P}$, 
	where $a,b,m_i$ are non-negative integers (using \cite[Chapter 5, Corollary 2.18]{Ha}). Then one  
	of the following holds:
	\begin{enumerate}[label=(\alph*)]
		\item $a=0,b=1$,
		\item $a=1,b=0$,
		\item $a=1,b=e$.
	\end{enumerate}   
\end{lemma}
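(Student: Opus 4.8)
The plan is to first dispose of the contracted case and then extract numerical constraints on $(a,b,m_i)$ rigid enough to leave only the three listed possibilities. If $\pi_P(C)$ is a point, then $C$ lies in a fiber of $\pi_P$; the only integral curves contracted by $\pi_P$ are the exceptional divisors, so $C=E_i$ for some $i$. Hence assume $D:=\pi_P(C)$ is a curve and write $C=aC_e+bf_e-\sum_i m_iE_i$ with $a,b,m_i\geqslant 0$. Since $D$ is integral, \cite[Chapter 5, Corollary 2.18]{Ha} gives a trichotomy: $D=f_e$ (so $a=0,b=1$, case (a)); $D=C_e$ (so $a=1,b=0$, case (b)); or $a\geqslant 1$ and $b\geqslant ae$. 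It remains to show that the last possibility forces $a=1,b=e$.

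In that case I would extract two equations. Applying adjunction to the smooth rational curve $C$ with $C^2=-1$ gives $C\cdot K_{e,P}=-1$, and expanding $K_{e,P}=-2C_e-(e+2)f_e+\sum_i E_i$ yields
$$\sum_i m_i=2a+2b-ae-1\,.$$
Computing $C^2=-1$ directly gives
$$\sum_i m_i^2=2ab-ea^2+1\,.$$
The hypothesis that the $p_i$ lie in distinct fibers is crucial here: the strict transform of the fiber through $p_i$ is $f_e-E_i$, and intersecting the integral curve $C$ (which differs from it, as $a\geqslant 1$) against it gives $C\cdot(f_e-E_i)=a-m_i\geqslant 0$, i.e.\ $m_i\leqslant a$. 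Summing, $\sum_i m_i\leqslant ra\leqslant (e+2)a$; feeding in the first displayed equation forces $2b\leqslant 2ae+1$, hence $b\leqslant ae$, and combined with $b\geqslant ae$ this pins down $b=ae$.

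Finally, with $b=ae$ the two equations become $\sum_i m_i=a(e+2)-1$ and $\sum_i m_i^2=a^2e+1$. Applying the Cauchy--Schwarz inequality $(\sum_i m_i)^2\leqslant r\sum_i m_i^2$ with $r\leqslant e+2$ and simplifying leaves
$$2(e+2)\,a(a-1)\leqslant e+1\,,$$
which is impossible for $a\geqslant 2$; hence $a=1$ and $b=e$, giving case (c). The main obstacle is not any single step but assembling the inequalities in the right order: in particular, recognizing that the distinct-fibers hypothesis is exactly what yields the clean bound $m_i\leqslant a$, which is what couples the self-intersection and adjunction data to the constraint $r\leqslant e+2$ and collapses the problem to the three stated classes.
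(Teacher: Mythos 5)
Your proof is correct, and it reaches the conclusion by a route that differs from the paper's in a meaningful way. The paper first reduces by induction on $r$ to the case where all $m_i>0$, then considers the image $C_1$ of $C$ in the blowup at $P\setminus\{p_1\}$ (which is nef since $C_1^2=-1+m_1^2\geqslant 0$) and pairs it against an auxiliary effective divisor in $|C_e+ef_e|$ through the remaining $r-1$ points (whose existence comes from $h^0(\mathbb{F}_e,C_e+ef_e)=e+2>r-1$); this yields the sharper bound $\sum_i m_i\leqslant a+b$, which combined with $\sum_i m_i=2a+2b-ae-1$ and $b\geqslant ae$ kills $a\geqslant 2$ and pins $b=e$ in a single step, with no need for Cauchy--Schwarz. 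You instead use only the fiber bound $m_i\leqslant a$ to get $\sum_i m_i\leqslant ra\leqslant(e+2)a$, which pins $b=ae$ but not $a$, and then invoke Cauchy--Schwarz on $\sum m_i=a(e+2)-1$, $\sum m_i^2=a^2e+1$ to exclude $a\geqslant 2$ via $2(e+2)a(a-1)\leqslant e+1$. Your version is more self-contained --- it needs no induction, no auxiliary divisor, no dimension count, and handles vanishing $m_i$ without a separate reduction --- and it is in fact the same Cauchy--Schwarz technique the paper deploys later for the harder cases $r=e+3$ and $r=e+4$. What the paper's argument buys in exchange is a linear (rather than quadratic) inequality that settles $a$ and $b$ simultaneously, which is the form that generalizes to its $(-2)$-curve analysis.
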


\begin{proof}
	It is clear that if $\pi_P(C)={\rm pt}$, then $C=E_i$, one of the exceptional 
	divisors. So let us assume that $\pi_P(C)$ is a curve. Thus, we may write 
	$\pi_P(C)=aC_e+bf_e$. From \cite[Chapter 5, Corollary 2.18]{Ha} it follows that 
	either $a=0,b=1$ or $a=1, b=0$ or $a> 0, b\geqslant  ae$. 
	It suffices to consider the 
	case when $a> 0, b\geqslant  ae$. 
	
	Since $C$ is the strict transform of the irreducible 
	curve $\pi_P(C)$, it follows that we may write 
	$C=aC_e+bf_e-\sum_{i=1}^rm_iE_i$, with $m_i\geqslant 0$.
	By the genus formula we have $K_{e,P}\cdot C=-1$.
	This and the condition $C^2=-1$ gives 
	\begin{equation}\label{sum_mi_r (-1)}
		\sum_{i=1}^{r}m_i=2b+2a-ae-1,
	\end{equation}
	and
	\begin{equation}\label{sum_mi^2 _r (-1)}
		\sum_{i=1}^{r}m_i^2=2ab-a^2e+1.
	\end{equation}
	We may assume 
	$m_i>0$ for all $1\leqslant i \leqslant r$, or else we reduce to $r-1$ case. 
	It is easily checked that the assertion is true when $r=0$. So if 
	one of the $m_i=0$, we are done by induction on $r$. 
	
	Let $C_1$ be the image of $C$ in $\mathbb{F}_{e,P_1}$. We see that 
	$C_1$ is an integral curve with $C_1^2=-1+m_1^2\geqslant 0$. Using Proposition \ref{dim}, 
	we see that $h^0(\mb F_e, C_e+ef_e)=e+2 >r-1$, which guarantees the existence of an 
	effective divisor $D=C_e+ef_e-\sum_{i=2}^{r}n_iE_i$ on $\mathbb{F}_{e,P_1}$, where all 
	$n_i\geqslant 1$. So we have,
	$$0\leqslant C_1\cdot D=b-\sum_{i=2}^{r}n_im_i\leqslant b-\sum_{i=2}^{r}m_i\,.$$
	The strict transform of the 
	fiber of the map $\pi:\mb F_e\to \mb P^1$ passing through the point $p_i$
	is an irreducible curve whose class is $f_e-E_i$. This curve is distinct 
	from $C$, 
	and so we get $C \cdot (f_e-E_i)\geqslant 0$. This shows that $m_i\leqslant a$ 
	for all $i$. 
	As $m_1\leqslant a $, we get
	$$\sum_{i=1}^rm_i\leqslant b+a.$$
	Hence from \eqref{sum_mi_r (-1)},
	$$2a+2b-ae-1\leqslant b+a$$
	Thus $b\leqslant a(e-1)+1$. But as $b\ne 0$ from 
	\cite[Chapter 5, Corollary 2.18]{Ha} we have $b\geqslant ae$. 
	It is now easily checked that $a=1,b=e$ is the only possibility.
\end{proof}

\begin{lemma}\label{lem r+3 C^2=-1}
	Let $r=e+3$. Let $P=\{p_1,\dots,p_{r}\} \subset \mathbb{F}_e$ be $r$ 
	points on $\mathbb{F}_e$ in distinct fibers of the map $\pi$. Let 
	$C$ be a smooth rational curve such that $C^2=-1$. 
	If $\pi_P(C)={\rm pt}$ then $C=E_i$, one of the exceptional 
	divisors. If $\pi_P(C)$ is a curve, then write
	$C=aC_e+bf_e-\sum_{i=1}^rm_iE_i \subset  \mathbb{F}_{e,P}$, 
	where $a,b,m_i$ are non-negative integers. Then  one  
	of the following holds:
	\begin{enumerate}
		\item $a=0,b=1$,
		\item $a=1,b=0$,
		\item $a=1,b=e$,
		\item $a=1,b=e+1$.
	\end{enumerate}   
\end{lemma}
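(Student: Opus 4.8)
The plan is to follow the scheme of the proof of Lemma~\ref{lem1 -1}, reducing first to the essential configuration and then extracting numerical constraints from the genus formula and the condition $C^2=-1$. As there, if $\pi_P(C)$ is a point then $C=E_i$, so I assume $\pi_P(C)$ is a curve and write $\pi_P(C)=aC_e+bf_e$. By \cite[Chapter 5, Corollary 2.18]{Ha}, either $(a,b)=(0,1)$, or $(a,b)=(1,0)$, or $a>0$ and $b\geqslant ae$; the first two give cases (1) and (2), so I may assume $a>0$, $b\geqslant ae$, and write $C=aC_e+bf_e-\sum_{i=1}^r m_iE_i$ with $m_i\geqslant 0$. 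The genus formula gives $K_{e,P}\cdot C=-1$, which together with $C^2=-1$ yields
\begin{equation}\label{eq:relns}
\sum_{i=1}^{r}m_i=2a+2b-ae-1,\qquad \sum_{i=1}^{r}m_i^2=2ab-a^2e+1 .
\end{equation}
The main obstacle is that Lemma~\ref{lem1 -1} used $h^0(\mathbb{F}_e,C_e+ef_e)=e+2>r-1$ to produce an auxiliary curve through the remaining $r-1$ points; for $r=e+3$ this becomes the equality $e+2=r-1$, and that argument degenerates, so a substitute is needed.

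I first dispose of the degenerate configurations. If some $m_i=0$, then $C$ is a $(-1)$-curve on the blowup of $\mathbb{F}_e$ at the $e+2$ points $P\setminus\{p_i\}$, so Lemma~\ref{lem1 -1} applies and gives $(a,b)\in\{(0,1),(1,0),(1,e)\}$, i.e.\ cases (1)--(3). Hence I may assume $m_i\geqslant 1$ for all $i$. Since $a>0$ and the $p_i$ lie in distinct fibers, the strict transform $f_e-E_i$ of the fiber through $p_i$ is an irreducible curve different from $C$, so $0\leqslant C\cdot(f_e-E_i)=a-m_i$, giving $1\leqslant m_i\leqslant a$ for every $i$.

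The heart of the argument is then a bookkeeping of the deficiencies $a-m_i\geqslant 0$. Using $r=e+3$ together with \eqref{eq:relns}, a direct computation gives
\begin{equation}\label{eq:defic}
\sum_{i=1}^{r}(a-m_i)=2ae+a-2b+1,\qquad \sum_{i=1}^{r}(a-m_i)^2=2a^2e-a^2-2ab+2a+1 .
\end{equation}
Since the left-hand side of the second identity is nonnegative, $2ab\leqslant 2a^2e-a^2+2a+1$, that is, $b\leqslant ae-\tfrac{a^2-2a-1}{2a}$. For $a\geqslant 3$ we have $a^2-2a-1>0$, forcing $b<ae$ and contradicting $b\geqslant ae$; thus $a\in\{1,2\}$. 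If $a=2$ the same inequality gives $b\leqslant 2e+\tfrac14$, hence $b=2e$ by $b\geqslant 2e$; but then \eqref{eq:defic} reads $\sum(a-m_i)=3$ and $\sum(a-m_i)^2=1$, which is impossible since each $a-m_i\in\{0,1\}$ forces $(a-m_i)^2=a-m_i$ and so $\sum(a-m_i)^2=\sum(a-m_i)$. Therefore $a=1$, whence $b\leqslant e+1$ and, with $b\geqslant ae=e$, either $b=e$ or $b=e+1$. The value $b=e$ is excluded: with $a=1$ every $m_i=1$, so $\sum(a-m_i)=0$, whereas \eqref{eq:defic} would give $\sum(a-m_i)=2$. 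Thus $a=1$ and $b=e+1$, which is case (4).

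The only genuinely delicate point is this final step. A single application of Cauchy--Schwarz to \eqref{eq:relns} would yield only $a\leqslant 2$; it is the combined use of the first and second deficiency sums in \eqref{eq:defic}, together with the integrality $1\leqslant m_i\leqslant a$, that eliminates $a=2$ and the stray value $b=e$ and pins down the single new class $a=1,\ b=e+1$.
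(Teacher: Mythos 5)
Your proof is correct, and while its skeleton (reduce to $a>0$, $b\geqslant ae$, all $m_i\geqslant 1$, and $m_i\leqslant a$ via intersection with $f_e-E_i$) coincides with the paper's, the key numerical step is genuinely different. The paper first bounds $b$ geometrically: $h^0(\mathbb{F}_e, C_e+(e+1)f_e)=e+4>e+3$ produces an effective divisor through the points of $P_1$, whose intersection with the image of $C$ gives $\sum m_i\leqslant b+2a$ and hence $b\in\{ae,ae+1\}$; the Cauchy--Schwarz inequality $(\sum m_i)^2\leqslant(e+3)\sum m_i^2$ then eliminates $a\geqslant 2$ in each subcase. You replace both steps by the single observation that $\sum_{i}(a-m_i)^2=2a^2e-a^2-2ab+2a+1$ (I verified this identity and its companion for $\sum(a-m_i)$), so nonnegativity of the left-hand side alone gives $b\leqslant ae-\tfrac{a^2-2a-1}{2a}$; this kills $a\geqslant 3$ outright, forces $b=2e$ for $a=2$ (then ruled out because $a-m_i\in\{0,1\}$ makes $\sum(a-m_i)^2=\sum(a-m_i)$, contradicting $1\neq 3$ --- the same discrepancy of $2$ as the paper's $4e+1\neq 4e+3$), and gives $b\leqslant e+1$ for $a=1$. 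Your route is more elementary: it needs neither the dimension count of Proposition \ref{dim}, nor an auxiliary effective divisor, nor Cauchy--Schwarz; the trade-off is that the identity is specific to $r=e+3$, whereas the paper's geometric bound $b\in\{ae,ae+1\}$ is deliberately proved for $r\in\{e+3,e+4\}$ so it can be reused in Lemma \ref{C^2=-1 e+4}. One small remark: your exclusion of $b=e$ in the all-$m_i\geqslant 1$ branch is correct and does not conflict with the statement, since the case $(a,b)=(1,e)$ enters the final list through the branch where some $m_i=0$ and Lemma \ref{lem1 -1} applies.
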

\begin{proof}
	As in the proof of the previous Lemma, we may assume that 
	$a>0, b\geqslant ae$. 
	Consider the situation when $r\in \{e+3,e+4\}$.
	Similar to the previous proof we may assume $m_i>0$ 
	for all $1\leqslant i \leqslant r$. Recall equations 
	\eqref{sum_mi_r (-1)} and \eqref{sum_mi^2 _r (-1)}.
	Let $C_1$ be the image of $C$ in $\mathbb{F}_{e,P_1}$. Then $C_1$ is an 
	integral curve with $C_1^2\geqslant 0$. As earlier, using Proposition \ref{dim} we see that 
	$h^0(\mb F_e, C_e+(e+1)f_e)=e+4 > e+3$, which guarantees  
	$D=C_e+(e+1)f_e-\sum_{i=2}^{r}n_iE_i$ is effective divisor on $\mathbb{F}_{e,P_1}$, where all 
	$n_i\geqslant 1$. Hence
	$$0\leqslant C_1\cdot D=b+a-\sum_{i=2}^{r}n_im_i\leqslant b+a-\sum_{i=2}^{r}m_i\,.$$
	Using $m_1\leqslant a $, we get
	$$\sum_{i=1}^{r}m_i\leqslant b+2a.$$
	Hence, from \eqref{sum_mi_r (-1)},
	$$2a+2b-ae-1\leqslant b+2a.$$
	Thus $b\leqslant ae+1$. But as $b\ne 0$ from 
	\cite[Chapter 5, Corollary 2.18]{Ha} we have $b\geqslant ae$.
	Thus we have two possibilities for the value of $b$ when $r\in\{e+3,e+4\}$.
	We will use this fact in the next Lemma also. 
	
	For the remainder of the proof of this Lemma, we assume that $r=e+3$. 
	
	By Cauchy-Schwarz inequality, we have
	$$\left(\sum_{i=1}^{e+3}m_i\right)^2\leqslant (e+3) \sum_{i=1}^{e+3}m_i^2\,,$$
	that is,
	\begin{equation}\label{Cauchy}
		(2a+2b-ae-1)^2\leqslant (e+3)(2ab-a^2e+1) \text{ (using \eqref{sum_mi_r (-1)} and \eqref{sum_mi^2 _r (-1)})}.
	\end{equation}
	First, consider the case when $b=ae$.
	In this case \eqref{Cauchy} reduces to
	$$(a(e+2)-1)^2\leqslant (e+3)(a^2e+1).$$
	We can see that this is not possible when $a\geqslant 3$. 
	If $a=2$, we have $\sum_{i=1}^{e+3}m_i=2e+3$ and 
	$1\leqslant m_i\leqslant 2$ for all $1\leqslant i \leqslant r$. 
	The only possibility is 
	\begin{itemize}
		\item cardinality of the set $\{i\,|\, m_i=2\}=e$,
		\item cardinality of the set $\{i\,|\, m_i=1\}=3$.
	\end{itemize}
	In this case $\sum_{i=1}^{e+3}m_i^2=4e+3$. 
	But \eqref{sum_mi^2 _r (-1)} says that 
	$\sum_{i=1}^rm_i^2=4e+1$. Hence $a=2$ is not possible. Thus  $a=1,b=e$.
	
	Next consider the case when $b=ae+1$.
	Here \eqref{Cauchy} reduces to
	$$(a(e+2)+1)^2\leqslant (e+3)(a^2e+2a+1)\,,$$
	which simplifies to
	$$a^2e+4a^2\leqslant 2a+2+e.$$
	It is easy to see if $a\geqslant 2$, we get $e$ is negative which is a 
	contradiction. Hence $a=1,b=e+1$.
\end{proof}

\begin{lemma}\label{C^2=-1 e+4}
	Let $r=e+4$. Let $P=\{p_1,\dots,p_{r}\} \subset \mathbb{F}_e$ be $r$ 
	points on $\mathbb{F}_e$ in distinct fibers of 
	the map $\pi$. If $e\geqslant 1$, further assume that 
	none of the $p_i$ are on $C_e$. 
	Let $C$ be a smooth rational curve such that $C^2=-1$. 
	If $\pi_P(C)={\rm pt}$ then $C=E_i$, one of the exceptional 
	divisors. If $\pi_P(C)$ is a curve, then write
	$C=aC_e+bf_e-\sum_{i=1}^rm_iE_i \subset  \mathbb{F}_{e,P}$, 
	where $a,b,m_i$ are non-negative integers. Then one  
	of the following holds:
	\begin{enumerate}[label=(\alph*)]
		\item $a=0, b=1$ ,
		\item $a=1, b=0$,
		\item $a=1,b=e$,
		\item $a=1,b=e+1$,
		\item \label{e+4, exceptional type 1}$e\geqslant 1$, $2\leqslant a\leqslant (e+3)/2$, 
		$C=aC_e+aef_e-\sum_{i=1}^{e+4}m_iE_i$, where 
		\begin{itemize}
			\item cardinality of the set $\{i\,|\, m_i=a-1\}=2a+1$,
			\item cardinality of the set $\{i\,|\, m_i=a\}=e-2a+3$.
		\end{itemize}
	\end{enumerate}   
\end{lemma}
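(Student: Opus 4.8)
The plan is to follow the template of Lemma~\ref{lem r+3 C^2=-1}, reducing everything to an elementary analysis of the two numerical identities \eqref{sum_mi_r (-1)} and \eqref{sum_mi^2 _r (-1)}, which hold verbatim here since they only encode $C^2=-1$ and $K_{e,P}\cdot C=-1$. First I would dispose of the degenerate cases exactly as before: if $\pi_P(C)$ is a point then $C=E_i$, and if $\pi_P(C)$ is a curve then \cite[Chapter 5, Corollary 2.18]{Ha} forces $a=0,b=1$ (case (a)), or $a=1,b=0$ (case (b)), or $a>0$ and $b\geqslant ae$, so it remains to treat the last situation. Next I would run an induction on $r$: if some $m_i=0$ then $C$ is the strict transform of a curve on the blowup of $\mb F_e$ at the remaining $e+3$ points, and Lemma~\ref{lem r+3 C^2=-1} (or Lemma~\ref{lem1 -1} after removing further points) shows that its class is one of (a)--(d). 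Hence I may assume $1\leqslant m_i$ for every $i$.

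With all $m_i>0$ in force, two inputs are available from the proof of Lemma~\ref{lem r+3 C^2=-1}: the bound $b\in\{ae,ae+1\}$, which was established there for $r\in\{e+3,e+4\}$, and the inequality $m_i\leqslant a$, coming from $0\leqslant C\cdot(f_e-E_i)=a-m_i$ (valid because $C\neq f_e-E_i$ once $a\geqslant 1$). The key manoeuvre is the substitution $m_i=a-k_i$ with $0\leqslant k_i\leqslant a-1$. Feeding this into \eqref{sum_mi_r (-1)} and \eqref{sum_mi^2 _r (-1)} with $r=e+4$ turns the two identities into statements about $\sum k_i$ and $\sum k_i^2$ alone. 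For $b=ae+1$ I expect to obtain $\sum k_i=2a-1$ and $\sum k_i^2=1$; since $\sum k_i^2=1$ forces a single $k_i=1$ and the rest $0$, this gives $\sum k_i=1$, hence $2a-1=1$ and $a=1$, which then makes every $k_i=0$ and contradicts $\sum k_i^2=1$. Thus $b=ae+1$ is impossible once $m_i>0$, and case (d) survives only through the reductive step above.

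For $b=ae$ the same substitution should yield $\sum k_i=\sum k_i^2=2a+1$, whence $\sum k_i(k_i-1)=0$; as each summand is nonnegative, $k_i\in\{0,1\}$ for all $i$. Counting then produces exactly $2a+1$ indices with $m_i=a-1$ and $(e+4)-(2a+1)=e-2a+3$ indices with $m_i=a$, which is case~\ref{e+4, exceptional type 1}; the inequalities $a\geqslant 2$ (since $a=1$ forces all $k_i=0$, contradicting $\sum k_i=2a+1=3$) and $2a+1\leqslant e+4$, i.e.\ $a\leqslant(e+3)/2$, come out automatically. I would finish by substituting $b=ae$ to recover the displayed class $C=aC_e+aef_e-\sum m_iE_i$ and note that $e\geqslant 1$ is exactly the condition for this case to be nonempty.

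The routine part is the arithmetic of the two substitutions; the step I expect to require the most care is the bookkeeping of the induction together with the hypothesis that no $p_i$ lies on $C_e$ (for $e\geqslant 1$). This hypothesis is what lets me treat $\pi_P(C)=C_e$ cleanly---its strict transform is then $C_e$ itself, a $(-1)$-curve only when $e=1$, landing in case (b)---and, more importantly, it is what guarantees that the auxiliary members of $|C_e+(e+1)f_e|$ used to pin down $b\leqslant ae+1$ can be chosen without $C_e$ as a component, so that the intersection estimates feeding $b\in\{ae,ae+1\}$ remain valid. Getting the interplay between this positivity hypothesis and the descent to $e+3$ points exactly right is the main obstacle; the Diophantine endgame is then short.
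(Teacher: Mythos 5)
Your proof is correct, and its core is a genuinely different (and shorter) route than the paper's. Both arguments share the same setup: reduce to $a>0$, $b\geqslant ae$ with all $m_i\geqslant 1$ by induction on $r$, import $b\in\{ae,ae+1\}$ from the proof of Lemma \ref{lem r+3 C^2=-1} (the paper explicitly records that this bound holds for $r\in\{e+3,e+4\}$), and use $m_i\leqslant a$ from intersecting with the fiber classes $f_e-E_i$. Where you diverge is the endgame. The paper first applies Cauchy--Schwarz to \eqref{sum_mi_r (-1)} and \eqref{sum_mi^2 _r (-1)} to get $e\geqslant 2a-3$ (resp.\ $e\geqslant(2a+1)(2a-3)$), then splits into the subcases ``all $m_i<a$'' (handled by the identity $\sum\frac{m_i}{a}(1-\frac{m_i}{a})=\frac{(2a+1)(a-1)}{a^2}$) and ``some $m_i=a$'' (handled by an elementary transformation \eqref{elementary-transformation} descending from $\mathbb F_e$ to $\mathbb F_{e-1}$, which is where the hypothesis $p_i\notin C_e$ is actually used). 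Your substitution $k_i=a-m_i$ turns the two identities directly into $\sum k_i=\sum k_i^2=2a+1$ (for $b=ae$), hence $\sum k_i(k_i-1)=0$ and $k_i\in\{0,1\}$, which yields case \ref{e+4, exceptional type 1} with the counts, the bound $a\leqslant(e+3)/2$, and the exclusion of $a=1$ all at once; for $b=ae+1$ you get $\sum k_i=2a-1$, $\sum k_i^2=1$, an immediate contradiction. I verified the arithmetic in both branches and it checks out. Your approach buys a uniform treatment of the $m_i=a$ indices with no descent on $e$ (indeed, the hypothesis that no $p_i$ lies on $C_e$ plays no essential role in your argument — your worry about choosing the auxiliary divisor in $|C_e+(e+1)f_e|$ without $C_e$ as a component is unnecessary, since the estimate only uses that the image $C_1$ of $C$ in $\mathbb F_{e,P_1}$ is integral with $C_1^2\geqslant 0$, hence nef); the paper's elementary-transformation technique, by contrast, is reused in Lemma \ref{-2 curves}, so it earns its keep elsewhere.
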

\begin{proof}
	Similar to the earlier cases, we may assume that $a>0, b\geqslant ae$. If one 
	of the $m_i=0$ then we may reduce to the $r-1$ case. Thus, we also assume 
	all $m_i\geqslant 1$. 
	As observed in the proof of Lemma \ref{lem r+3 C^2=-1}, 
	we have two possibilities for the value of $b$, namely, $b=ae$ or $b=ae+1$. 
	If $a=1$, then our list contains the both the possibilities, 
	$b=e$ and $b=e+1$. So let us assume that $a\geqslant 2$ for the remainder 
	of this proof. 
	
	By Cauchy-Schwarz inequality we have
	$$\left(\sum_{i=1}^{e+4}m_i\right)^2\leqslant (e+4) \sum_{i=1}^{e+4}m_i^2\,,$$
	that is,
	\begin{equation}\label{Cauchy1}
		(2a+2b-ae-1)^2\leqslant (e+4)(2ab-a^2e+1).
	\end{equation}
	
	First, consider the case when $b=ae$.
	In this case \eqref{Cauchy1} reduces to
	$$(a(e+2)-1)^2\leqslant (e+4)(a^2e+1).$$
	It is easy to check that this yields $e\geqslant 2a-3$.

	If $e=0$, then the only 
	possible value for $a$ is 1, which has already been considered. 
	So let us assume that $e\geqslant 1$. If 
	$e=1$, then $a\in \{1,2\}$. It is easily checked, using \eqref{sum_mi_r (-1)}, 
	that $a=1$ is not possible,
	and if $a=2$ then $m_i=1$ for all $i$. Thus, in this case, the curve $C$ 
	falls in type \ref{e+4, exceptional type 1} with $e=2a-3$.
	
	Let us consider the case when for all $i$ we have $1\leqslant m_i<a$. 
	As $b=ae$, we have the equalities $\sum_{i=1}^{e+4}m_i^2=a^2e+1$
	and $\sum_{i=1}^{e+4}m_i=a(e+2)-1$. 
	We get 
	$$\sum_{i=1}^{e+4}\frac{m_i}{a}\Big(1-\frac{m_i}{a}\Big)=
	\frac{(2a+1)(a-1)}{a^2}\,.$$
	As $1\leqslant m_i<a$, we get 
	$$(e+4)\frac{(a-1)}{a^2}\leqslant \frac{(2a+1)(a-1)}{a^2}\,.$$
	This gives $e+4\leqslant 2a+1$. This forces $e=2a-3$, and we easily 
	check that $m_i=a-1$ for all $i$. Thus, in this case, the curve $C$ 
	falls in type \ref{e+4, exceptional type 1} with $e=2a-3$.
	
	Next consider the case when one of the $m_i$, say, $m_1=a$. 
	Write 
	$$C=aC_e+aef_e-aE_1-\sum_{i=2}^{e+4}m_iE_i\,.$$ 
	By assumption, $p_1\notin C_e$. 
	Consider the following
	diagram of an elementary transformation 
	\begin{equation}\label{elementary-transformation}
		\xymatrix{
			\mb F_{e,P} \ar[r]^{g'}\ar[d]_{g} & \mb F_{e,P_1}\ar[d]^\pi\\
			\mb F_{e-1,P'}\ar[r]_{\pi'} & \mb P^1.
		}
	\end{equation}
	The map $g'$ is the blowup at the point $p_1$. 
	If $F_1$ is the fiber of $\pi$ containing $p_1$, then the strict transform 
	of $F_1$ has class $f_e-E_1$. Denote this strict transform by $\t F_1\subset \mb F_{e,P}$.  
	The morphism $g$ is the blowup at a point $q$ and 
	has $\t F_1$ as the exceptional divisor. As $C\cdot \t F_1=0$,
	it follows that $g(C)^2=C^2=-1$, that is, $g(C)$ is a $(-1)$-curve on 
	$\mb F_{e-1,P'}$, and $g(C)$ does 
	not contain $q$. Moreover, it is easily checked that 
	$$g(C)=aC_{e-1}+a(e-1)f_{e-1}-\sum_{i=2}^{e+4}m_iE_i\,.$$
	The points in $P'$ satisfy the same property, they are in different fibers 
	of $\pi'$ and none of these are on $C_{e-1}$. 
	Moreover, the cardinality of $P'$ is $e+3=(e-1)+4$. Thus, repeating 
	the above process each time one of the $m_i$ equals $a$, we 
	reach a case $e'\geqslant 1$, with all $1\leqslant m_i<a$, or we reach $e'=1$. 
	However, note that when $e'=1$, we saw earlier that the only possibility is 
	$a=2$, and $m_i=1$ for all $i$. Thus, in both cases we have that all the $m_i$ satisfy 
	$m_i=a-1=(e'+3)/2-1=(e'+1)/2$. Thus, in this case, the curve $C$ 
	fall in type \ref{e+4, exceptional type 1}, with $2\leqslant a=(e'+3)/2\leqslant (e+3)/2$.
	
	Next let us consider the case when $b=ae+1$. 
	In this case \eqref{Cauchy1} reduces to
	$$(a(e+2)+1)^2\leqslant (e+4)(a^2e+2a+1).$$
	It is easy to check that this yields $e\geqslant (2a+1)(2a-3)$. 
	Recall that we have assumed that $a\geqslant 2$, and so we have $e\geqslant 5$. 
	
	Let us consider the case when for all $i$ we have $1\leqslant m_i<a$. 
	We have the equalities $\sum_{i=1}^{e+4}m_i^2=a^2e+2a+1$
	and $\sum_{i=1}^{e+4}m_i=a(e+2)+1$. 
	We get 
	$$\sum_{i=1}^{e+4}\frac{m_i}{a}\Big(1-\frac{m_i}{a}\Big)=
	\frac{(2a+1)(a-1)}{a^2}\,.$$
	As $1\leqslant m_i<a$, we get 
	$$(e+4)\frac{(a-1)}{a^2}\leqslant \sum_{i=1}^{e+4}\frac{m_i}{a}\Big(1-\frac{m_i}{a}\Big)
	=\frac{(2a+1)(a-1)}{a^2}\,.$$
	This gives $(2a+1)(2a-3)+4\leqslant e+4\leqslant 2a+1$. This yields 
	$(2a+1)(a-2)+2\leqslant 0$, which is possible only when $a=1$, 
	which is a contradiction.

	Next consider the case when one of the $m_i$, say, $m_1=a$. 
	Write 
	$$C=aC_e+(ae+1)f_e-aE_1-\sum_{i=2}^{e+4}m_iE_i\,.$$ 
	By a similar argument, as the one contained in the paragraph containing 
	\eqref{elementary-transformation}, we see that $g(C)$ is a $(-1)$-curve on 
	$\mb F_{e-1,P'}$ which does 
	not contain $q$. Moreover, it is easily checked that 
	$$g(C)=aC_{e-1}+(a(e-1)+1)f_{e-1}-\sum_{i=2}^{e+4}m_iE_i\,.$$
	The points in $P'$ satisfy the same property, they are in different fibers 
	of $\pi'$. Moreover, the cardinality of $P'$ is $e+3=(e-1)+4$. Thus, repeating 
	the above process each time one of the $m_i$ equals $a$, we reach a case 
	$e'\geqslant 5$ where all the $m_i$ satisfy 
	$1\leqslant m_i<a$, or we reach $e'=4$. As observed above, in both cases we 
	get a contradiction. 
	
	This completes the proof of the Lemma. 
\end{proof}

\begin{remark}\label{existence -1 e+4 last case}
	Consider the numerical classes of curves $C$ in Lemma \ref{C^2=-1 e+4}\ref{e+4, exceptional type 1}.
	Note that $C^2=-1$ and $-K_{e,P}\cdot C=1$.
	Using Riemann-Roch theorem we have, 
	$$h^0(\mathbb{F}_{e,P},C)\geqslant \frac{C^2-K_{e,P}\cdot C}{2}+1\geqslant 1.$$
	Hence, $C$ is an effective divisor. That is, curves in 
	Lemma \ref{C^2=-1 e+4}\ref{e+4, exceptional type 1} exist.
	However, it is not clear if these curves are reduced or irreducible. 
\end{remark}

The next Lemma will help us write down all $(-2)$-curves. 

\begin{lemma}\label{-2 curves}
	Let $r\leqslant e+4$. Let $P=\{p_1,\dots,p_{r}\} \subset \mathbb{F}_e$ be $r$ 
	points on $\mathbb{F}_e$ in distinct fibers of the map $\pi$. Let 
	$C$ be a smooth rational curve such that $C^2=-2$. 
	Then $\pi_P(C)$ is a curve. Write
	$C=aC_e+bf_e-\sum_{i=1}^rm_iE_i \subset  \mathbb{F}_{e,P}$, 
	where $a,b,m_i$ are non-negative integers. Then one  
	of the following holds:
	\begin{enumerate}[label=(\alph*)]
		\item $a=0,b=1$, 
		\item $a=1,b=0$, 
		\item $a=1,b=e$, 
		\item $a=1,b=e+1$.
	\end{enumerate}
\end{lemma}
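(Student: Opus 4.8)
The plan is to follow the same strategy as in the proofs of Lemmas \ref{lem1 -1}--\ref{C^2=-1 e+4}, treating $a=1$ and $a\geqslant 2$ separately and using induction on $e$. First I would dispose of the statement that $\pi_P(C)$ is a curve: if $\pi_P(C)$ were a point then $C$ would be contracted by $\pi_P$ and hence equal to some $E_i$, which is impossible since $E_i^2=-1\neq -2$. So $\pi_P(C)=aC_e+bf_e$ is a curve, and by \cite[Chapter 5, Corollary 2.18]{Ha} either $(a,b)=(0,1)$, or $(a,b)=(1,0)$, or $a>0$ and $b\geqslant ae$; the first two are cases (a) and (b), so I may assume $a>0$ and $b\geqslant ae$. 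Writing $C=aC_e+bf_e-\sum_i m_iE_i$, the genus formula (a smooth rational curve with $C^2=-2$ has $K_{e,P}\cdot C=0$) together with $C^2=-2$ gives
\begin{equation*}
\sum_{i=1}^r m_i=2a+2b-ae,\qquad \sum_{i=1}^r m_i^2=2ab-a^2e+2.
\end{equation*}
As in the earlier lemmas I may assume every $m_i>0$ (otherwise reduce $r$), and intersecting with the strict transform $f_e-E_i$ of the fibre through $p_i$ gives $m_i\leqslant a$ for all $i$.

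If $a=1$, then $m_i\leqslant 1$ forces $m_i=1$ for all $i$, so $r=\sum m_i=2b+2-e$. The bounds $b\geqslant e$ and $r\leqslant e+4$ then force $b\in\{e,e+1\}$, which are cases (c) and (d). It remains to rule out $a\geqslant 2$. For this I first pin down $b$: using Proposition \ref{dim}, $h^0(\mb F_e,C_e+(e+1)f_e)=e+4>r-1$, so there is an effective $D=C_e+(e+1)f_e-\sum_{i\geqslant 2}n_iE_i$ with all $n_i\geqslant 1$ passing through $p_2,\dots,p_r$. Let $C_1$ be the image of $C$ in $\mb F_{e,P_1}$; since $a\geqslant 2$, the curve $C_1$ cannot be a component of $D$ (whose $C_e$-coefficient is $1$), so $0\leqslant C_1\cdot D=a+b-\sum_{i\geqslant 2}n_im_i\leqslant a+b-\sum_{i\geqslant 2}m_i$. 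Combined with $m_1\leqslant a$ and the first displayed equation this yields $b\leqslant ae$, hence $b=ae$, and therefore $\sum m_i=a(e+2)$ and $\sum m_i^2=a^2e+2$.

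Now I would derive a contradiction from $a\geqslant 2$ by induction on $e$. The Cauchy--Schwarz inequality $(\sum m_i)^2\leqslant r\sum m_i^2$ gives $a^2(e+2)^2\leqslant(e+4)(a^2e+2)$, i.e. $e\geqslant 2a^2-4$; in particular the base case $e=0$ is impossible. A key observation is that no $p_i$ lies on $C_e$: since $a\geqslant 2$ we have $C\neq\t{C}_e$, so $0\leqslant C\cdot\t{C}_e=-\sum_{p_j\in C_e}m_j$, forcing the right-hand sum to vanish. This means the elementary transformation \eqref{elementary-transformation} is available at any point with $m_i=a$ and genuinely lowers $e$. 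I would then split into two sub-cases exactly as in Lemma \ref{C^2=-1 e+4}. If $1\leqslant m_i<a$ for all $i$, then $\sum \tfrac{m_i}{a}\bigl(1-\tfrac{m_i}{a}\bigr)=2-2/a^2$ while each summand is $\geqslant (a-1)/a^2$, giving $r\leqslant 2(a+1)$; together with $r\geqslant e+2$ (from $\sum(a-m_i)\geqslant 0$) this forces $a=2$ and then $e\leqslant 0$, contradicting $e\geqslant 2a^2-4=4$. If instead some $m_i=a$, the elementary transformation produces a smooth rational $(-2)$-curve $aC_{e-1}+a(e-1)f_{e-1}-\sum_{i\geqslant 2}m_iE_i$ on $\mb F_{e-1,P'}$, where $P'$ consists of $r-1\leqslant (e-1)+4$ points in distinct fibres; by the inductive hypothesis its $C_{e-1}$-coefficient is at most $1$, contradicting $a\geqslant 2$.

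The main obstacle is the $a\geqslant 2$ case. Unlike the $(-1)$-curve situation of Lemma \ref{C^2=-1 e+4}, where genuine curves with $a\geqslant 2$ do occur, here one must show they never do, and the delicate point is controlling the multiplicities well enough to guarantee that the elementary transformation can be applied (equivalently, that some $m_i=a$ and that the relevant point avoids $C_e$). The identity $C\cdot\t{C}_e=-\sum_{p_j\in C_e}m_j$ is exactly what makes the induction go through cleanly, since it removes the need for the extra ``$p_i\notin C_e$'' hypothesis used in Lemma \ref{C^2=-1 e+4}; verifying the numerical bounds in the two sub-cases is routine once $b=ae$ has been established.
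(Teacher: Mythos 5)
Your proof is correct and follows essentially the same route as the paper's: the genus-formula identities, the auxiliary effective divisor in $|C_e+(e+1)f_e|$ forcing $b=ae$, the Cauchy--Schwarz and $\sum\frac{m_i}{a}\bigl(1-\frac{m_i}{a}\bigr)$ estimates, and the elementary transformation when some $m_i=a$ (your direct handling of $a=1$ and clean induction on $e$, in place of the paper's reduction to the $(-1)$-curve lemmas and its iteration, are only organizational differences). One genuine improvement: your identity $C\cdot\widetilde{C}_e=-\sum_{p_j\in C_e}m_j\geqslant 0$, which forces every $p_i$ off $C_e$ once $b=ae$ and $a\geqslant 2$, supplies a justification the paper leaves implicit --- its proof appeals to the elementary-transformation paragraph of Lemma \ref{C^2=-1 e+4}, where ``$p_1\notin C_e$'' was a standing hypothesis that Lemma \ref{-2 curves} does not assume.
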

\begin{proof}
	If $\pi_P(C)$ is a point, then $C$ is forced to be $E_i$ 
	for some $i$. But then $C^2=-1$, which is a contradiction. 
	Since $C^2=-2$ and $C$ is a smooth rational curve, 
	from the genus formula it follows that $K_{e,P} \cdot C=0$.
	
	Suppose $P$ is not a subset of $\pi_P(C)$. Then there is $i$ such that 
	$p_i\notin \pi_P(C)$. It is clear that $m_i=0$ and that $C$ is
	the strict transform of $C_i$, an irreducible smooth rational curve on 
	$\mb F_{e,P_i}$, and which satisfies $C_i^2=-2$, and we are done by induction 
	on $r$. The base case for the induction can be taken to be $r=0$,
	in which case the assertion is clearly true. 
	
	So consider the case when $C'=\pi_P(C)$ is a curve containing 
	the set $P$. Then $C=aC_e+bf_e-\sum_{i=1}^{r}m_iE_i$, where $m_i\geqslant 1$ 
	as $P\subset \pi_P(C)$. 
	Assume that one of the $m_i=1$. 
	Then the image of $C$ in $\mb F_{e,P_i}$ is a smooth rational
	curve $C'$ such that $C'^2=-1$. 
	Moreover, $C$ is the strict transform of the curve $C'$. Thus,
	in this case we are done using Lemma \ref{lem1 -1} and Lemma \ref{lem r+3 C^2=-1}.
	
	Thus, we may further assume that $m_i\geqslant 2$. 
	The conditions $K_{e,P} \cdot C=0$
	and $C^2=-2$ give the following two equations
	\begin{align}\label{e1}
		\sum_{i=1}^{r} m_i&=2b+2a-ae,\\
		\sum_{i=1}^{r} m_i^2&=2ab -a^2e+2.\nonumber
	\end{align}
	
	Similar to the earlier cases, we may assume that 
	$a>0$ and $b\geqslant ae$. Let $C_1$ 
	denote the image of $C$ in $\mb F_{e,P_1}$. Then it is clear that 
	$C_1^2=C^2+m_1^2=-2+m_1^2\geqslant 2$. As $C_1$ is an 
	integral curve such that $C_1^2\geqslant 0$,
	it follows that if $D$ is any effective divisor then $C_1 \cdot D\geqslant 0$. 
	Note that the class of $C_1$ is given by 
	$$aC_e+bf_e-\sum_{i=2}^{r}m_iE_i\,.$$
	Consider the line bundle $D_e=C_e+(e+\lambda)f_e$ on $\mb F_e$, where $\lambda\geqslant 0$
	is an integer. Using Proposition \ref{dim} we see
	that $h^0(\mb F_e, D_e)= e+2+2\lambda$. Taking $\lambda=1$
	we see that there is an effective divisor $D\subset \mb F_e$ passing through 
	the $e+3$ points in the set $P_1$ and linearly equivalent to $D_e$. 
	Thus, if $r\leqslant e+4$, then we can find an effective divisor 
	$D\subset \mb F_e$ passing through 
	the $r-1$ points in the set $P_1$ and linearly equivalent to $D_e$.
	Thus, the strict transform of $D$
	on $\mb F_{e,P_1}$ is of the form 
	$C_e + (e+1)f_e-\sum_{i=2}^{r} n_iE_i$, where all $n_i\geqslant 1$.
	Using $C_1 \cdot D\geqslant0$ we get 
	$$-ae+a(e+1)+b-\sum_{i=2}^{r}n_im_i\geqslant 0\,.$$
	Thus, we get 
	\begin{align*}
		\sum_{i=1}^{r}m_i\leqslant m_1+\sum_{i=2}^{r}n_im_i\leqslant 
		b+a+m_1\leqslant b+2a\,.
	\end{align*}
	Combining this with equation \eqref{e1}, we get 
	$$2b+2a-ae=\sum_{i=1}^{r}m_i\leqslant b+2a\,.$$
	This shows that $b\leqslant ae$. It follows that $b=ae$. 
	Putting this into equation \eqref{e1} we get 
	\begin{equation}\label{e2}
		\sum_{i=1}^{r}m_i=a(e+2)\,,\qquad \qquad \sum_{i=1}^{r}m_i^2=a^2e+2\,.
	\end{equation}
	If $r\leqslant e+3$, then using the Cauchy-Schwarz inequality we get 
	$$a^2(e+2)^2\leqslant (a^2e+2)(r)\leqslant (a^2e+2)(e+3)\,.$$
	From this we get $a^2(e+4)\leqslant 2e+6$. 
	From this we easily conclude that $a=1$, but this is a contradiction 
	as we assumed 
	all $1=a\geqslant m_i\geqslant 2$. This completes the proof of the 
	Lemma when $r\leqslant e+3$. 
	
	It remains to consider the case $r=e+4$. From
	\eqref{e2}, we get 
	$$\sum_{i=1}^{e+4}\frac{m_i}{a}\Big(1-\frac{m_i}{a}\Big)=\frac{2(a^2-1)}{a^2}\,.$$
	First assume that each $1\leqslant m_i<a$. Then we get 
	$$(e+4)\frac{(a-1)}{a^2}\leqslant \frac{2(a^2-1)}{a^2}\,.$$
	This gives $e+4\leqslant 2(a+1)$. Applying Cauchy-Schwarz inequality to 
	\eqref{e2} gives $2a^2\leqslant e+4\leqslant 2(a+1)$. This shows that $a=1$,
	which gives a contradiction as $1=a\geqslant m_i\geqslant 2$. The inequality
	$2a^2\leqslant e+4$ also shows that if $e\leqslant 3$, then $a=1$, which is again a 
	contradiction. Thus, for the rest of the proof we may restrict our attention 
	to the case $e\geqslant 4$.
	
	The same argument as in the paragraph containing \eqref{elementary-transformation}, 
	shows that $g(C)$ is a $(-2)$-curve on $\mb F_{e-1,P'}$ which does 
	not meet the point being blown up by $g$. Moreover, it is easily checked that 
	if $C=aC_e+aef_e-aE_1-\sum_{i=2}^{e+4}m_iE_i$, then 
	$$g(C)=aC_{e-1}+a(e-1)f_{e-1}-\sum_{i=2}^{e+4}m_iE_i\,.$$
	The points in $P'$ satisfy the same property, they are in different fibers 
	of $\pi'$. Moreover, the cardinality of $P'$ is $e+3=(e-1)+4$. Thus, repeating 
	the above process each time one of the $m_i$ equals $a$, we either 
	reach the case $e=3$, or we reach a case where all the $m_i$ satisfy 
	$1\leqslant m_i<a$. In both cases, we get a contradiction, as observed above. 
	This completes the proof of the Lemma. 
\end{proof}

\section{Fixed Components of anticanonical divisors}\label{Fixed Component e+3} 

Consider the line bundle $L=C_e+(e+2)f_e$ on $\mb F_e$. This is a very ample
line bundle with $h^0(\mb F_e, L)=e+6$ (using Proposition \ref{dim}). Two general sections define two smooth curves 
meeting transversally at $L^2$ points, that is, $e+4$ points. Conversely, if we take $r\leqslant e+4$ general 
points, then it is easily checked that:\\\\
$(\dagger)$ the subspace $V\subset H^0(\mb F_e, L)$ 
consisting of sections vanishing at these points is $e+6-r$ dimensional, and
the base locus of $V$ is the set of these $r$ points with the reduced scheme structure.\\\\
Thus, it follows that on $\mb F_{e,P}$, the line bundle $C_e+(e+2)f_e-\sum_{i=1}^rE_i$
is base point free when $r\leqslant e+4$ and $P$ is a set consisting of $r$
points in general position. 

Let $P'=\{p_1,\ldots,p_r,x\}\subset \mb F_e$ be $r+1\leqslant e+4$ points in 
general position. Let $P=\{p_1,\ldots,p_r\}$. We write 
$$-K_{\mb F_{e,P'}}=C_e+C_e+(e+2)f_e-\sum_{i=1}^rE_i-E_x\,.$$
Let $D=D_1+D_2$ where $D_1$ and $D_2$ are effective divisors. Then the 
base locus of $D$ is a subset of the union of the base locus of 
$D_1$ and the base locus of $D_2$. The base locus of $C_e+(e+2)f_e-\sum_{i=1}^rE_i-E_x$
is empty. Thus, the only possible components of the base locus of $-K_{\mb F_{e,P'}}$
are the components of $C_e$. Recall the definition of $\Delta$ from 
Definition \ref{defDelta}. 
If $e>0$, then since none of the points of $P'$
are on $C_e$, it follows that there are no curves in $\Delta$ of the third kind. 
If $e=0$ then $C_e$ is base point free, and so again, there are no curves 
in $\Delta$ of the third kind.

\section{Computation of Seshadri Constants}\label{SCcomp}
In this section we will compute Seshadri constants at a very general point 
on the Hirzebruch surface $\mb F_e$ blown up at $r\leqslant e+3$ points. 
In order to apply the strategy mentioned in Section \ref{preliminaries section}, 
we will need that 
the surface $\mb F_e$ blown up at $r\leqslant e+5$ points has an effective 
anticanonical divisor, for example, see \cite[Proposition 3]{FL2021}. 

Recall the subset $T$ from Proposition \ref{general position}. 
We will say that points $\{p_1,\ldots,p_r\}$ are in very general position
if all the following conditions are satisfied: 
the tuple $(p_1,\ldots,p_r)\in T$, the condition given in 
\S\ref{Fixed Component e+3}$(\dagger)$ holds, the points $p_i$
are in different fibers of the map $\pi$, none of these points are on 
$C_e$ when $e>0$, and when $e=0$ ($\mb F_e=\mb P^1\times \mb P^1$) no two 
of them are in the same vertical or horizontal fibers. It is not difficult 
to see that the set of points satisfying these conditions is the intersection
of $T$ with a nonempty Zariski open subset. 
\begin{theorem}\label{SC e+2}
	For $r=e+2$, let $P= \{p_1,\dots,p_r\}\subset \mathbb{F}_e$ be a set of $r$ distinct points. Let $x \in \mathbb{F}_{e,P}$ be such that $P'=P\cup \{\pi_{P}(x)\}$ be a set of $r+1$ points which are in very general position. Let 
	$L=\alpha C_e+\beta f_e-\sum_{i=1}^r\mu_iE_i$ be an ample line 
	bundle on $\mathbb{F}_{e,P}$. The
	Seshadri constant of $L$ at $x$ is
	$$\varepsilon(\mathbb{F}_{e,P},L,x)=
	\min \left( \alpha,\,\, \beta-\sum_{j=1}^e \mu_{i_j}, \,\,\beta+\alpha-\sum_{i=1}^r\mu_i\right),$$
	where the sum in $\beta-\sum_{j=1}^e \mu_{i_j}$ is taken over the largest $e$ $\mu_i$.
	When $e=0$ this sum is to be taken as $\beta$. 
\end{theorem}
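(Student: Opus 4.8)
The plan is to pass to the blowup and apply Proposition~\ref{SC computation-1}. Write $\widetilde{X}$ for the blowup of $X=\mathbb{F}_{e,P}$ at $x$; since $\pi_P(x)\notin P$, this is the blowup $\mathbb{F}_{e,P'}$ of $\mathbb{F}_e$ at the $r+1=e+3$ points of $P'$, with $E_x$ the exceptional divisor over $\pi_P(x)$. As $e+3\leqslant e+5$, the class $-K_{\widetilde X}$ is effective by \cite[Proposition~3]{FL2021}, so the hypotheses of Proposition~\ref{SC computation-1} hold and
$$\varepsilon(\mathbb{F}_{e,P},L,x)=\sup\{s\geqslant 0\,|\,(\pi^\ast L-sE_x)\cdot \widetilde C\geqslant 0 \text{ for all }\widetilde C\in\Delta\}\,.$$
By the analysis of Section~\ref{Fixed Component e+3}, $\Delta$ has no curves of the third kind, so it consists exactly of the integral $(-1)$- and $(-2)$-curves on $\widetilde X$ passing through $x$. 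Since $E_x\cdot\widetilde C=\mathrm{mult}_x\widetilde C$ and $\pi^\ast L$ meets $E_x$ trivially, the whole computation reduces to listing these curves and evaluating $\pi^\ast L\cdot\widetilde C$ divided by the multiplicity at $x$.

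First I would classify the possible classes. By Lemma~\ref{lem r+3 C^2=-1} and Lemma~\ref{-2 curves} applied with $e+3$ points, any such curve has image $aC_e+bf_e$ with $(a,b)\in\{(0,1),(1,0),(1,e),(1,e+1)\}$. In each case the genus and self-intersection identities \eqref{sum_mi_r (-1)}, \eqref{sum_mi^2 _r (-1)} (for $(-1)$-curves) and \eqref{e1} (for $(-2)$-curves) give $\sum m_i=\sum m_i^2$, which forces every $m_i\in\{0,1\}$ and fixes the combinatorics: the class $(0,1)$ yields the fibre transforms $f_e-E_i$, a $(-1)$-curve of class $(1,e)$ meets exactly $e+1$ of the points, and a $(-1)$-curve of class $(1,e+1)$ meets all $e+3$. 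Intersecting with $\pi^\ast L$, the three of these that pass through $x$, namely $f_e-E_x$, the strict transform of a curve in $|C_e+ef_e|$ through $x$ and $e$ of the $p_i$, and the strict transform of a curve in $|C_e+(e+1)f_e|$ through all the points, contribute respectively $\alpha$, $\beta-\sum_{i\in S}\mu_i$ (with $|S|=e$), and $\alpha+\beta-\sum_{i=1}^r\mu_i$; minimising the middle quantity over $S$ selects the largest $e$ of the $\mu_i$ and produces $\beta-\sum_{j=1}^{e}\mu_{i_j}$. When $e=0$ the classes $(1,0)$ and $(1,e)$ coincide and this middle term degenerates to $\beta$, which is why the statement records that case separately.

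The crux, and the step I expect to be the main obstacle, is to show that no further integral negative curve through $x$ yields a smaller ratio. The one genuinely dangerous candidate is a $(-2)$-curve of class $(1,e)$: the multiplicity analysis forces it to pass through $e+2$ of the $e+3$ points, hence through $x$ and $e+1$ of the $p_i$, and it would contribute $\beta$ minus the sum of the largest $e+1$ of the $\mu_i$, strictly smaller than the middle value above. To rule it out I would invoke Proposition~\ref{general position} together with $h^0(\mathbb{F}_e,C_e+ef_e)=e+2$ from Proposition~\ref{dim}: the $e+2$ points lie in the very general set $T$, so a curve of $|C_e+ef_e|$ through all of them would require $e+2<e+2$, a contradiction. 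Thus this curve does not exist for $P'$ in very general position. The remaining $(-2)$-classes cause no trouble, since $(0,1)$ and $(1,e+1)$ are numerically impossible over $e+3$ points and $(1,0)$ is $C_e$, which misses $x$. This is precisely where the very general position hypothesis is indispensable.

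It then remains to assemble the two inequalities. For the lower bound I set $s$ equal to the minimum of the three displayed quantities; by the classification every $\widetilde C\in\Delta$ lies in one of the three families and imposes a constraint $s\leqslant(\text{one of the three values})$, so $(\pi^\ast L-sE_x)\cdot\widetilde C\geqslant 0$ throughout and the supremum above is at least this minimum. For the upper bound I note that each of the three classes is effective — for instance $h^0(\widetilde X,C_e+(e+1)f_e-\sum_{i=1}^rE_i-E_x)\geqslant(e+4)-(e+3)=1$ by Proposition~\ref{dim}, and similarly for the other two — so enlarging $\Delta$ to a set $\Lambda$ of effective divisors that includes these three representatives and applying Proposition~\ref{SC Computation} bounds $\varepsilon$ above by each of $\alpha$, $\beta-\sum_{j=1}^e\mu_{i_j}$ and $\alpha+\beta-\sum_{i=1}^r\mu_i$. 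Combining the two inequalities gives the asserted equality.
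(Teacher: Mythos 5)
Your proposal is correct and follows essentially the same route as the paper: classify the candidate classes via Lemma~\ref{lem r+3 C^2=-1} and Lemma~\ref{-2 curves} applied to the $e+3$ points of $P'$, use Section~\ref{Fixed Component e+3} to exclude base-locus curves, kill the dangerous $(-2)$-class $(1,e)$ through $e+2$ very general points via Proposition~\ref{general position} and $h^0(\mathbb F_e,C_e+ef_e)=e+2$, and read off the minimum. The only (harmless) deviation is that you justify the upper bound by enlarging $\Delta$ to a set $\Lambda$ of effective divisors and invoking Proposition~\ref{SC Computation}, where the paper simply asserts that the three classes lie in $\Delta$ and applies Proposition~\ref{SC computation-1} directly; your variant is, if anything, slightly more careful about the integrality of the representatives.
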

\begin{proof}
	Let $\pi_x: \mathbb{F}_{e,P'} \to \mathbb{F}_{e,P}$ be the 
	blowup of $\mathbb{F}_{e,P}$ at $x$. Let $E_x$ denote the exceptional 
	divisor corresponding to $\pi_x$.
	Using Lemma \ref{lem r+3 C^2=-1}, Lemma \ref{-2 curves} and the discussion in 
	section \ref{Fixed Component e+3},
	we write down members of the set $\Delta$ defined in Definition \ref{defDelta}. From the discussion in 
	section \ref{Fixed Component e+3} it follows that $\Delta$ contains no curves 
	of the third kind. 
	Let $C\subset \mathbb{F}_{e,P}$ be an integral curve through $x$ such 
	that the strict transform $\widetilde{C}$ satsifies:
	$\widetilde{C}$ is a $(-1)$-curve, or a $(-2)$-curve.
	
	Let $\widetilde{C}= aC_e+bf_e-\sum_{i=1}^rm_iE_i-m_xE_x$. Then $m_i\geqslant 0$, $m_x>0$ and $(a,b)\neq (0,0)$. 
	If $\widetilde{C}$ is a $(-1)$-curve then we apply Lemma \ref{lem r+3 C^2=-1}. 
	\begin{itemize}
		\item $(a,b)=(0,1)$. We conclude that $\widetilde{C}$ is the strict transform of a fiber of 
		$\pi$ (recall the map $\pi$ from \eqref{def-pi}). Since $p_i,x$ are in very general position, 
		it easily follows that $\widetilde{C}$ is the strict transform of the fiber of $\pi$ 
		through the point $x$. Thus, $\widetilde{C}=f_e-E_x \in \Delta$. 
		\item $(a,b)=(1,0)$. Here $\widetilde{C}$ is strict transform of $C_e$. If $e\geqslant 1$, since we are assuming 
		that the point $x$ is not in $C_e$, this case is not possible. If $e=0$, then this case is possible
		and $\widetilde{C}=C_e-E_x \in \Delta$.
		\item $(a,b)=(1,e)$. Note that $h^0(\mb F_e, C_e+ef_e)=e+2$. Thus, the curve $C=\pi_P(\widetilde{C})$
		can pass through at most $e+1$ points in $P'$, one of them being $x$. The multiplicity through each 
		point is forced to 1 as $m_i\leqslant a=1$. Thus, $C$ has to pass through exactly $e$ points in $P$ 
		and through $x$, for $\widetilde{C}^2$ to be $-1$. Thus, the possibilities are 
		$\widetilde{C}=C_e+ef_e-\sum_{j=1}^eE_{i_j}-E_x \in \Delta$.
		\item $(a,b)=(1,e+1)$. Arguing as in the previous case, we see that 
		$\widetilde{C}=C_e+(e+1)f_e-\sum_{i=1}^rE_{i}-E_x \in \Delta$.
	\end{itemize}
	If $\widetilde{C}$ is a $(-2)$-curve then we apply Lemma \ref{-2 curves}.
	\begin{itemize}
		\item $(a,b)=(0,1)$. As before, $\widetilde{C}$ is forced to be strict 
		transform of a fiber of $\pi$.
		But this case does not occur as $\widetilde{C}^2=-1$. 
		\item $(a,b)=(1,0)$. Again, $\widetilde{C}$ is the strict transform 
		of $C_e$. If $e\geqslant 1$ then this 
		case is not possible as we have assumed that $x\notin C_e$. If $e=0$, 
		then there is a unique $C_e$ 
		through the point $x$, however, this does not contain any other $p_i$. 
		Thus, $\widetilde{C}^2=-2$ is 
		not possible. 
		\item $(a,b)=(1,e)$. Since $h^0(\mb F_e, C_e+ef_e)=e+2$ and points in $P'$ are in very general 
		position, any curve $D\in |C_e+ef_e|$ can pass through at most $e+1$ points of $P'$. 
		But to have $\widetilde{C}^2=-2$, this has to pass through $e+2$ points, 
		which is not possible  by Proposition \ref{general position}.
		\item $(a,b)=(1,e+1)$. Arguing as in the previous case we see that $\widetilde{C}^2=-2$
		is not possible in this case. 
	\end{itemize}
	From the above discussion, we conclude that for $e>0$
	$$\Delta=\{f_e-E_x,\,\,C_e+ef_e-\sum_{j=1}^eE_{i_j}-E_x,\,\,C_e+(e+1)f_e-\sum_{i=1}^rE_{i}-E_x\}\,.$$
	If $e=0$, then 
	$$\Delta=\{f_e-E_x,\,\,C_e-E_x,\,\,C_e+f_e-\sum_{i=1}^rE_{i}-E_x\}\,.$$
	From Proposition \ref{SC computation-1} it follows that when $e>0$ we have 
	$$\varepsilon(\mathbb{F}_{e,P},L,x)=\min \left( \alpha,\,\, \beta-\sum_{j=1}^e \mu_{i_j},\,\, \beta+\alpha-\sum_{i=1}^r\mu_i\right)\,,$$
	where the sum in $\beta-\sum_{j=1}^e \mu_{i_j}$ is over the largest $e$ $\mu_i$. When $e=0$ we have 
	$$\varepsilon(\mathbb{F}_{e,P},L,x)=\min \left( \alpha,\,\, \beta-\alpha e,\,\,\beta-\sum_{j=1}^e \mu_{i_j}, \,\, \beta+\alpha-\sum_{i=1}^r\mu_{i}\right)=
	\min \left( \alpha, \,\, \beta,\,\,\beta+\alpha-\sum_{i=1}^r\mu_i\right)\,.$$
	The second equality holds when $e=0$, as $\beta-\alpha e=\beta$ and $\beta-\sum_{j=1}^e \mu_{i_j}=\beta$ as 
	the sum is over the largest $e$ $\mu_i$. 
	This completes the proof of the Theorem. 
\end{proof}
For $r=e+3$, let $P= \{p_1,\dots,p_r\}\subset \mathbb{F}_e$ be a set of $r$ distinct points. Let $x \in \mathbb{F}_{e,P}$ be such that $P'=P\cup \{\pi_{P}(x)\}$ be a set of $r+1$ points which are in very general position.
Let 
$L=\alpha C_e+\beta f_e-\sum_{i=1}^r\mu_iE_i$ be an ample line 
bundle on $\mathbb{F}_{e,P}$. 
Define
\begin{align}\label{def A(L)}
	A(L)&=\min_{2\leqslant  a\leqslant \frac{e+3}{2}}\left\{\frac{ a\beta-\sum_{j=1}^{2a}(a-1)\mu_{i_j}-\sum_{k=1}^{e-2a+3}a\mu_{i_k}}{a-1}\middle| \{i_j\}\cup\{i_k\}=\{1,2,\dots,r\}\right\}\,,\\
	B(L)&=\min_{2\leqslant a\leqslant\frac{e+2}{2}}\left\{\frac{ a\beta-\sum_{j=1}^{2a+1}(a-1)\mu_{i_j}-\sum_{k=1}^{e-2a+2}a\mu_{i_k}}{a}\middle| \{i_j\}\cup\{i_k\}=\{1,2,\dots,r\}\right\}\,.\nonumber
\end{align}

\begin{theorem}\label{SC e+3}
	With notation as above, the
	Seshadri constant of $L$ at $x$ is
	\begin{align*}
		\varepsilon(\mathbb{F}_{e,P},L,x)&=\min \left( \alpha,\,\, \beta,\,\, \beta+\alpha-\sum_{j=1}^{2}\mu_{i_j}\right)\,,\qquad \qquad \text{for $e=0$}\,,\\
		\varepsilon(\mathbb{F}_{e,P},L,x)&=\min \left( \alpha,\,\,\beta-\sum_{j=1}^e \mu_{i_j}, \,\, \beta+\alpha-\sum_{i=1}^{e+2}\mu_{i_j},\,\, A(L), \,\,B(L)\right)\,,\qquad \qquad \text{for $e>0$}\,,
	\end{align*}
	where the sum in $\beta+\alpha-\sum_{j=1}^{2}\mu_{i_j}$ is over the two largest $\mu_i$, 
	the sum in $\beta+\alpha-\sum_{j=1}^{e}\mu_{i_j}$ is over the largest $e$ $\mu_i$,
	and the sum in $\beta+\alpha-\sum_{j=1}^{e+2}\mu_{i_j}$ is over the largest $e+2$ $\mu_i$.
\end{theorem}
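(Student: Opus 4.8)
The plan is to apply the strategy of Proposition \ref{SC Computation}. Let $\pi_x:\mathbb{F}_{e,P'}\to\mathbb{F}_{e,P}$ be the blowup at $x$ with exceptional divisor $E_x$; then $\mathbb{F}_{e,P'}$ is the blowup of $\mathbb{F}_e$ at the $r+1=e+4$ very general points of $P'$, so the integral $(-1)$-curves and $(-2)$-curves on it are classified by Lemma \ref{C^2=-1 e+4} and Lemma \ref{-2 curves} respectively. As in Section \ref{Fixed Component e+3}, since $x\notin C_e$ (and $C_e$ is base point free when $e=0$), the base locus of $-K_{\mathbb{F}_{e,P'}}$ contains no curve through $x$ of negative self-intersection, so $\Delta$ has no curves of the third kind. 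I would take $\Lambda$ to be the finite set of all effective classes produced by these lemmas whose strict transform passes through $x$ (i.e.\ with $m_x>0$); the exceptional classes of Lemma \ref{C^2=-1 e+4}\ref{e+4, exceptional type 1} are effective by Remark \ref{existence -1 e+4 last case}, so $\Lambda$ is a legitimate set of effective divisors containing $\Delta$, and Proposition \ref{SC Computation} applies even though their irreducibility is unknown.

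For any $\widetilde C=aC_e+bf_e-\sum_i m_iE_i-m_xE_x\in\Lambda$ with $m_x>0$, the inequality $(\pi_x^\ast L-sE_x)\cdot\widetilde C\geqslant 0$ reads $s\leqslant(\pi_x^\ast L\cdot\widetilde C)/m_x$, where $\pi_x^\ast L\cdot\widetilde C=\alpha(b-ae)+a\beta-\sum_i\mu_i m_i$. I would feed the four ``linear'' $(-1)$-classes of Lemma \ref{C^2=-1 e+4} into this. The class $f_e-E_x$ (type (a), the fibre through $x$) gives $s\leqslant\alpha$; the class $C_e+ef_e-\sum_{j=1}^e E_{i_j}-E_x$ (type (c), through $x$ and $e$ of the $p_i$, the multiplicities being forced to $1$ since $m_i\leqslant a=1$) gives $s\leqslant\beta-\sum_{j=1}^e\mu_{i_j}$ over the largest $e$ of the $\mu_i$; and $C_e+(e+1)f_e-\sum_{j=1}^{e+2}E_{i_j}-E_x$ (type (d)) gives $s\leqslant\alpha+\beta-\sum_{j=1}^{e+2}\mu_{i_j}$ over the largest $e+2$. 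Type (b) ($C_e$) is excluded for $e>0$ as $x\notin C_e$. The $(-2)$-classes of Lemma \ref{-2 curves}, having $a\leqslant 1$ and hence all multiplicities at most $1$, would need to pass through one more point than the general-position bound on $h^0$ permits, exactly as in the proof of Theorem \ref{SC e+2}, so they impose no constraint.

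The new feature when $r=e+3$ is the exceptional family of Lemma \ref{C^2=-1 e+4}\ref{e+4, exceptional type 1}, with $b=ae$, multiplicities in $\{a-1,a\}$, and $2\leqslant a\leqslant(e+3)/2$; here $\pi_x^\ast L\cdot\widetilde C=a\beta-\sum_i\mu_i m_i$, and I separate according to the multiplicity $m_x$ at the distinguished point $x$. If $m_x=a-1$, then among $p_1,\dots,p_r$ exactly $2a$ points carry multiplicity $a-1$ and $e-2a+3$ carry multiplicity $a$, and the inequality becomes
\[
s\leqslant\frac{a\beta-(a-1)\sum_{j=1}^{2a}\mu_{i_j}-a\sum_{k=1}^{e-2a+3}\mu_{i_k}}{a-1};
\]
minimizing over $a$ and over the partition $\{i_j\}\cup\{i_k\}=\{1,\dots,r\}$ yields exactly $A(L)$. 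If instead $m_x=a$, then $2a+1$ of the $p_i$ carry multiplicity $a-1$ and $e-2a+2$ carry multiplicity $a$ (forcing $a\leqslant(e+2)/2$), and the inequality becomes
\[
s\leqslant\frac{a\beta-(a-1)\sum_{j=1}^{2a+1}\mu_{i_j}-a\sum_{k=1}^{e-2a+2}\mu_{i_k}}{a},
\]
whose minimum is $B(L)$. Collecting all constraints in $\Lambda$ and invoking Proposition \ref{SC Computation} gives $\varepsilon(\mathbb{F}_{e,P},L,x)=\min(\alpha,\,\beta-\sum_{j=1}^e\mu_{i_j},\,\alpha+\beta-\sum_{j=1}^{e+2}\mu_{i_j},\,A(L),\,B(L))$ for $e>0$.

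Finally, for $e=0$ the range $2\leqslant a\leqslant 3/2$ is empty, so the exceptional family, and with it $A(L)$ and $B(L)$, disappears; the surface is $\mathbb{P}^1\times\mathbb{P}^1$ and the two rulings $f_e-E_x$ and $C_e-E_x$ contribute $s\leqslant\alpha$ and $s\leqslant\beta$, while $C_e+f_e-E_{i_1}-E_{i_2}-E_x$ contributes $s\leqslant\alpha+\beta-\sum_{j=1}^2\mu_{i_j}$ over the two largest $\mu_i$, giving the stated formula. I expect the main obstacle to be the bookkeeping of the exceptional family: one must correctly split the $e+4$ multiplicities into the two cases $m_x=a-1$ and $m_x=a$, track the resulting counts $2a,\,2a+1$ and $e-2a+3,\,e-2a+2$, match the admissible ranges of $a$, and confirm via Remark \ref{existence -1 e+4 last case} that these classes are effective so that Proposition \ref{SC Computation} legitimately applies.
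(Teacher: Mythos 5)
Your proposal is correct and follows essentially the same route as the paper: reduce to the finite list of $(-1)$- and $(-2)$-classes from Lemma \ref{C^2=-1 e+4} and Lemma \ref{-2 curves}, observe via Section \ref{Fixed Component e+3} that $\Delta$ has no curves of the third kind, and pass to a set $\Lambda$ of effective (possibly non-integral) classes so that Proposition \ref{SC Computation} applies despite the unresolved irreducibility of the exceptional family, whose two subcases $m_x=a-1$ and $m_x=a$ yield $A(L)$ and $B(L)$ exactly as in the paper. The bookkeeping of the multiplicity splits $(2a,\,e-2a+3)$ versus $(2a+1,\,e-2a+2)$ and the ranges of $a$ all match the paper's argument.
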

\begin{proof}
	As the proof is very similar to that of Theorem \ref{SC e+2}, we will only sketch
	the main points, leaving the details to the reader. 
	Let $\widetilde{C}= aC_e+bf_e-\sum_{i=1}^rm_iE_i-m_xE_x$. Then 
	$m_i\geqslant 0$, $m_x>0$ and $(a,b)\neq (0,0)$. 
	If $\widetilde{C}$ is a $(-1)$-curve then we apply Lemma \ref{C^2=-1 e+4}. 
	\begin{itemize}
		\item $(a,b)=(0,1)$. Same reasoning as before shows $\widetilde{C}=f_e-E_x \in \Delta$. 
		\item $(a,b)=(1,0)$. Here $\widetilde{C}$ is strict transform of $C_e$. If 
		$e\geqslant 1$, since we are assuming that the point $x$ is not in $C_e$, this case is 
		not possible. If $e=0$, then this case is possible
		and $\widetilde{C}=C_e-E_x \in \Delta$.
		\item $(a,b)=(1,e)$. Same reasoning as before shows
		$\widetilde{C}=C_e+ef_e-\sum_{j=1}^eE_{i_j}-E_x \in \Delta$.
		\item $(a,b)=(1,e+1)$. The curve $C$ can pass through only $e+3$ 
		points, of which one has to be $x$. We see that 
		$\widetilde{C}=C_e+(e+1)f_e-\sum_{j=1}^{e+2}E_{i_j}-E_x \in \Delta$.
		\item Finally we need to consider possible curves in 
		Lemma \ref{C^2=-1 e+4}\ref{e+4, exceptional type 1}.
		In this case we have $e\geqslant 1$, $2\leqslant a\leqslant (e+3)/2$. 
		By Remark \ref{existence -1 e+4 last case}, we know that there is an effective divisor 
		$C\subset \mb F_{e,P}$, passing through $x$, whose multiplicity at 
		$x$ is either $a$ or is $a-1$. 
		The multiplicity can be $a$ only when $e>2a-3$. 
		However, it is not clear if this curve is irreducible or reduced. 
		Thus, for the case $r=e+3$
		we will use a set $\Lambda$ and Proposition \ref{SC Computation}. 
		If $e=2a-3$ the only possible class in $\Lambda$
		coming from this case is: 
		$$aC_e+aef_e-\sum_{j=1}^{2a}(a-1)E_{i_j}-(a-1)E_x\,.$$
		If $e>2a-3$, then $\Lambda$ will contain the following two classes: 
		\begin{itemize}
			\item[$\blacktriangle$] $aC_e+aef_e-\sum_{j=1}^{2a}(a-1)E_{i_j}-\sum_{k=1}^{e-2a+3}aE_{i_k}-(a-1)E_x$
			\item[$\blacktriangle$] $aC_e+aef_e-\sum_{j=1}^{2a+1}(a-1)E_{i_j}-\sum_{k=1}^{e-2a+2}aE_{i_k}-aE_x$
		\end{itemize}
	\end{itemize}
	If $\widetilde{C}$ is a $(-2)$-curve then we apply Lemma \ref{-2 curves}.
	\begin{itemize}
		\item $(a,b)=(0,1)$. As before, $\widetilde{C}$ is forced to 
		be strict transform of a fiber of $\pi$. 
		But this case does not occur as $\widetilde{C}^2=-1$. 
		\item $(a,b)=(1,0)$. Again, $\widetilde{C}$ is the strict transform 
		of $C_e$. If $e\geqslant 1$ then this case is not possible as we have 
		assumed that $x\notin C_e$. If $e=0$, then there is a unique $C_e$ 
		through the point $x$, however, this does not contain any 
		other $p_i$. Thus, $\widetilde{C}^2=-2$ is not possible. 
		\item $(a,b)=(1,e)$. Since $h^0(\mb F_e,C_e+ef_e)=e+2$ and points in 
		$P'$ are in very general position, any curve $C\in |C_e+ef_e|$ can pass 
		through at most $e+1$ points of $P'$. But then 
		$\widetilde{C}^2\geqslant -1$. 
		\item $(a,b)=(1,e+1)$. Since $h^0(\mb F_e,C_e+(e+1)f_e)=e+4$, the 
		curve $C$ can pass through at most $e+3$ points in $P'$. However, 
		in this case we get $\widetilde{C}^2\geqslant -1$. 
	\end{itemize}
	From the above discussion we conclude that for $e=0$ 
	$$\Lambda=\{f_e-E_x,\,\,C_e-E_x,\,\,C_e+(e+1)f_e-\sum_{j=1}^{e+2}E_{i_j}-E_x \}\,.$$
	If $e\geqslant 1$ then 
	\begin{align*}
		\Lambda=\Bigg\{f_e-E_x,\,\,&C_e+ef_e-\sum_{j=1}^eE_{i_j}-E_x,\,\,
		C_e+(e+1)f_e-\sum_{j=1}^{e+2}E_{i_j}-E_x\Bigg\}\bigcup\\
		& \Bigg\{aC_e+aef_e-\sum_{j=1}^{2a}(a-1)E_{i_j}-\sum_{k=1}^{e-2a+3}aE_{i_k}-(a-1)E_x\,\,\vert\,\,2\leqslant a\leqslant(e+3)/2\Bigg\}\bigcup\\
		&\Bigg\{aC_e+aef_e-\sum_{j=1}^{2a+1}(a-1)E_{i_j}-\sum_{k=1}^{e-2a+2}aE_{i_k}-aE_x\,\,\vert\,\,2\leqslant a\leqslant(e+3)/2\Bigg\}\,.
	\end{align*}
	Now we apply Proposition \ref{SC Computation}. When $e=0$ we have 
	$$\varepsilon(\mathbb{F}_{e,P},L,x)=\min \left( \alpha,\,\, \beta,\,\, \beta+\alpha-\sum_{j=1}^{2}\mu_{i_j}\right)\,,$$
	where the sum in $\beta+\alpha-\sum_{j=1}^{2}\mu_{i_j}$ is over 
	the two largest $\mu_i$. When $e>0$ we have 
	$$\varepsilon(\mathbb{F}_{e,P},L,x)=\min \left( \alpha,\,\,\beta-\sum_{j=1}^e \mu_{i_j}, \,\, \beta+\alpha-\sum_{i=1}^{e+2}\mu_i,\,\, A(L),\,\, B(L)\right)\,,$$
	where the sum in $\beta+\alpha-\sum_{j=1}^{e}\mu_{i_j}$ 
	is over the largest $e$ $\mu_i$ 
	and the sum in $\beta+\alpha-\sum_{j=1}^{e+2}\mu_{i_j}$ 
	is over the largest $e+2$ $\mu_i$. 
	This completes the proof of the Theorem. 
\end{proof}

\begin{remark}(\cite[Question 3.12]{HJNS24}
	The Seshadri constants of ample line bundles on $\mathbb{F}_{e,P}$ at very general points are always integers when 
	$r \leqslant e + 2$, where $P= \{p_1,\dots,p_r\}\subset \mathbb{F}_e$ 
	be a set of very general points. When $r=e+3$, the Seshadri constant 
	can possibly be non-integer but still are rational numbers.  
	
\end{remark}

\begin{theorem}\label{Conj 4.8 HJNS}
	Let $P=\{p_1,\dots,p_r\}\subset \mb F_e$ be $r \leqslant e+4$ 
	points in very general  position. Recall that $\pi_P: \mathbb{F}_{e,P} \to \mathbb{F}_e$ 
	is the blowup of $\mathbb{F}_e$ at the points in $P$. Let $C$ be an integral curve on 
	$\mathbb{F}_{e,P}$ such that $C^2<0$. Then $C$ is either the strict transform of 
	$C_e$ or a $(-1)$-curve.
\end{theorem}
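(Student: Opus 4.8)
The plan is to split according to whether $C$ lies in the base locus of the anticanonical divisor. Since $r\leqslant e+4\leqslant e+5$, we have $H^0(\mathbb F_{e,P},-K_{e,P})\neq 0$ by \cite[Proposition 3]{FL2021}, so the positivity argument from the proof of Proposition \ref{usefulprop-1} is available. First I would dispose of the trivial case $\pi_P(C)=\mathrm{pt}$, where $C=E_i$ is a $(-1)$-curve, and assume henceforth that $C$ is the strict transform of an integral curve of $\mathbb F_e$.

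Next I would analyse the base locus. Writing $-K_{e,P}=C_e+\bigl(C_e+(e+2)f_e-\sum_{i=1}^rE_i\bigr)$ and using that the second summand is base point free for $r\leqslant e+4$, the argument of Section \ref{Fixed Component e+3} shows that every component of the base locus of $-K_{e,P}$ is a component of $C_e$. When $e>0$ this locus is $C_e$ itself (as $h^0(\mathbb F_e,C_e)=1$), and when $e=0$ it is empty. Hence if $C$ is a fixed component of $-K_{e,P}$, then necessarily $e>0$ and $C=\widetilde{C}_e$, which is one of the allowed outcomes.

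If instead $C$ is not a fixed component, then a general member of $|-K_{e,P}|$ does not contain $C$, giving $-K_{e,P}\cdot C\geqslant 0$, i.e. $K_{e,P}\cdot C\leqslant 0$. Combining the adjunction formula $2p_a(C)-2=C^2+K_{e,P}\cdot C$ with $C^2\leqslant -1$, $K_{e,P}\cdot C\leqslant 0$ and $p_a(C)\geqslant 0$ forces $C^2+K_{e,P}\cdot C=-2$ and $p_a(C)=0$; thus $C$ is a smooth rational curve and $(C^2,K_{e,P}\cdot C)$ equals $(-1,-1)$ or $(-2,0)$. In the first case $C$ is a $(-1)$-curve and we are done.

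The final and only substantive step is to rule out the $(-2)$-curve case. Here I would invoke Lemma \ref{-2 curves}, whose classification leaves only $\pi_P(C)\in\{f_e,\ C_e,\ C_e+ef_e,\ C_e+(e+1)f_e\}$; writing $C=aC_e+bf_e-\sum_i m_iE_i$ and using $C\cdot(f_e-E_i)\geqslant 0$ to deduce $m_i\leqslant a$, the equations $C^2=-2$ and $K_{e,P}\cdot C=0$ pin down the multiplicities and the number of points through which $\pi_P(C)$ must pass. The subcases $a=0$ (a fiber meets at most one $p_i$) and $a=1,b=0$ ($C_e$ avoids every $p_i$ when $e>0$) are immediate, while for $a=1$, $b\in\{e,e+1\}$ the curve would have to pass through $e+2$, respectively $e+4$, of the points, contradicting very general position via Proposition \ref{general position} together with the counts $h^0(\mathbb F_e,C_e+ef_e)=e+2$ and $h^0(\mathbb F_e,C_e+(e+1)f_e)=e+4$ from Proposition \ref{dim}. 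I expect this multiplicity bookkeeping, arranged so that the very-general-position hypothesis eliminates every candidate $(-2)$-curve, to be the main obstacle; the base-locus dichotomy and the genus computation are routine.
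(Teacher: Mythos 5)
Your argument is correct, and it reaches the conclusion by a genuinely different organizing principle than the paper's. The paper never invokes the anticanonical divisor in this proof: it runs a direct case analysis on the class $aC_e+bf_e$ of $\pi_P(C)$, using an auxiliary effective divisor $D\in |C_e+(e+2)f_e|$ through all of $P$ to show $K_{e,P}\cdot C\leqslant 0$ when $a>1$ or $b>e+2$ (whence, by the genus formula and Lemma \ref{-2 curves}, $C$ is a $(-1)$-curve), and then disposes of the remaining cases $a=1$, $b\in\{0,e,e+1,e+2\}$ and $a=0$ by explicit $h^0$ counts bounding how many of the $p_i$ can lie on $\pi_P(C)$. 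You instead get $K_{e,P}\cdot C\leqslant 0$ uniformly, for every $C\neq\widetilde{C}_e$, from the base-locus dichotomy for $-K_{e,P}$ (the decomposition of Section \ref{Fixed Component e+3} together with $(\dagger)$, in the spirit of Proposition \ref{usefulprop-1}), and only then specialize. Both proofs converge on the same final ingredients --- Lemma \ref{-2 curves}, the dimension counts of Proposition \ref{dim}, and Proposition \ref{general position} --- to eliminate genuine $(-2)$-curves. Your route is more uniform and makes the role of the effective anticanonical divisor explicit; the paper's route is more elementary in that it needs only the existence of one divisor in $|C_e+(e+2)f_e|$ through $P$ rather than the base-point-freeness statement $(\dagger)$. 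One small point of phrasing: in your last step the subcase $(a,b)=(1,0)$ with $e=2$ is not actually \emph{ruled out} --- $C_2$ is literally a $(-2)$-curve there --- but it is the strict transform of $C_e$ and hence an allowed outcome, so the theorem still holds; it would be worth saying this explicitly rather than folding it into ``immediate.''
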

\begin{proof}
	Let $C=aC_e+bf_e-\sum_{i=1}^rm_iE_i$ be an integral curve on $\mathbb{F}_{e,P}$ 
	with $C^2<0$. If $\pi_P(C)$ is a point, then $C$ is forced to be one of the exceptional 
	divisors $E_i$ and so the assertion is true. Let us assume that $\pi_P(C)$ is a curve.
	Then $C$ is the strict transform of $\pi_P(C)$, and so we get all the $m_i\geqslant 0$. 
	
	Let $C^2=-k$ and $K_{e,P}\cdot C=j$. Using  $K_{e,P}\cdot C=j$ we get
	\begin{equation}\label{eqn(-)}
		2a+2b-ae+j=\sum_{i=1}^rm_i.
	\end{equation}
	Suppose $a>1$ or $b>e+2$. 
	Note that $h^0(\mb F_e, C_e+(e+2)f_e)=e+6$, so there exists an effective 
	divisor $D\in |C_e+(e+2)f_e|$ such that $D$ passes through all 
	points in $P$. As $\pi_P(C)$ is integral,  $D$ and $\pi_P(C)$ can have a common component 
	only if $\pi_P(C)$ is a component of $D$. But note that $\pi_P(C)\equiv aC_e+bf_e$ and hence $D-
	\pi_P(C)\equiv (1-a)C_e+(e+2-b)f_e$. Since $a>1$ or $b>e+2$, we can see that $D-\pi_P(C)$ 
	cannot be effective as either $1-a$ or $e+2-b$ is negative. So $D$ and $\pi_P(C)$ do not 
	have a common component.  Hence we have 
	$$D\cdot \pi_P(C)\geqslant \sum_{p\in P}(\text{mult}_pD)(\text{mult}_p\pi_P(C)).$$
	Since $\text{mult}_pD\geqslant 1$, we have, 
	$$b+2a\geqslant \sum_{i=1}^rm_i\,.$$
	Combining this and \eqref{eqn(-)} we have 
	$$2a+2b-ae+j=\sum_{i=1}^rm_i \leqslant b+2a\,,$$
	that is, $b-ae+j\leqslant 0$. Since $b-ae\geqslant 0$, 
	we have $j\leqslant0$.
	
	Now by the genus formula we have $$0>j-k=K_{e,P}\cdot C+C^2\geqslant -2$$
	This forces $C$ to be a smooth rational curve and $j-k=-2$.  So $C^2$ is $-1$ or $-2$. 
	From Lemma \ref{-2 curves} we can see that $C^2=-2$ cannot occur in this case, 
	since all curves listed in Lemma \ref{-2 curves} have $a\leqslant 1$ and 
	$b \leqslant e+1$.
	
	Next consider the case $a=1,b=e+2$. Then $C^2=e+4-\sum_{i=1}^rm_i^2$ and $m_i\leqslant 1$ for 
	all $1\leqslant i \leqslant r$. So $C^2\geqslant 0$ in this case.
	
	Next consider the case $a=1,b=e+1$.	Since $h^0(\mb F_e, C_e+(e+1)f_e)=e+4$ and points 
	in $P$ are in very general  position, by Propositon \ref{general position} at most $e+3$ points can lie on $\pi_P(C)$. Since $C^2<0$, 
	and again $0\leqslant m_i\leqslant 1$, we get that $\pi_P(C)$ passes through exactly 
	$e+3$ points of $P$ and hence $C^2=-1$. As $a=1$ and $\pi_P(C)$ is integral, it is forced 
	to be smooth and rational. It easily follows that $C$ is a $(-1)$-curve.
	
	Next consider the case $a=1,b=e$. 
	With similar arguments in above case and using the fact $h^0(\mb F_e, C_e+ef_e)=e+2$, we conclude 
	$\pi_P(C)$ passes through exactly $e+1$ points of $P$ and thus $C$ is a $(-1)$-curve.
	
	If $a=1,b=0$ then $C$ is strict transform of $C_e$. If $a=0,b=1$, then 
	$C$ is the strict transform of a fibre of $\pi$.
	Since no two points in $P$ lies in same fibre we have $C$ is a $(-1)$-curve.
	This completes the proof of the Theorem. 
\end{proof}

\section{Ruled Surfaces}
Let $\Gamma$ be an irreducible smooth projective curve of genus $g\geqslant 1$. 
Let $\mc L$ be a line bundle on $\Gamma$ with degree ${\rm deg}(\mc L)=:-e<0$. Let 
$$\pi: X=\mathbb{P}(\mathcal{O}_{\Gamma}\oplus \mathcal{L}) \to \Gamma$$ 
be the projective bundle, which is a ruled surface. The quotient 
$\mathcal{O}_{\Gamma}\oplus \mathcal{L}\to \mc L$ gives rise to a section 
of the map $\pi$, the image of which we denote by $C_e$. 
Let $K_\Gamma$ denote the 
canonical bundle of $\Gamma$. 
Note that the canonical divisor of $X$ is $K_X=-2C_e+\pi^*(K_{\Gamma}+\mathcal{L})$. 

\begin{lemma} \label{RS rational curves}
	Let $\pi_P: X_P\to X$ be the blowup of $X$ at the points in 
	$P=\{p_1,\dots,p_r\}\subset X$. Let $E_i$ be the exceptional divisor 
	corresponding to $p_i$. If $C$ is a smooth rational curve on $X_P$ then 
	$\pi_P(C)$ is either a fiber in $X$ or it equals one of the points $p_i$. 
	In particular, smooth rational curves 
	on $X_P$ are either $E_i$ or strict transforms of fibers of $\pi$.
\end{lemma}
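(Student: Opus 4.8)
The plan is to exploit the genus hypothesis $g\geqslant 1$ to rule out non-constant maps from rational curves into $\Gamma$, thereby forcing the image $\pi_P(C)$ to collapse into a single fiber of $\pi$. First I would separate two cases according to whether $\pi_P(C)$ is a point or a curve. Since $\pi_P$ is an isomorphism over $X\setminus P$, if $\pi_P(C)$ is a point it must be one of the $p_i$, and then $C$ is contained in the fiber $\pi_P^{-1}(p_i)=E_i$; as $E_i$ is an irreducible curve and $C$ is one-dimensional, this forces $C=E_i$, settling this case.

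Now suppose $\pi_P(C)=D$ is a curve. Since $C$ is irreducible and is not contained in any exceptional divisor $E_i$ (its image is a curve, not a point), the morphism $\pi_P$ restricts to a birational morphism $C\to D$; in particular $C$ is the strict transform of $D$, and $D$ is a rational curve whose normalization is $\mb P^1$. The key step is to consider the composite morphism $\pi\circ\pi_P|_C\colon C\to \Gamma$. Since $C\cong\mb P^1$ and $g(\Gamma)\geqslant 1$, this morphism must be constant: there are no non-constant morphisms from $\mb P^1$ to a smooth curve of positive genus, since (working in characteristic zero) pulling back a nonzero global $1$-form on $\Gamma$ along a non-constant, hence generically unramified, map would produce a nonzero element of $H^0(\mb P^1,\Omega^1_{\mb P^1})=0$. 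Hence $\pi(D)$ is a single point, i.e.\ $D$ is contained in a fiber $F$ of $\pi$.

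To finish, I would note that the fibers of the $\mb P^1$-bundle $\pi$ are irreducible (each isomorphic to $\mb P^1$), so an irreducible curve $D\subseteq F$ must equal $F$. Thus $\pi_P(C)=F$ is a fiber of $\pi$ and $C$ is its strict transform. Combining this with the point case above yields both the main assertion and the ``in particular'' statement that every smooth rational curve on $X_P$ is either some $E_i$ or the strict transform of a fiber.

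The main obstacle---indeed essentially the only substantive point---is the vanishing of non-constant maps $\mb P^1\to\Gamma$, which is precisely where the hypothesis $g\geqslant 1$ enters; everything else is routine bookkeeping about the blowup $\pi_P$ and the fiber structure of the ruled surface $X$.
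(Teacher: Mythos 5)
Your proposal is correct and follows exactly the same approach as the paper, whose entire proof is the one-line observation that there is no nonconstant map from a rational curve to $\Gamma$ (since $g(\Gamma)\geqslant 1$). Your write-up simply fills in the routine details of this argument.
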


\begin{proof}
	Follows easily using the fact that we cannot have a nonconstant map $C\to \Gamma$. 
\end{proof}

\begin{lemma}\label{RS fixed component}
	Let $0\leqslant r<e+2-3g$. Let $\pi_P: X_P\to X$ be 
	the blowup of $X$ at the points in 
	$P=\{p_1,\dots,p_r\}\subset X$. Let $E_i$ be the exceptional divisor
	corresponding to $p_i$. For any $x\in X_P$, let 
	$\pi_x: X_{P,x} \to X_P$ be the blowup of $X_P$ at $x$ 
	with exceptional divisor $E_x$. Then $-K_{X_{P,x}}$ is effective. 
	If $C$ is a fixed component of 
	$|-K_{X_{P,x}}|$, then $C$ is one of the following:
	\begin{enumerate}
		\item strict transform of $C_e\subset X$ in $X_{P,x}$,
		\item strict transform of $F_i\subset X$ (the fiber of $\pi$ 
		passing through $p_i$) in $X_{P,x}$,
		\item strict transform of $F_x\subset X$ (the fiber of $\pi$ 
		passing through $\pi_P(x)$) in $X_{P,x}$, 
		\item strict transform of $E_i\subset X_P$ (the exceptional 
		divisor over $p_i$) in $X_{P,x}$, 
		\item $E_x$,
		\item curves with numerical class $f_e$.
	\end{enumerate}
\end{lemma}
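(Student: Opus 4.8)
The plan is to exhibit a single effective divisor $M$ in the class of $-K_{X_{P,x}}$ all of whose irreducible components already occur in the list (1)--(6); since the fixed part of a linear system is dominated by every effective member, the fixed components of $|-K_{X_{P,x}}|$ are then among the components of $M$, and the effectivity of $-K_{X_{P,x}}$ is immediate from the existence of such an $M$. First I would record the class of $-K_{X_{P,x}}$. Writing $\rho\colon X_{P,x}\to X$ for the composite of the two blow-downs, the formula $K_{X_{P,x}}=\rho^\ast K_X+\sum_{i=1}^r E_i+E_x$ together with $K_X=-2C_e+\pi^\ast(K_\Gamma+\mathcal{L})$ and $\deg(K_\Gamma+\mathcal{L})=2g-2-e$ gives $-K_{X_{P,x}}=2C_e+a f_e-\sum_{i=1}^r E_i-E_x$, where $a:=e+2-2g$ and, following the paper's convention, $C_e=\rho^\ast C_e$, $f_e=\rho^\ast f_e$.

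Two numerical facts drive the construction. From $\pi_\ast\mathcal{O}_X(C_e)=\mathcal{O}_\Gamma\oplus\mathcal{L}$ and $\pi_\ast\mathcal{O}_X(2C_e)=\mathcal{O}_\Gamma\oplus\mathcal{L}\oplus\mathcal{L}^{\otimes2}$, all of whose summands have non-positive degree since $\deg\mathcal{L}=-e<0$, one gets $h^0(X,C_e)=h^0(X,2C_e)=1$; thus $C_e$ is rigid and $2C_e$ is an honest effective divisor. Secondly, the hypothesis $r<e+2-3g$ yields $a-(r+1)\geqslant g\geqslant1$, so there is ample room for $a$ fibers, of which the fibers through the $r+1$ marked points occupy at most $r+1$. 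In the generic configuration --- the $p_i$ in distinct fibers, none on $C_e$, and $x$ lying on no exceptional divisor --- I would simply take
\[
M=2C_e+\sum_{i=1}^r\widetilde{F_i}+\widetilde{F_x}+(a-r-1)\,f_e,
\]
where $\widetilde{F_i}=f_e-E_i$ is the fiber through $p_i$, $\widetilde{F_x}=f_e-E_x$ is the fiber through $\pi_P(x)$, and the last term is a sum of $a-r-1\geqslant0$ general fibers. A one-line class computation gives $[M]=2C_e+af_e-\sum E_i-E_x=-K_{X_{P,x}}$, and $M$ is manifestly effective with components among $C_e$ (type 1), the $\widetilde{F_i}$ (type 2), $\widetilde{F_x}$ (type 3), and fibers of class $f_e$ (type 6).

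The remaining work --- and the step I expect to be the only delicate one --- is the bookkeeping for the degenerate positions, which is exactly what forces items (4) and (5) into the list. If some $p_i$ lies on $C_e$, the effective divisor $2C_e=2\rho^\ast C_e$ contains $E_i$ as a component, contributing $\widetilde{E_i}$ (4); if several of the $p_i$ (or $\pi_P(x)$) share a fiber, the corresponding strict transform $f_e-\sum E_i$ already subtracts each $E_i$ once. The genuinely fiddly case is $x\in E_i$, where in the primitive basis $-K_{X_{P,x}}=2C_e+af_e-\sum_{j\neq i}E_j-\widetilde{E_i}-2E_x$, and one closes the class by the choice
\[
M=2C_e+\sum_{j\neq i}\widetilde{F_j}+2\widetilde{F_i}+\widetilde{E_i}+(a-r-1)\,f_e,
\]
whose components are again only of types (1), (2), (4), (6); here the slack $a\geqslant r+1$ is precisely what lets one absorb the extra $-E_x$ by doubling $\widetilde{F_i}$ and inserting $\widetilde{E_i}$. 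The same mechanism handles $x$ lying on $\widetilde{C_e}$ or on a fiber through some $p_i$, where $E_x$ (5) or the combined class $f_e-E_i-E_x$ appears. In every case one checks directly that $[M]=-K_{X_{P,x}}$ and that no curve outside the list (1)--(6) is ever needed; since each fixed component of $|-K_{X_{P,x}}|$ is a component of this $M$, the proof is complete.
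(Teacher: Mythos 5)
Your overall strategy --- exhibit one effective member $M$ of $|-K_{X_{P,x}}|$ whose components all lie in the list, and observe that every fixed component of the system must be a component of $M$ --- is exactly the paper's strategy. But your execution has a genuine gap at the central step. On a ruled surface over a curve $\Gamma$ of genus $g\geqslant 1$, the Picard group is $\mathbb{Z}C_e\oplus \pi^*\mathrm{Pic}(\Gamma)$, not $\mathbb{Z}C_e\oplus\mathbb{Z}f_e$: two fibers $\pi^{-1}(q)$ and $\pi^{-1}(q')$ are numerically but not linearly equivalent unless $q\sim q'$ on $\Gamma$. Writing $-K_{X_{P,x}}=2C_e+af_e-\sum E_i-E_x$ with $a=e+2-2g$ records only the numerical class, and the fixed part of a linear system is an invariant of the linear, not the numerical, equivalence class. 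Consequently your divisor $M=2C_e+\sum\widetilde{F_i}+\widetilde{F_x}+(a-r-1)f_e$ with ``general fibers'' in the last term is in general \emph{not} a member of $|-K_{X_{P,x}}|$: the residual fibers must lie over an effective divisor on $\Gamma$ in the specific class $-K_\Gamma-\mathcal{L}-Q$, where $Q=\sum_i\pi(p_i)+(\pi\circ\pi_P)(x)$. That such an effective divisor exists at all is where the hypothesis $r<e+2-3g$ is really used: it gives $\deg(-K_\Gamma-\mathcal{L}-Q)\geqslant g$, so Riemann--Roch yields $h^0(\Gamma,-K_\Gamma-\mathcal{L}-Q)>0$. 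Your inequality $a-(r+1)\geqslant g$ is the right arithmetic, but you never convert it into this Riemann--Roch statement, and positivity of the degree alone would not suffice on a positive-genus curve.

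A second, downstream consequence of the same oversight: even after invoking Riemann--Roch, the resulting effective divisor $D'\in|-K_\Gamma-\mathcal{L}-Q|$ cannot be assumed general --- the linear system may be zero-dimensional, and $D'$ may be supported at the images of the marked points. The paper therefore splits $D'=D''+D'''$ and tracks the total transforms of the extra copies of $F_i$ and $F_x$ coming from $D'''$; these contribute the exceptional components $\widetilde{E_i}$ and $E_x$ (items (4) and (5)) even when the points are in completely general position. In your write-up, items (4) and (5) arise only from degenerate configurations of the $p_i$ and $x$, so this source of exceptional components is missed, and the case analysis for $M$ would not close without it. The fix is precisely the paper's: replace the free choice of fibers by $\pi^*D'$ for an actual section of $-K_\Gamma-\mathcal{L}-Q$, and enumerate the components of its total transform.
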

\begin{proof}
	Recall the maps $X_P\xrightarrow{\pi_P} X\xrightarrow{\pi}\Gamma$. 
	Let $Q$ denote the effective divisor $\sum_{i=1}^r\pi(p_i)+(\pi\circ\pi_P)(x)$ 
	on $\Gamma$. Note that ${\rm deg}(Q)= r+1$. 
	Let $F_i$ denote the fiber of $\pi$ through the point $p_i$ 
	and let $F_x$ denote the fiber of $\pi$ through the point $\pi_P(x)$. 
	Then $\pi^*Q=\sum_{i=1}^rF_i+F_x$. 
	By Riemann-Roch theorem we have 
	$$h^0(-K_{\Gamma}-\mathcal{L}-Q)\geqslant 
	{\rm deg}(-K_{\Gamma}-\mathcal{L}-Q)+1-g= (2(1-g)+e-r-1)+1-g>0\,,$$
	by the assumption on $r$. Hence, there exists an effective divisor $D'\in |-K_{\Gamma}-\mathcal{L}-Q|$.
	Let us write $D'=D''+D'''$, where $D''$ has support away from the set 
	$\{\pi(p_1),\ldots,\pi(p_r),(\pi\circ\pi_P)(x)\}$ and $D'''$ has support 
	in this set. 
	Thus, we may write 
	\begin{align*}
		-K_X&=2C_e+\pi^*(-K_\Gamma-\mc L-Q)+\pi^*Q\\
		&=2C_e+\pi^*D'+\sum_{i=1}^rF_i+F_x\,\\
		&=2C_e+\pi^*D''+\pi^*D'''+\sum_{i=1}^rF_i+F_x\,.
	\end{align*}
	From this we get that 
	$$-K_{X_P}=-\pi_P^*K_X-\sum_{i=1}^rE_i=2\pi_P^*C_e+\pi_P^*\pi^*D''+\pi_P^*\pi^*D'''+\sum_{i=1}^r\widetilde F_i+\pi_P^*F_x\,.$$ 
	Finally, we conclude that
	\begin{align*}
		-K_{X_{P,x}}&=2(\pi_P\circ\pi_x)^*C_e+(\pi_P\circ\pi_x)^*\pi^*D''+(\pi_P\circ\pi_x)^*\pi^*D'''+\pi_x^*\sum_{i=1}^r\widetilde F_i+((\pi_P\circ\pi_x)^*F_x-E_x)\,.
	\end{align*}
	It is easy to see that $(\pi_P\circ\pi_x)^*F_x-E_x$ is an effective divisor. 
	This proves that $-K_{X_{P,x}}$ is an effective divisor. 
	
	If $f:\widetilde Y \to Y$ is the blowup of a point on a smooth surface, and $D\subset Y$ 
	is an integral curve, then the components of $f^*D$ are the strict transform of $D$
	and possibly the exceptional divisor. Thus, for example, the possible components of 
	$\pi_P^*C_e$ are the strict transform $\widetilde C_e\subset X_P$ and the $E_i$. The possible components 
	of $(\pi_x\circ\pi_P)^*C_e$ are the strict transform $\widetilde C_e\subset X_{P,x}$,
	the strict transforms $\widetilde E_i$ and $E_x$. Arguing similarly, the Lemma follows
	easily. 
\end{proof}

\begin{theorem}\label{SC for ruled sur}
	Let $0\leqslant r < e+2-3g$. Let $P=\{p_1,\dots,p_r\}$ be a set of $r$ 
	points in $X$ such that no two $p_i$ are on the same fiber and 
	$p_i\notin C_e$. Let $\pi_P: X_P\to X$ be the blowup of $X$ at points 
	in $P$.
	For an ample line bundle $L \equiv \alpha C_e+\beta f_e-\sum_{i=1}^r\mu_iE_i$ and for $x\in X_P$ we have the following:
	\begin{enumerate}
		\item If $x$ is not in any of the curves $\widetilde{C_e}, \widetilde{F_1},\dots,\widetilde{F_r},E_1,\dots,E_r$, then $\varepsilon(X_P,L,x)=\alpha$.
		\item If $x$ is not in any of the curves $\widetilde{C_e},E_1,\dots,E_r$, but $x\in \widetilde{F_i}$ for some $i$, then $\varepsilon(X_P,L,x)=\alpha-\mu_i$.
		\item If $x$ is not in any of the curves $\widetilde{C_e},\widetilde{F_1},\dots,\widetilde{F_r}$, but $x\in E_i$ for some $i$, then $\varepsilon(X_P,L,x)=\mu_i$.
		\item If $x$ is not in any of the curves $\widetilde{F_1},\dots,\widetilde{F_r},E_1,\dots,E_r$, but $x\in \widetilde{C_e}$, then $\varepsilon(X_P,L,x)=\min(\beta -\alpha e,\alpha)$.
		\item If $x\in C_e\cap \widetilde{F_i}$, then $\varepsilon(X_P,L,x)=\min(\beta -\alpha e,\alpha-\mu_i)$.
		\item If $x\in \widetilde{F_i}\cap E_i$, then
		$\varepsilon(X_P,L,x)=\min(\alpha-\mu_i,\mu_i)$.
	\end{enumerate}
	Here $F_i$ is the fibre of $\pi$ passing through $p_i$ and $E_i$ is the exceptional divisor on $X_P$ corresponding to $p_i$. 
\end{theorem}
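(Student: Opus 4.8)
The plan is to apply Proposition \ref{SC computation-1} to the blowup $\pi_x \colon X_{P,x} \to X_P$ at the point $x$. Since $0 \leqslant r < e+2-3g$, Lemma \ref{RS fixed component} guarantees $H^0(X_{P,x}, -K_{X_{P,x}}) \neq 0$, so the hypothesis of Proposition \ref{SC computation-1} is met and
$$\varepsilon(X_P, L, x) = \sup\{s \geqslant 0 \mid (\pi_x^* L - s E_x) \cdot \widetilde{C} \geqslant 0 \text{ for all } \widetilde{C} \in \Delta\},$$
where $\Delta$ is the finite set of Definition \ref{defDelta}. Everything then reduces to enumerating $\Delta$ in each of the six cases and reading off the resulting constraints on $s$.

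The first step is to identify all curves that can enter $\Delta$. If $\widetilde{C} \subset X_{P,x}$ is a $(-1)$- or $(-2)$-curve, then it is smooth rational, so the composite $\widetilde{C} \to \Gamma$ is constant (as $g \geqslant 1$) and $\widetilde{C}$ lies in a fiber of $X_{P,x} \to \Gamma$; combined with Lemma \ref{RS rational curves}, the only such curves passing through $x$ are the strict transforms of a general fiber (class $f_e - E_x$), of $\widetilde{F_i}$ (class $f_e - E_i - E_x$), and of $E_i$ (class $E_i - E_x$). The section $C_e$ has genus $g \geqslant 1$, hence is never a $(-1)$- or $(-2)$-curve; it can enter $\Delta$ only through the third clause of Definition \ref{defDelta}, namely when $x \in \widetilde{C_e}$, where its strict transform $C_e - E_x$ has self-intersection $-e-1 < 0$ and is a fixed component of $|-K_{X_{P,x}}|$ by Lemma \ref{RS fixed component}. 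The remaining fixed components listed there either have class $f_e$ (self-intersection $0$, hence excluded from $\Delta$) or do not pass through $x$, while $E_x$ itself yields only the vacuous constraint $s \geqslant 0$.

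Next I would record the intersection numbers with $L' := \pi_x^* L - s E_x = \alpha C_e + \beta f_e - \sum_i \mu_i E_i - s E_x$, using $C_e^2 = -e$, $C_e \cdot f_e = 1$, $f_e^2 = 0$, the standard relations among the exceptional classes, and $C_e \cdot E_i = 0$ (valid since $p_i \notin C_e$):
$$L' \cdot (f_e - E_x) = \alpha - s, \qquad L' \cdot (f_e - E_i - E_x) = \alpha - \mu_i - s,$$
$$L' \cdot (E_i - E_x) = \mu_i - s, \qquad L' \cdot (C_e - E_x) = (\beta - \alpha e) - s.$$
It then remains to check that the six hypotheses are mutually exclusive and exhaustive, which follows from the disjointness relations $\widetilde{F_i} \cap \widetilde{F_j} = \emptyset$ for $i \neq j$, $\widetilde{C_e} \cap E_i = \emptyset$, and $\widetilde{F_i} \cap E_j = \emptyset$ for $j \neq i$, all immediate from the intersection numbers. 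Each hypothesis then pins down exactly which of the above curves pass through $x$, so that $\Delta$ consists of precisely those curves whose constraints give the asserted minimum. For instance, in case (5) one has $x \in \widetilde{C_e} \cap \widetilde{F_i}$ with $x \notin E_i$ (because $\widetilde{C_e}$ meets $\widetilde{F_i}$ at the lift of $C_e \cap F_i \neq p_i$), giving $\Delta = \{C_e - E_x,\, f_e - E_i - E_x\}$ and hence $\varepsilon = \min(\beta - \alpha e,\, \alpha - \mu_i)$.

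I expect the only delicate point to be the completeness of the enumeration of $\Delta$: confirming that no higher-degree integral curve through $x$ has a $(-1)$- or $(-2)$-curve as strict transform is exactly where the fiber-containment argument (rational curves map to a point of $\Gamma$) is indispensable, and one must be careful that the genus $g \geqslant 1$ section $C_e$ is captured only via the base-locus clause rather than the $(-1)/(-2)$ clauses. Once $\Delta$ is correctly determined in each case, the value of $\varepsilon$ is read off immediately from the displayed intersection numbers.
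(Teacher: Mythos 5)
Your proposal is correct and follows essentially the same route as the paper: invoke Lemma \ref{RS fixed component} for effectivity of $-K_{X_{P,x}}$, use Lemma \ref{RS rational curves} (rational curves map to points of $\Gamma$ since $g\geqslant 1$) together with the list of fixed components to enumerate $\Delta$ in each of the six cases, and then apply Proposition \ref{SC computation-1} and read off the minimum from the intersection numbers. Your explicit disjointness/exhaustiveness check and the observation that $\widetilde{C_e}$ enters only via the base-locus clause are correct refinements of exactly what the paper does.
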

\begin{proof}
	The proof is very similar to the proof of 
	Theorem \ref{SC e+2} and Theorem \ref{SC e+3}.
	As we did there, we write down the 
	set $\Delta$ (see Deefinition \ref{defDelta}) and apply Proposition \ref{SC computation-1}.
	By Lemma \ref{RS fixed component}, $-K_{X_{P,x}}$ is effective. 
	Let $\pi_x:X_{P,x}\to X_P$ denote the blowup of $X_P$ 
	at $x$ with exceptional divisor $E_x$. 
	We need to list integral curves $C\subset X_P$, passing through 
	$x$, such that its strict transform 
	$\widetilde{C}$ is a $(-1)$-curve or a $(-2)$-curve 
	or a fixed component of $|-K_{X_{P,x}}|$. \\
	\textbf{Case 1:} $x$ is not in any of the curves in 
	the following set $\{\widetilde{C_e}, \widetilde{F_1},\dots,\widetilde{F_r},E_1,\dots,E_r\}$. 
	It is easily checked using Lemma \ref{RS rational curves} 
	and Lemma \ref{RS fixed component} that 
	$\Delta$ consists of only one curve, namely, the strict transform of $F_x\subset X$
	in $X_{P,x}$. Its numerical class equals $f_e-E_x$. 
	Hence, by Proposition \ref{SC computation-1} 
	$$\varepsilon(X_P,L,x)=\alpha\,.$$
	\textbf{Case 2:} $x$ is not in any of the curves $\{\widetilde{C_e},E_1,\dots,E_r\}$, 
	but $x\in \widetilde{F_i}$ for some $i$.
	Here $\Delta$ consists of $f_e-E_i-E_x$. Hence, 
	$$\varepsilon(X_P,L,x)=\alpha-\mu_i\,.$$
	\textbf{Case 3:}  $x$ is not in any of the curves 
	$\{\widetilde{C_e},\widetilde{F_1},\dots,\widetilde{F_r}\}$, 
	but $x\in E_i$ for some $i$.
	Here $\Delta$ consists of $E_i-E_x$. 
	Hence,
	$$\varepsilon(X_P,L,x)=\mu_i\,.$$
	\textbf{Case 4:} $x$ is not in any of the curves $\{\widetilde{F_1},\dots,\widetilde{F_r},E_1,\dots,E_r\}$, but $x\in \widetilde{C_e}$.
	Here $\Delta$ consists of $F_x-E_x$ and $\widetilde{C_e}-E_x$. Hence,
	$$\varepsilon(X_P,L,x)=\min(\beta -\alpha e,\alpha)\,.$$
	\textbf{Case 5:} $x\in \widetilde{C_e} \cap \widetilde{F_i}$.
	Here $\Delta$ consists of $\widetilde{C_e}-E_x$,
	$\widetilde{F_i}-E_i-E_x$. Hence,
	$$\varepsilon(X_P,L,x)=\min(\beta -\alpha e,\alpha-\mu_i)\,.$$
	\textbf{Case 6:} $x\in \widetilde{F_i}\cap E_i$.
	Here $\Delta$ consists of $\widetilde{F_i}-E_i-E_x$, $E_i-E_x$. Hence,
	$$\varepsilon(X_P,L,x)=\min(\alpha-\mu_i,\mu_i)\,.$$
\end{proof}

\begin{proposition}\label{Conj 2.1 HJNS}
	Let $\phi:X=\mb P(E)\to \Gamma$ be a ruled surface over a smooth curve $\Gamma$ of genus $g$,
	with invariant $e>0$.
	Let $P=\{p_1,\dots,p_r\}\subset X$ be $r \leqslant e$ distinct 
	points in $X$. Let $\pi_P: X_P \to X$ 
	be the blowup of $X$ at the points in $P$. Let $C$ be a integral curve on 
	$X_P$ such that $C^2<0$. Then $C$ is one of the following:
	\begin{enumerate}
		\item $\widetilde{C}_e$,
		\item $E_i$, where $E_i$ is the exceptional divisor corresponding 
		to $p_i\in P$ for some $1\leqslant i \leqslant r$,
		\item strict transform of the fiber containing  
		$p_i\in P$ for some $1\leqslant i \leqslant r$.
	\end{enumerate}
\end{proposition}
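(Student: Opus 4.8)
The plan is to analyze $C$ through its image $D:=\pi_P(C)$ on $X$. Writing $C=aC_e+bf_e-\sum_{i=1}^r m_iE_i$, the fact that $C$ is the strict transform of $D$ forces all $m_i\geqslant 0$, and $m_i=\text{mult}_{p_i}(D)$. I would then split into cases according to what $D$ is and show that the only configurations producing $C^2<0$ are the three classes listed.

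First, the trivial reductions. If $\pi_P(C)$ is a point, then $C$ is one of the exceptional divisors $E_i$, giving case (2). If $D=C_e$, then $C=\widetilde{C}_e$, giving case (1). If $D$ is a fiber of $\pi$, then $C=f_e-\sum_{p_i\in D}E_i$, so $C^2=-\#\{\,i : p_i\in D\,\}$; for this to be negative the fiber must pass through at least one $p_i$, hence $C$ is the strict transform of the fiber through some $p_i$, giving case (3). (If several points lie on one fiber this is still the strict transform of a fiber through some $p_i$, so no separate treatment is needed.) None of these reductions uses anything beyond $C$ being a strict transform.

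The substantive case is when $D$ is an integral curve distinct from $C_e$ and from every fiber. Here I would first pin down $a,b$. Since $f_e$ is nef and $D$ is not a fiber, the map $\phi|_D$ is finite, so $a=D\cdot f_e\geqslant 1$; and since $D$ and $C_e$ are distinct integral curves, $D\cdot C_e\geqslant 0$, which reads $b-ae\geqslant 0$, i.e. $b\geqslant ae$. Next I would bound the multiplicities: for the fiber $F_i$ through $p_i$, which is not a component of the irreducible curve $D$, the local intersection at $p_i$ gives $a=D\cdot F_i\geqslant \text{mult}_{p_i}(D)=m_i$, so $m_i\leqslant a$ for every $i$. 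Since there are only $r\leqslant e$ points, this yields $\sum_{i=1}^r m_i^2\leqslant r\,a^2\leqslant e\,a^2$. Combining with $C^2=-a^2e+2ab-\sum_{i=1}^r m_i^2$ and $b\geqslant ae$ gives $C^2\geqslant -a^2e+2a^2e-ea^2=0$, contradicting $C^2<0$. Hence this case cannot occur, which completes the proof.

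The argument is short, and essentially all of its content is the final estimate. The point to watch is the chain of inequalities rather than any deep structural input: the hypothesis $r\leqslant e$ enters exactly once, and the resulting bound $C^2\geqslant 0$ is tight, so the proof would fail immediately at $r=e+1$ — consistent with the expectation that genuinely new negative curves appear only once $r$ exceeds $e$. I would also verify at the outset that the numerical facts $C_e^2=-e$, $C_e\cdot f_e=1$, $f_e^2=0$ together with the nefness of $f_e$ and the uniqueness of the negative section hold for an arbitrary, not necessarily decomposable, ruled surface of invariant $e>0$; these are standard from the structure theory in \cite[Chapter V]{Ha}.
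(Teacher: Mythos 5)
Your proposal is correct and follows essentially the same route as the paper: reduce to the case where $\pi_P(C)$ is an integral curve distinct from $C_e$ and the fibers, use $a\geqslant 1$, $b\geqslant ae$, and $m_i\leqslant a$ (from intersecting with the strict transform of a fiber) to get $C^2\geqslant a^2e-\sum_{i=1}^r m_i^2\geqslant a^2e-ra^2\geqslant 0$, contradicting $C^2<0$. The paper cites \cite[Chapter 5, Proposition 2.20(a)]{Ha} for $a>0$, $b\geqslant ae$ where you derive it directly from nefness of $f_e$ and $D\cdot C_e\geqslant 0$, but this is the same content.
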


\begin{proof}
	
	Let $C=aC_e+bf_e-\sum_{i=1}^rm_iE_i$ be an integral curve on $X_P$ 
	with $C^2<0$. If $\pi_P(C)$ is a point, then $C$ is forced to be one of the exceptional 
	divisors $E_i$ and so the assertion is true. Let us assume that $\pi_P(C)$ is a curve.
	Then $C$ is the strict transform of $\pi_P(C)$, and so we get all the $m_i\geqslant 0$. 
	Clearly if $\pi_P(C)$ is $C_e$ or a fiber we are done. So
	suppose that $\pi_P(C)$ is different from $C_e$ and fiber. 
	The integral curve $\pi_P(C)\subset X$ has numerical class $aC_e+bf_e$, 
	where $a>0$ and $b\geqslant ae$, see \cite[Chapter 5, Proposition 2.20(a)]{Ha}. 
	The self-intersection number of $\pi_P(C)$ is 
	$$(aC_e+bf_e)^2=ab+a(b-ae)\geqslant ab\geqslant a^2e\,.$$ 
	Now since $m_i\leqslant a$ (obtained by intersecting $C$ with the strict 
	tranform of a fiber), we get that 
	$$C^2=(aC_e+bf_e)^2-\sum_{i=1}^r m_i^2\geqslant a^2e-\sum_{i=1}^rm_i^2\geqslant 0\,.$$
	The last inequality holds since $r\leqslant e$. This is a contradiction 
	to the assumption that $C^2<0$. This completes the proof of the Proposition. 
\end{proof}

\begin{corollary}\label{cor conj 2.1}
	With the hypothesis as in Proposition \ref{Conj 2.1 HJNS}, 
	\cite[Conjecture 2.1]{HJNS24-1} is true for $r\leqslant e$.
\end{corollary}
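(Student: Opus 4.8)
The plan is to deduce the corollary directly from Proposition \ref{Conj 2.1 HJNS}, in which all the geometric content already resides. First I would recall the precise form of \cite[Conjecture 2.1]{HJNS24-1}: it predicts that on the blowup $X_P$ of a ruled surface of invariant $e>0$ at points in (very) general position, the only integral curves $C$ with $C^2<0$ are those forced to be negative by the ruled structure, namely the strict transform of the negative section $C_e$, the exceptional divisors $E_i$, and the strict transforms of the fibers through the blown-up points. With this in hand the task reduces to matching the conjectured list against the classification we have already obtained.

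Next I would invoke Proposition \ref{Conj 2.1 HJNS} verbatim: under the standing hypotheses $e>0$ and $r\leqslant e$, every integral curve $C\subset X_P$ with $C^2<0$ is either $\widetilde{C}_e$, one of the $E_i$, or the strict transform of the fiber through some $p_i$. These are exactly the three families appearing in the conjecture, so no curve of negative self-intersection can lie outside the predicted list, which is precisely the assertion of \cite[Conjecture 2.1]{HJNS24-1} in the range $r\leqslant e$. To complete the identification I would record the relevant self-intersection numbers, so that each family is correctly matched with the type the conjecture demands: $E_i^2=-1$, and (when the $p_i$ lie in distinct fibers off $C_e$, as holds at general points) $(f_e-E_i)^2=-1$, so these are genuine $(-1)$-curves, while $\widetilde{C}_e^2=C_e^2=-e<0$ realizes the negative section. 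Verifying that the genericity conventions under which \cite[Conjecture 2.1]{HJNS24-1} is phrased are implied by the hypotheses of Proposition \ref{Conj 2.1 HJNS} is then immediate.

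I expect no genuine obstacle here: the entire difficulty was already absorbed into the proof of Proposition \ref{Conj 2.1 HJNS}, where the bound $r\leqslant e$ entered through the estimate $C^2\geqslant a^2e-\sum_{i=1}^r m_i^2\geqslant 0$ that rules out any unexpected negative curve once $\pi_P(C)$ is neither $C_e$ nor a fiber. Consequently the corollary is essentially a transcription, and the only point that warrants care is aligning the notation and the very-general-position conventions of the two papers so that the three cases of the proposition correspond exactly to the three types allowed by the conjecture.
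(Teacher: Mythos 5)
Your proposal is correct and matches the paper, which states the corollary without a separate proof precisely because it is an immediate transcription of Proposition \ref{Conj 2.1 HJNS}: the three families of negative curves produced by the proposition ($\widetilde{C}_e$, the $E_i$, and the strict transforms of fibers through the $p_i$) are exactly those permitted by \cite[Conjecture 2.1]{HJNS24-1}. Your added remarks on self-intersections and on the genericity conventions (points in distinct fibers, off $C_e$) are the right sanity checks but introduce nothing beyond what the proposition already gives.
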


\section{Negative curves on Blowups of Ruled Surfaces}

\begin{theorem}\label{wbnc-hirz}
	Let $P=\{p_1,\dots,p_r\}\subset \mb F_e$ be set of $r$ 
	points. Recall that $\pi_P: \mathbb{F}_{e,P} \to \mathbb{F}_e$ 
	is the blowup of $\mathbb{F}_e$ at the points in $P$. Let $C$ be an integral curve on 
	$\mathbb{F}_{e,P}$ such that $C^2<0$. Then either $C$ is an exceptional divisor or 
	$$C^2\geqslant {\rm min}\{-2,-e-r \}+ 
	\left(e+2-\left\lfloor \frac{r+e}{2}\right\rfloor\right)(C \cdot f_e)\,.$$
\end{theorem}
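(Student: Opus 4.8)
The plan is to write $C=aC_e+bf_e-\sum_{i=1}^r m_iE_i$, record that $C\cdot f_e=a$ and $C^2=-a^2e+2ab-\sum_i m_i^2$, and bound $C^2$ from below in terms of $a$. First I would dispose of the degenerate shapes. If $\pi_P(C)$ is a point then $C=E_i$ is an exceptional divisor and there is nothing to prove. Otherwise $C$ is the strict transform of the integral curve $\pi_P(C)=aC_e+bf_e$, so all $m_i\geqslant 0$ and, by \cite[Chapter 5, Corollary 2.18]{Ha}, either $(a,b)=(0,1)$ (a fiber), or $(a,b)=(1,0)$ (the curve $C_e$), or $a\geqslant 1$ and $b\geqslant ae$. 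For a fiber $C=f_e-\sum_{i\in S}E_i$ one has $C\cdot f_e=0$ and $C^2=-|S|\geqslant -r\geqslant \min\{-2,-e-r\}$, so the bound holds; the strict transform $\widetilde C_e$, which has $b=0<ae$, is treated by the direct computation $\widetilde C_e^{\,2}=-e-|S|$ with $|S|\leqslant r$.

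For the main case $a\geqslant 1$, $b\geqslant ae$, I would combine two ingredients. The first is adjunction: since $C$ is integral its arithmetic genus is nonnegative, whence $C^2\geqslant -C\cdot K_{e,P}-2$, and from $K_{e,P}=-2C_e-(e+2)f_e+\sum_iE_i$ one computes $-C\cdot K_{e,P}=-ae+2a+2b-\sum_i m_i$. The second is an auxiliary curve: by Proposition \ref{dim}, $h^0(\mathbb F_e,C_e+(e+\lambda)f_e)=e+2+2\lambda$, which exceeds $r$ once $\lambda=\lfloor (r-e)/2\rfloor$ (take $\lambda=0$ when $r\leqslant e$). Hence there is an effective $D\in|C_e+(e+\lambda)f_e|$ through all $r$ points, whose strict transform is $\widetilde D=C_e+(e+\lambda)f_e-\sum_i n_iE_i$ with every $n_i\geqslant 1$. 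Provided $\pi_P(C)$ is not a component of $D$, the inequality $\widetilde D\cdot C\geqslant 0$ reads $b+\lambda a-\sum_i n_im_i\geqslant 0$, so $\sum_i m_i\leqslant \sum_i n_im_i\leqslant b+\lambda a$.

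Feeding this into the adjunction bound and using $b\geqslant ae$ gives
\[
C^2\geqslant -ae+2a+2b-(b+\lambda a)-2\geqslant (2-\lambda)a-2.
\]
The computation is then closed by the arithmetic identity $\lfloor (r+e)/2\rfloor-\lfloor (r-e)/2\rfloor=e$ (both floors round the same way, as $r+e$ and $r-e$ have the same parity), which gives $2-\lambda=e+2-\lfloor (r+e)/2\rfloor$; together with $-2\geqslant \min\{-2,-e-r\}$ this is exactly the claimed inequality. When $r\leqslant e$ one has $\lambda=0$ and the same estimate yields $C^2\geqslant 2a-2\geqslant 0$, so no negative curve of this shape occurs and the assertion is vacuous in that range (up to the separate curve $\widetilde C_e$).

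I expect two points to require the most care. The first is guaranteeing that the auxiliary $D$ can be chosen \emph{without} $\pi_P(C)$ as a component: this is immediate when $a\geqslant 2$ by comparing $C_e$-coefficients (an effective residual $\widetilde D-C$ would have $C_e$-coefficient $1-a<0$), but the borderline subcase $a=1$, $b=e$ needs a direct argument, since there $\pi_P(C)$ and $D$ may share a component and one must instead exploit $m_i\in\{0,1\}$ to compute $C^2=-e+2b-\#\{p_i\in\pi_P(C)\}$ explicitly. The second, and genuinely delicate, point is the direct verification for $\widetilde C_e$, which is the sole shape not covered by the adjunction-plus-auxiliary-curve mechanism; this is precisely where the interplay between $\min\{-2,-e-r\}$, the coefficient $e+2-\lfloor (r+e)/2\rfloor$, and the number of blown-up points lying on $C_e$ must be checked, and is the step I would scrutinize most carefully.
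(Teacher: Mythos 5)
Your plan reproduces the paper's argument almost verbatim in its main case: the paper also writes $C=aC_e+bf_e-\sum_i m_iE_i$, uses adjunction in the form $K_{e,P}\cdot C+C^2\geqslant -2$, and produces an auxiliary effective divisor through all of $P$ in the class $C_e+\lfloor (r+e)/2\rfloor f_e$ --- the same class as your $C_e+\bigl(e+\lfloor (r-e)/2\rfloor\bigr)f_e$ --- to bound $\sum_i m_i$ when $a\geqslant 2$ (where, as you note, $D-\pi_P(C)$ cannot be effective). The resulting estimate $C^2\geqslant -2+\bigl(e+2-\lfloor (r+e)/2\rfloor\bigr)a$ is identical to yours. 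For $a=1$ and $a=0$ the paper, like you, falls back on direct computation, obtaining $C^2\geqslant -e-r$ and $C^2\geqslant -r$ respectively.

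The genuine gap is exactly the step you flagged at the end and did not carry out. For $a=1$ (in particular for $\widetilde{C}_e$, which has $b=0$) the direct computation gives only $C^2\geqslant -e-r$, whereas the asserted lower bound is $\min\{-2,-e-r\}+\bigl(e+2-\lfloor (r+e)/2\rfloor\bigr)$, since $C\cdot f_e=1$ there; the summand $e+2-\lfloor (r+e)/2\rfloor$ is strictly positive whenever $r\leqslant e+3$, and then $-e-r$ does not dominate the right-hand side. Concretely, take $e=2$, $r=1$ with $p_1\notin C_e$: the curve $\widetilde{C}_e=C_e$ on $\mathbb{F}_{2,P}$ is integral with $C^2=-2$ and $C\cdot f_e=1$, while the claimed bound reads $\min\{-2,-3\}+(2+2-1)\cdot 1=0$. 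So the inequality fails for this curve, and no amount of care in the ``direct verification for $\widetilde{C}_e$'' can close the argument as the statement stands. Your instinct to scrutinize precisely this step was sound; note that the paper's own proof has the same lacuna, since it establishes $C^2\geqslant -e-r$ in the $a=1$ case and then declares the proof complete without comparing this quantity to the displayed bound. A repairable version of the statement would have to exclude $\widetilde{C}_e$ (as Theorem \ref{Conj 4.8 HJNS} does), or else weaken the coefficient of $C\cdot f_e$ for curves with $C\cdot f_e\leqslant 1$.
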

\begin{proof}
	Let $C=aC_e+bf_e-\sum_{i=1}^rm_iE_i$ be an integral curve on $\mathbb{F}_{e,P}$ 
	with $C^2<0$. If $\pi_P(C)$ is a point, then $C$ is forced to be one of the exceptional 
	divisors $E_i$ and so $C^2=-1$. Let us assume that $\pi_P(C)$ is a curve.
	Then $C$ is the strict transform of $\pi_P(C)$, and so we get all the $m_i\geqslant 0$. 
	
	Let $C^2=-k$ and $K_{e,P}\cdot C=j$. Using  $K_{e,P}\cdot C=j$ we get
	\begin{equation}\label{eqn(-1)}
		2a+2b-ae+j=\sum_{i=1}^rm_i.
	\end{equation}
	
	Note that $h^0(\mb F_e, C_e+\lambda f_e)=2+2\lambda-e$, see Proposition \ref{dim}. Letting 
	$\lambda:=\lfloor \frac{r+e}{2}\rfloor$ we see that $2+2\lambda -e>r$. Thus, there 
	exists an effective divisor $D\in |C_e+\lambda f_e|$ such that $D$ passes through all 
	points in $P$. As $\pi_P(C)$ is integral,  $D$ and $\pi_P(C)$ can have a common component 
	only if $\pi_P(C)$ is a component of $D$. 
	
	First consider the case when $a>1$. 
	As $\pi_P(C)\equiv aC_e+bf_e$, hence, 
	$D-\pi_P(C)\equiv (1-a)C_e+(\lambda-b)f_e$. Since $a>1$, 
	we see that $D-\pi_P(C)$ cannot be effective. So $D$ and $\pi_P(C)$ do not 
	have a common component.  Hence we have 
	$$D\cdot \pi_P(C)\geqslant \sum_{p\in P}(\text{mult}_pD)(\text{mult}_p\pi_P(C)).$$
	Since $\text{mult}_pD\geqslant 1$, we have, 
	$$b+a(\lambda -e)\geqslant \sum_{i=1}^rm_i\,.$$
	Combining this and \eqref{eqn(-1)}, we have 
	$$2a+2b-ae+j=\sum_{i=1}^rm_i \leqslant b+a(\lambda -e)\,,$$
	that is, $b-a(\lambda -2)+j\leqslant 0$. Since $b\geqslant ae$ (use the fact that $\pi_P(C)$ 
	is not $C_e$ or $f_e$ and \cite[Chapter 5, Corollary 2.18(b)]{Ha}), 
	we have $-j\geqslant a(e+2-\lambda )$.
	
	Now by the genus formula we have $$j-k=K_{e,P}\cdot C+C^2\geqslant -2\,,$$
	which yields 
	$$C^2=-k\geqslant -2+a(e+2-\lambda)= -2+(e+2-\lambda)(C \cdot f_e)\,.$$

	Next consider the case $a=1$. Then $C^2=2b-e-\sum_{i=1}^rm_i^2$ and 
	$0\leqslant m_i\leqslant 1$ for 
	all $1\leqslant i \leqslant r$. Thus, using the fact that $b\geqslant 0$,
	$$C^2\geqslant 2b-e-r\geqslant -e-r\,.$$
	
	Finally consider the case $a=0$. In this case $\pi_P(C)$ is forced to be a 
	fiber. Thus, in this case we have $C^2\geqslant -r$. 
	
	This completes the proof of the Theorem.
\end{proof}

\begin{theorem}\label{wbnc-ruled}
	Let $\phi:X=\mb P(E)\to \Gamma$ be a ruled surface over a smooth curve $\Gamma$ of genus $g$,
	with invariant $e$.
	Let $P=\{p_1,\dots,p_r\}\subset X$ be $r$ distinct 
	points in $X$. Let $\pi_P: X_P \to X$ 
	be the blowup of $X$ at the points in $P$. Let $C$ be an integral curve on 
	$X_P$ such that $C^2<0$. Let
	$$\lambda:={\rm max}\left\{2g-1,2g-1+e,g+\left\lfloor \frac{r+e}{2}\right\rfloor\right\}\,.$$ 
	Then either $C$ is an exceptional divisor or 
	$$C^2\geqslant {\rm min}\{-2,-r \}+ 
	\left(e+2-\lambda-2g\right)(C \cdot f_e)\,.$$
\end{theorem}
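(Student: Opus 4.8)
The plan is to follow the template of the proof of Theorem~\ref{wbnc-hirz}, replacing the explicit cohomology of $\mb F_e$ by Riemann--Roch on $\Gamma$ and using the canonical class $K_X=-2C_e+\pi^*(K_\Gamma+\mc L)\equiv -2C_e+(2g-2-e)f_e$. Write $C=aC_e+bf_e-\sum_{i=1}^r m_iE_i$. If $\pi_P(C)$ is a point then $C=E_i$ is an exceptional divisor and we are done, so assume $\pi_P(C)$ is an integral curve with class $aC_e+bf_e$; then all $m_i\geqslant 0$, $a\geqslant 0$, and (when $\pi_P(C)$ is neither $C_e$ nor a fiber) $b\geqslant ae$ by \cite[Chapter 5, Proposition 2.20(a)]{Ha}. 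Setting $C^2=-k$ and $K_{X_P}\cdot C=j$ and expanding $K_{X_P}=\pi_P^*K_X+\sum E_i$, the relation $K_{X_P}\cdot C=j$ becomes
\[
\sum_{i=1}^r m_i = 2b-a(e+2g-2)+j .
\]

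The auxiliary input is an effective divisor $D\equiv C_e+\lambda f_e$ on $X$ passing through all of $p_1,\dots,p_r$. Using $\mc O_X(C_e)\cong\mc O_X(1)$ and $\pi_*\mc O_X(C_e)=\mc O_\Gamma\oplus\mc L$, the projection formula gives $h^0(X,C_e+\lambda f_e)=h^0(\Gamma,\mc O_\Gamma(\lambda))+h^0(\Gamma,\mc L(\lambda))$. This is exactly where the three terms defining $\lambda$ enter: the conditions $\lambda\geqslant 2g-1$ and $\lambda\geqslant 2g-1+e$ force both $\mc O_\Gamma(\lambda)$ and $\mc L(\lambda)$ to have degree $\geqslant 2g-1$, hence to be non-special, so that $h^0(X,C_e+\lambda f_e)=2\lambda+2-2g-e$ by Riemann--Roch; then $\lambda\geqslant g+\lfloor\frac{r+e}{2}\rfloor$ yields $h^0=2\lambda+2-2g-e\geqslant r+1>r$. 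Since vanishing at the $r$ points is at most $r$ linear conditions, such a $D$ exists.

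For the main case $a>1$, the difference $D-\pi_P(C)\equiv (1-a)C_e+(\lambda-b)f_e$ has negative $C_e$-coefficient and so cannot be effective; as $\pi_P(C)$ is integral, $D$ and $\pi_P(C)$ share no component. Hence $D\cdot\pi_P(C)\geqslant\sum_{p\in P}(\mathrm{mult}_pD)(\mathrm{mult}_p\pi_P(C))\geqslant\sum_i m_i$, while a direct computation gives $D\cdot\pi_P(C)=b+a(\lambda-e)$. Combining this with the canonical relation and with $b\geqslant ae$ yields $-j\geqslant a(e+2-\lambda-2g)$, and the genus formula $j-k\geqslant -2$ gives
\[
C^2=-k\geqslant -2+a(e+2-\lambda-2g)=-2+(e+2-\lambda-2g)(C\cdot f_e),
\]
which is $\geqslant\min\{-2,-r\}+(e+2-\lambda-2g)(C\cdot f_e)$.

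It remains to treat $a\leqslant 1$. For $a=0$ the curve $\pi_P(C)$ is a fiber, $C\cdot f_e=0$, and $C^2\geqslant -r\geqslant\min\{-2,-r\}$. For $a=1$ with $\pi_P(C)\neq C_e$ one has $b\geqslant e$ and $m_i\in\{0,1\}$, whence $C^2=2b-e-\sum m_i^2\geqslant e-r$, which dominates the asserted bound because the slope $e+2-\lambda-2g\leqslant 3-4g<0$. I expect the genuinely delicate point to be the remaining subcase $C=\widetilde{C_e}$ (where $a=1$, $b=0$): here $C^2=-e-s$ with $s$ the number of blown-up points lying on $C_e$, and one must check directly that $-e-s\geqslant\min\{-2,-r\}+(e+2-\lambda-2g)$. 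This is the step where the interplay between $e$, $g$, and the floor term must be analyzed most carefully, and it is what dictates taking $\lambda$ to be the stated maximum; by contrast the $a>1$ argument is entirely parallel to the Hirzebruch case once the Riemann--Roch computation of $h^0(X,C_e+\lambda f_e)$ above is in hand.
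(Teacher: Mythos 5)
Your argument for the main cases coincides with the paper's: the same auxiliary divisor $D\equiv C_e+\lambda f_e$ through the $r$ points, the same intersection-theoretic bookkeeping for $a>1$ leading to $-j\geqslant a(e+2-\lambda-2g)$, and the same elementary estimates for $a=0$ and for $a=1$ with $b\geqslant ae$. (One small repairable difference: you compute $h^0(X,C_e+\lambda f_e)$ via $\pi_*\mc O_X(C_e)=\mc O_\Gamma\oplus\mc L$, which presupposes that $E$ is decomposable, whereas the theorem is stated for an arbitrary ruled surface $\mb P(E)$; the paper instead uses the normalization sequence $0\to\mc O_\Gamma\to E\to L\to 0$, which yields the same count $2(\lambda+1-g)-e$ once both twists are non-special.)

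The genuine problem is the subcase you explicitly leave open, $C=\widetilde{C_e}$. You say one must "check directly" that $-e-s\geqslant \min\{-2,-r\}+(e+2-\lambda-2g)$ and that this is what dictates the choice of $\lambda$ --- but this inequality is false in general, so the step cannot be completed. Take $g=1$, $r=0$ and $e\geqslant 4$: then $C=C_e$ is an integral curve with $C^2=-e$ and $C\cdot f_e=1$, one has $\lambda=\max\{1,\,1+e,\,1+\lfloor e/2\rfloor\}=1+e$, and the asserted bound reads $C^2\geqslant -2+(e+2-(1+e)-2)=-3$, which fails. No choice of $\lambda$ of the given shape can save this: as $e\to\infty$ the right-hand side stays bounded while $\widetilde{C_e}^2=-e-s\to-\infty$. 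So your instinct that this is the delicate point is correct, but the case is not merely delicate --- it is a counterexample to the statement as literally written. For what it is worth, the paper's own proof has exactly the same lacuna: in its case $a=1$ it invokes \cite[Chapter 5, Propositions 2.20 and 2.21]{Ha} to conclude $b\geqslant e$ (resp.\ $b\geqslant 0$), but those propositions apply only to irreducible curves different from $C_e$ and the fibers, so $\widetilde{C_e}$ (with $a=1$, $b=0$) is silently excluded. The theorem should be read, or restated, with $\widetilde{C_e}$ excepted alongside the exceptional divisors; for that curve one has instead the elementary bound $\widetilde{C_e}^{\,2}\geqslant -e-r$.
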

\begin{proof}
	The proof is similar to the proof of the previous Theorem, and so we will omit
	some details. 
	
	Recall that the Picard group of $X_P$ is generated by the Picard group
	of $\Gamma$, the divisor $C_e$, and the exceptional divisors $E_i$. 
	Let $C=aC_e+\pi^*B-\sum_{i=1}^rm_iE_i$ be an integral curve on $X_{P}$ 
	with $C^2<0$. If $\pi_P(C)$ is a point, then $C$ is forced to be one 
	of the exceptional divisors $E_i$ and so $C^2=-1$. 
	Let us assume that $\pi_P(C)$ is a curve.
	Then $C$ is the strict transform of $\pi_P(C)$, and so we get all 
	the $m_i\geqslant 0$. 
	Let $C^2=-k$ and $K_{X_P}\cdot C=j$. Using  $K_{X_P}\cdot C=j$ we get
	\begin{equation}\label{eqn(-2)}
		2a+2b-ae-2ag+j=\sum_{i=1}^rm_i.
	\end{equation}
	
	Note that $h^0(X, C_e+\pi^*B')=h^0(\Gamma,E\otimes\mc O_\Gamma(B'))$. 
	From the short exact sequence 
	$$0\to \mc O_\Gamma\to E\to L\to 0\,,$$
	it follows that if ${\rm deg}(\mc O_\Gamma(B'))>2g-2$ and 
	${\rm deg}(L\otimes \mc O_\Gamma(B'))>2g-2$ then 
	$$h^0(X,C_e+\pi^*B')=h^0(\Gamma, E\otimes \mc O_\Gamma(B'))=
	2({\rm deg}(B') + 1-g)-e\,.$$
	Letting 
	$$\lambda:={\rm max}\left\{2g-1,2g-1+e,g+\left\lfloor \frac{r+e}{2}\right\rfloor\right\}$$ 
	we see 
	that $2(\lambda + 1-g)-e>r$. Thus, if $B'$ is a divisor of degree $\lambda$ 
	then there exists an effective divisor 
	$D\in |C_e+\pi^*B'|$ such that $D$ 
	passes through all points in $P$. As $\pi_P(C)$ is integral,  $D$ and 
	$\pi_P(C)$ can have a common component 
	only if $\pi_P(C)$ is a component of $D$. 
	
	First consider the case when $a>1$. Write the numerical class of $\pi_P(C)$ 
	as $aC_e+bf_e$, and the numerical class of $D$ as $C_e+\lambda f_e$. Hence, 
	$D-\pi_P(C)\equiv (1-a)C_e+(\lambda-b)f_e$. Since $a>1$, we see that 
	$D-\pi_P(C)$ cannot be effective (as intersecting with $f_e$, a nef class, 
	will give a negative number). So $D$ 
	and $\pi_P(C)$ do not have a common component.  Hence we have 
	$$D\cdot \pi_P(C)\geqslant \sum_{p\in P}(\text{mult}_pD)(\text{mult}_p\pi_P(C)).$$
	Since $\text{mult}_pD\geqslant 1$, we have, 
	$$b+a(\lambda -e)\geqslant \sum_{i=1}^rm_i\,.$$
	Combining this and \eqref{eqn(-2)}, we have 
	$$2a+2b-ae-2ag+j=\sum_{i=1}^rm_i \leqslant b+a(\lambda -e)\,,$$
	that is, $b-a(\lambda -2+2g)+j\leqslant 0$. 
	As $a>1$, using \cite[Chapter 5, 
	Proposition 2.20, Proposition 2.21]{Ha}
	we get $b\geqslant ae$. Thus, we have $-j\geqslant a(e+2-\lambda -2g)$.
	
	Now by the genus formula we have $$j-k=K_{X_P}\cdot C+C^2\geqslant -2\,,$$
	which yields 
	$$C^2=-k\geqslant -2+a(e+2-\lambda-2g)= -2+(e+2-\lambda-2g)(C \cdot f_e)\,.$$

	Next consider the case $a=1$. Then $C^2=2b-e-\sum_{i=1}^rm_i^2$ and 
	$0\leqslant m_i\leqslant 1$ for 
	all $1\leqslant i \leqslant r$. 
	Again, using \cite[Chapter 5, 
	Proposition 2.20, Proposition 2.21]{Ha}
	we see that if $e\geqslant 0$ then $b\geqslant e$
	and if $e<0$ then $b\geqslant 0$. In both cases we have 
	$2b-e\geqslant 0$. Thus, 
	$$C^2\geqslant (2b-e)-r\geqslant -r\,.$$
	Finally, in the case $a=0$, we easily see that $C^2\geqslant -r$.
	This completes the proof of the Theorem. 
\end{proof}


\newcommand{\etalchar}[1]{$^{#1}$}

\end{document}